\title[Branching laws and Hermitian symmetric spaces]
{Symplectic branching laws and Hermitian symmetric spaces}
\author[Benjamin Schwarz and Henrik Sepp\"anen]{Benjamin Schwarz and Henrik Sepp\"anen*} 
\thanks{*supported by the DFG priority program 1388 "Representation 
Theory"}
\keywords{branching law, holomorphic line bundle, Hermitian symmetric space, 
moment map, Jordan pair, Okounkov body}
\date{\today}
\address{Benjamin Schwarz,Universit\"{a}t Paderborn,
Fakult\"{a}t f\"{u}r Elektrotechnik, Informatik und Mathematik,
Institut f\"{u}r Mathematik, Warburger Str. 100,
33098 Paderborn}
\email{bschwarz@math.upb.de}
\address{Henrik Sepp\"{a}nen,Universit\"{a}t Paderborn,
Fakult\"{a}t f\"{u}r Elektrotechnik, Informatik und Mathematik,
Institut f\"{u}r Mathematik, Warburger Str. 100,
33098 Paderborn}
\email{henriksp@math.upb.de}
\newcommand{\C}{\mathbb{C}}
\newcommand{\R}{\mathbb{R}}
\newcommand{\N}{\mathbb{N}}
\newcommand{\Z}{\mathbb{Z}}
\newcommand{\g}{\mathfrak{g}}
\newcommand{\fu}{\mathfrak{u}}
\newcommand{\fk}{\mathfrak{k}}
\newcommand{\ft}{\mathfrak{t}}
\newcommand{\fh}{\mathfrak{h}}
\newcommand{\fp}{\mathfrak{p}}
\newcommand{\fb}{\mathfrak{b}}
\newcommand{\fl}{\mathfrak{l}}
\newcommand{\fn}{\mathfrak{n}}
\newcommand{\fq}{\mathfrak{q}}
\newcommand{\linebundle}{\mathscr{L}}
\newcommand{\CC}{\mathbb{C}}
\newcommand{\RR}{\mathbb{R}}
\newcommand{\NN}{\mathbb{N}}
\DeclareMathOperator{\Aut}{Aut}
\DeclareMathOperator{\End}{End}
\DeclareMathOperator{\GL}{GL}
\DeclareMathOperator{\Hom}{Hom}
\DeclareMathOperator{\Id}{Id}
\DeclareMathOperator{\Tr}{Tr}
\DeclareMathOperator{\rank}{rk}
\DeclareMathOperator{\Gr}{Gr}
\DeclareMathOperator{\Ad}{Ad}
\newcommand{\mf}[1]{\mathfrak{#1}}
\newcommand{\JTP}[3]{\left\{#1,\,#2,\,#3\right\}}		
\newcommand{\B}[2]{B_{#1,\,#2}}											
\newcommand{\trans}[1]{t_{#1}}											
\newcommand{\qtrans}[1]{\tilde t_{#1}}							
\newcommand{\Set}[2]{\left\{#1\,\middle|\,#2\right\}} 
\newcommand{\set}[2]{\{#1\,|\,#2\}}								  
\newcommand{\Tri}{S}																
\newcommand{\Peirce}{\mathcal P}										
\newcommand{\Idem}{\mathcal I}											
\newcommand{\GP}[2]{{\left[#1:#2\right]}}						
\renewcommand{\b}[1]{{\bf #1}}
\newcommand{\sH}{\mathcal H}												
\DeclareMathOperator{\proj}{pr}											
\newcommand{\half}{{\nicefrac{1}{2}}}								
\newcommand{\JP}[1]{\text{\sffamily JP#1}}					
\newcommand{\op}{{\textup{op}}}
\newtheorem{prop}{Proposition}[section]
\newtheorem{lemma}[prop]{Lemma}
\newtheorem{cor}[prop]{Corollary}
\newtheorem{thm}[prop]{Theorem}
\theoremstyle{definition}
\newtheorem{rem}[prop]{Remark}
\begin{document}

\begin{abstract}
Let $G$ be a complex simple Lie group, and let $U \subseteq G$ be 
a maximal compact subgroup. Assume that $G$ admits a homogenous space
$X=G/Q=U/K$ which is a compact Hermitian symmetric space. 
Let $\mathscr{L} \rightarrow X$ be the ample line bundle which generates
the Picard group of $X$. In this paper we study the restrictions 
to $K$ of the family $(H^0(X, \mathscr{L}^k))_{k \in \N}$ of irreducible 
$G$-representations. We describe explicitly the moment polytopes for 
the moment maps $X \rightarrow \fk^*$ associated to positive
integer multiples of the Kostant-Kirillov symplectic form on $X$, 
and we use these, together with an explicit 
characterization of the closed $K^\C$-orbits 
on $X$, to find the decompositions of the spaces
$H^0(X,\mathscr{L}^k)$. 
We also construct a natural Okounkov body for $\mathscr{L}$ and 
the $K$-action, and identify it with the smallest of 
the moment polytopes above. 
In particular, the Okounkov body is a convex polytope. In fact, 
we even prove the stronger property that the semigroup defining 
the Okounkov body is finitely generated.  
\end{abstract}

\maketitle

\section{Introduction}

In this paper we consider the following setting. Let $G$ be a 
complex simple Lie group, and assume that $G$ admits a quotient 
$X:=G/Q$ which is a compact Hermitian symmetric space. Then 
$Q$ is a maximal parabolic subgroup. Moreover, we can 
write $X$ as $X=U/K$, where $U \subseteq G$ is a maximal compact 
subgroup, and $K:=U \cap Q$. The Picard group
of $X$, which is isomorphic to the group of 
holomorphic characters $Q \rightarrow \C^\times$, is 
$\Z$. Let $\mathscr{L} \rightarrow X$ be the ample 
generator for the Picard group. 
We are concerned with the decomposition under $K$ of the
irreducible $G$-representations given by the family 
$H^0(X, \mathscr{L}^k)$, where $k \in \N$. 

In order to put our approach to the decomposition 
problem into its proper framework we make 
a small digression into a more general setting. For 
a more thorough treatment we refer to \cite{S95} 
and the references therein.
We now temporarily let $K$ denote an arbitrary
compact Lie group, and assume that $K$ acts 
holomorphically and in a Hamiltonian fashion 
on the connected compact K\"ahler manifold 
$(M, \omega)$ with moment map
$\tau \rightarrow \fk^*$. Assume that $(M, \omega)$ 
admits a prequantum line bundle $\mathcal{L}$. 
The action of $K$ then lifts to an action on 
$\mathcal{L}$, and hence we have 
a representation of $K$ on the space 
$H^0(M, \mathcal{L})$. One may then 
ask how $H^0(M, \mathcal{L})$ decomposes 
under $K$. For this purpose it is useful 
to realize the irreducible $K$-representation 
of highest weight $\xi$ as the space 
of holomorphic sections 
$H^0(\mathcal{O}^K_\xi, \mathscr{L}_\xi)$, where
$\mathcal{O}^K_\xi \subseteq \fk^*$ is the 
coadjoint orbit through $\xi$, and 
$\mathscr{L}_\xi$ is the prequantum line bundle 
attached to the Kostant-Kirillov symplectic form 
on $\mathcal{O}^K_\xi$. 
It is well-known that the multiplicity 
of the representation $H^0(\mathcal{O}^K_\xi, \mathscr{L}_\xi)$
in $H^0(M, \mathcal{L})$ is given by 
the dimension of the space, 
$H^0(M \times \overline{\mathcal{O}^K_\xi}, 
\mathcal{L} \boxtimes \overline{\mathscr{L}_\xi^*})^K$, 
of $K$-invariant holomorphic sections of 
the line bundle 
$\mathcal{L} \boxtimes \overline{\mathscr{L}_\xi^*}
\rightarrow M \times \overline{\mathcal{O}^K_\xi}$. 
Here $\overline{\mathcal{O}_\xi^K}$ denotes 
the topological space $\mathcal{O}_\xi^K$ equipped
with the reverse complex structure and the symplectic 
form given by $-1$ times the Kostant-Kirillov 
form. The fibre over $x \in \mathcal{O}^K_\xi$ of the line bundle 
$\overline{\mathscr{L}_\xi^*}$ 
consists of the space of antilinear complex-valued
functionals on $(\mathscr{L}_\xi)_x$ (cf. \cite{GS82}).
The group $K$ now acts holomorphically and in a Hamiltonian 
fashion on $M \times \overline{\mathcal{O}^K_\xi}$ with 
moment map 
$\tau^\xi:M \times \overline{\mathcal{O}^K_\xi} \rightarrow \fk^*$
given by $\tau^\xi(m,f):=\tau(m)-f$. 

An obvious question is whether the space 
$H^0(M \times \overline{\mathcal{O}^K_\xi}, 
\mathcal{L} \boxtimes \overline{\mathscr{L}_\xi^*})^K$
can be interpreted as the space of all holomorphic sections
of some line bundle over some ``quotient'' of 
$M \times \overline{\mathcal{O}^K_\xi}$ by $K$. Indeed, this 
holds for the Mumford quotient 
$$(M \times \overline{\mathcal{O}^K_\xi})_0:=
(M \times \overline{\mathcal{O}^K_\xi})_{ss}//K^\C,$$
where 
$(M \times \overline{\mathcal{O}^K_\xi})_{ss}$ is the open 
subset consisting of the \emph{semistable points} of 
$(M \times \overline{\mathcal{O}^K_\xi})$. 
An interesting feature, and one which links the Mumford quotient 
to the symplectic geometry, is that  
$(M \times \overline{\mathcal{O}^K_\xi})_0$ is
homeomorphic to the topological quotient $(\tau^\xi)^{-1}(0)$
(cf. \cite[Thm. 2.5]{S95}), the \emph{symplectic reduction} 
at $0$.
Moreover, the following result holds.
\begin{thm}\label{T: sjamaar}(\cite[Corollary 1]{S95}) 
If $\xi$ does not lie in the image $\tau(M)$, then 
the irreducible representation of highest weight 
$\xi$ does not occur in $H^0(M, \mathcal{L})$.
\end{thm}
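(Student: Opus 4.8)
The plan is to read off the statement from the multiplicity formula together with the identification of invariant sections with sections of a line bundle over the Mumford quotient, both recalled above. By that formula, the irreducible representation of highest weight $\xi$ occurs in $H^0(M, \mathcal{L})$ if and only if its multiplicity
$$\dim H^0\bigl(M \times \overline{\mathcal{O}^K_\xi},\, \mathcal{L} \boxtimes \overline{\mathscr{L}_\xi^*}\bigr)^K$$
is nonzero. It therefore suffices to show that this space of $K$-invariant holomorphic sections vanishes whenever $\xi \notin \tau(M)$.

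First I would analyze the zero fibre of the moment map $\tau^\xi(m,f) = \tau(m) - f$ on $M \times \overline{\mathcal{O}^K_\xi}$. Since $f$ ranges over $\overline{\mathcal{O}^K_\xi}$, which as a subset of $\fk^*$ is exactly the coadjoint orbit $\mathcal{O}^K_\xi$ through $\xi$, a pair $(m,f)$ lies in $(\tau^\xi)^{-1}(0)$ precisely when $\tau(m) = f \in \mathcal{O}^K_\xi$. Hence $(\tau^\xi)^{-1}(0) \neq \emptyset$ if and only if $\tau(M) \cap \mathcal{O}^K_\xi \neq \emptyset$. Next I would invoke the equivariance of $\tau$, namely $\tau(k\cdot m) = \Ad^*(k)\,\tau(m)$, which shows that the image $\tau(M)$ is stable under the coadjoint action and is thus a union of coadjoint orbits. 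Consequently $\tau(M) \cap \mathcal{O}^K_\xi \neq \emptyset$ forces $\mathcal{O}^K_\xi \subseteq \tau(M)$, and in particular $\xi \in \tau(M)$. Taking the contrapositive: if $\xi \notin \tau(M)$, then $\tau(M) \cap \mathcal{O}^K_\xi = \emptyset$, so $(\tau^\xi)^{-1}(0) = \emptyset$.

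Finally I would use the homeomorphism between the Mumford quotient $(M \times \overline{\mathcal{O}^K_\xi})_0$ and the symplectic reduction $(\tau^\xi)^{-1}(0)/K$ at $0$. An empty zero fibre yields an empty reduction and hence an empty GIT quotient; since the invariant sections are identified with holomorphic sections of a line bundle over this quotient, and the only section of a bundle over the empty space is the zero section, the space $H^0(M \times \overline{\mathcal{O}^K_\xi}, \mathcal{L} \boxtimes \overline{\mathscr{L}_\xi^*})^K$ is trivial. Therefore the multiplicity is zero, and the representation of highest weight $\xi$ does not occur in $H^0(M,\mathcal{L})$.

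I expect the genuinely substantial input to be supplied entirely by the cited results of \cite{S95}: the identification of the $K$-invariant sections with sections over the Mumford quotient, and the homeomorphism of that quotient with the symplectic reduction at $0$. Granting these, the argument is a short chain of implications, with the equivariance of $\tau$ (forcing $\tau(M)$ to be a union of coadjoint orbits) doing the essential geometric work that converts the hypothesis ``$\xi \notin \tau(M)$'' into the emptiness of $(\tau^\xi)^{-1}(0)$.
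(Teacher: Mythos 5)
The paper offers no proof of Theorem \ref{T: sjamaar}: it is imported verbatim from \cite[Corollary 1]{S95}, and all of the surrounding machinery in the introduction (the multiplicity formula, the Mumford quotient, the homeomorphism with the symplectic reduction) is likewise quoted there without proof. So there is no internal argument to compare yours against; what you have done is reconstruct the cited corollary from the other cited results, which is legitimate and essentially Sjamaar's own deduction. Your first two steps are correct and carry the real geometric content: the multiplicity of the $\xi$-isotypic component is $\dim H^0(M \times \overline{\mathcal{O}^K_\xi}, \mathcal{L} \boxtimes \overline{\mathscr{L}_\xi^*})^K$, and equivariance of $\tau$ makes $\tau(M)$ a union of coadjoint orbits, so $\xi \notin \tau(M)$ forces $(\tau^\xi)^{-1}(0) = \emptyset$.

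The step that overreaches is the last one. The identification of $K$-invariant sections with sections of a line bundle over the Mumford quotient is available (both in the paper's introduction and in \cite{S95}) only for sufficiently large tensor powers $q$ of $\mathcal{L} \boxtimes \overline{\mathscr{L}_\xi^*}$; the bundle need not descend to the quotient for $q=1$, so as written you only conclude that invariant sections of large powers vanish. This is easily repaired: if $s$ were a nonzero invariant section of the first power, then $s^{\otimes q}$ would be a nonzero invariant section of the $q$-th power, contradicting what you proved. Cleaner still, and closer to what the paper itself does in Section 7 (proof of Proposition \ref{P: multfree}), is to bypass the quotient entirely and invoke the equality of the algebraic and analytic semistable sets from \cite[Thm. 2.18]{S95}: an empty zero fibre makes the analytic semistable set empty, hence the algebraic one as well, and then by the very definition of algebraic semistability no invariant section of \emph{any} power --- including the first --- can be nonzero at any point. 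With either repair your argument is complete.
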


The space $(M \times \overline{\mathcal{O}^K_\xi})_0$ also
carries the structure of a complex projective variety. In fact, 
it is isomorphic to 
$\mbox{Proj}(\bigoplus_{k=0}^{\infty} 
H^0(M \times \overline{\mathcal{O}^K_\xi}, 
(\mathcal{L} \boxtimes \overline{\mathscr{L}_\xi^*})^k)^K)$. 
Moreover, for big enough $q$, the line bundle 
$\mathcal{L}^q \boxtimes (\overline{\mathscr{L}_\xi^*})^q$ 
induces a line bundle
$(\mathcal{L}^q \boxtimes (\overline{\mathscr{L}_\xi^*})^q)_0$ 
over $(M \times \overline{\mathcal{O}^K_\xi})_{ss}//K^\C$, 
the total space of which is 
$((\mathcal{L} \boxtimes \overline{\mathscr{L}_\xi^*})^q)
\mid_{(M \times \overline{\mathcal{O}^K_\xi})_{ss}})/K^\C$. 
For such $q$, there is an isomorphism 
\begin{equation*}
H^0(M \times \overline{\mathcal{O}^K_\xi}, 
(\mathcal{L} \boxtimes \overline{\mathscr{L}_\xi^*})^q)^K
\cong 
H^0((M \times \overline{\mathcal{O}^K_\xi})_0,
(\mathcal{L}^q \boxtimes (\overline{\mathscr{L}_\xi^*})^q)_0).
\end{equation*}
Under favourable conditions, \emph{e.g.}\ the vanishing 
of all cohomology groups
$H^i((M \times \overline{\mathcal{O}^K_\xi})_0,
(\mathcal{L}^q \boxtimes (\overline{\mathscr{L}_\xi^*})^q)_0))$, 
for $i >0$, the asymptotics of 
$\mbox{dim} H^0((M \times \overline{\mathcal{O}^K_\xi})_0,
(\mathcal{L}^{kq} \boxtimes (\overline{\mathscr{L}_\xi^*})^{kq})_0)$
as $k \rightarrow \infty$ is given by the Riemann-Roch theorem
for (singular) complete schemes (cf. \cite[Example 18.3.6]{Fu98}).

Although this machinery works well in principle, a major 
obstruction for applying it in particular cases in order 
to obtain explicit asymptotic expressions for multiplicities
is that the moment maps $\tau^\xi$, and hence their fibres, 
are in general notoriously hard to compute. 

In this paper we are able to compute the moment 
map $\mu_\fk: X \rightarrow \fk^*$ for the $K$-action
by using an explicit Jordan-theoretic
description of $X$. We also prove that 
for any $\nu \in \mu_\fk(X)$, the stabilizer,
$K_\nu$, of $\nu$ acts transitively on the fibre 
$\mu_\fk^{-1}(\nu)$. As a consequence, the 
decomposition of $H^0(X, \mathscr{L}^k)$ under 
$K$ is multiplicity free for every $k \in \N$. 
We also describe explicitly the moment polytope for 
$k\mu_\fk$ for any $k \in \N$, i.e., the intersection 
of $k\mu_\fk(X)$ with a closed Weyl chamber, as well 
as the integral points in the moment polytope.
By Theorem \ref{T: sjamaar}, these are the only weights 
that can occur as highest weights of irreducible 
$K$-representations in $H^0(X, \mathscr{L}^k)$. 
We prove that all these integral points in fact 
do occur. In fact, from the particular form of the integral 
weights in the moment polytopes for the $k\mu_\fk$ it 
turns out that it suffices to prove this for $k=1$, i.e., 
that all the integral points in the moment 
polytope for $\mu_\fk$ parameterize 
irreducible $K$-representations in $H^0(X, \mathscr{L})$. 

In the special case $k=1$ we prove that the 
integral points stand in a one-to-one correspondence
with the closed $K^\C$-orbits, $X_0,\,\ldots,\,X_r$,
in $X$. The number $r$ is the \emph{rank} of $X$ as a symmetric
space. We also give a Jordan-theoretic 
characterization of these orbits. 
Using this characterization, we give a geometric 
decomposition of $H^0(X, \mathscr{L})$ under $K$. The 
$K$-equivariant embedding $W_i \rightarrow H^0(X, \mathscr{L})$ 
of the irreducible representation $W_i$ corresponding to 
the orbit $X_i$ is a section for the restriction map 
$H^0(X, \mathscr{L}) \rightarrow H^0(X_i, \mathscr{L}\mid_{X_i})$.

We also define an Okounkov body for the line bundle 
$\mathscr{L}$ and the $K$-action (cf. \cite{Ok}) by using 
a canonical local trivialization of sections. 
The semigroup defining the Okounkov body 
describes the initial monomial terms  
of the polynomials that are local trivializations
of highest weight vectors for irreducible 
$K$-subrepresentations. Using the decompositions 
for the spaces $H^0(X, \mathscr{L}^k)$ under $K$ 
we are able to prove that this semigroup is 
finitely generated.

The paper is organized as follows. In Section 2 and Section 3 
we recall the preliminaries from Lie theory and Jordan theory, 
respectively. In Section 4 we study the local trivializations
of holomorphic sections and use these to define an Okounkov 
body for $\mathscr{L}$ and the $K$-action. 
Section 5 is devoted to closed $K^\C$-orbits in $X$ 
and a geometric decomposition theorem. In Section 6 we 
describe the moment polytope and the symplectic reductions. 
In Section 7 we prove the decomposition theorem 
for $H^0(X, \mathscr{L}^k)$ using the results from 
previous sections. In Section 8 use the results from 
Section 7 to identify the Okounkov body with the 
moment polytope for $\mu_\fk$.

{\bf Acknowledgement:} We would like to thank Joachim 
Hilgert and Harald Upmeier for stimulating discussions on 
these topics.

\section{Preliminaries from Lie theory}\label{sec:PerlimLieTheo}
Let $\g$ be a complex simple Lie algebra, 
and let $\mathfrak{h}$ be a fixed Cartan 
subalgebra of $\g$. Let $\Phi=\Phi(\g,\mathfrak{h})$ 
be the set of roots with respect to $\mathfrak{h}$. 
Let $\Phi^+ \subseteq \Phi$ be a positive system, and 
$\{\beta_1,\ldots, \beta_s\}$ the corresponding simple roots. 
For every $\alpha \in \Phi^+$, fix an 
$\mathfrak{sl}_2$-triple $\{E_\alpha, H_\alpha, E_{-\alpha}\}$ 
with $H_\alpha \in \mathfrak{h}$, $E_\alpha \in \g_\alpha$, 
and $E_\alpha \in \g_{-\alpha}$, normalized so that the 
identitiy
\begin{equation*}
[E_\alpha, E_\beta]=N_{\alpha, \beta}E_{\alpha+\beta}
\end{equation*}
holds for all $\alpha, \beta \in \Phi$, 
with constants $N_{\alpha, \beta} \in \R$ satisfying
$N_{-\alpha, -\beta}=-N_{\alpha, \beta}$. 
Also, set $F_\alpha:=E_{-\alpha}$ for $\alpha \in \Phi^+$. 
Let 
\begin{equation*}
\fu:=\bigoplus_{j=1}^s iH_{\beta_j}
\oplus \bigoplus_{\alpha \in \Phi^+} \R(E_\alpha-F_\alpha) 
\oplus \bigoplus_{\alpha \in \Phi^+} \R i(E_\alpha+F_\alpha)
\end{equation*}
be the canonical compact real form of $\g$, and 
$\theta: \g \rightarrow \g$
be the associated Cartan involution of $\g$.
Let 
\begin{equation*}
\ft:=\bigoplus_{j=1}^s iH_{\beta_j} \subseteq \fu
\end{equation*} 
be the maximal abelian subalgebra 
of $\fu$ with $\fh^\theta=\ft$. Define
\begin{align*}
\mathfrak{n}^+:=\bigoplus_{\alpha \in \Phi^+ } \g_\alpha\,,\quad
\mathfrak{n}^-:=\bigoplus_{\alpha \in -\Phi^+ } \g_\alpha\,,
\end{align*} 
and let 
\begin{equation*}
\mathfrak{b}:=\fh \oplus \fn^+
\end{equation*} 
be the Borel subalgebra defined by the positive system $\Phi^+$.
Then $\g$ can be decomposed as
\begin{equation*}
\g=\mathfrak{b} \oplus \mathfrak{n}^-.
\end{equation*}
For a root, $\alpha$, let $m_{\beta_i}(\alpha)$
be the multiplicity of $\beta_i$ in $\alpha$, i.e., 
$\alpha=\sum_{i=1}^s m_{\beta_i}(\alpha)\beta_i$.

In this paper we shall be concerned with the 
special case when $\g$ admits a simple root that 
has multiplicity at most one in every positive root. 
Assume therefore that $\beta_1$ is such a simple root.
We define subsets of $\Phi$ by
\begin{equation*}
\Phi_Q:=\{\alpha \in \Phi \mid m_{\beta_1}(\alpha) \geq 0\},
\quad \Phi_L:=\{\alpha \in \Phi \mid m_{\beta_1}(\alpha)=0\},  
\end{equation*}
and Lie subalgebras
\begin{align*}
\fl := \fh \oplus \bigoplus_{\alpha \in \Phi_L} \g_\alpha\,,\quad
\fq := \fh \oplus \bigoplus_{\alpha \in \Phi_Q} \g_\alpha\,.
\end{align*}
We also define 
\begin{align*}
\fp^+:=\bigoplus_{\alpha \in \Phi_Q \setminus \Phi_L} \g_\alpha\,,\quad
\fp^-:=\bigoplus_{\alpha \in \Phi_Q \setminus \Phi_L} \g_{-\alpha}\,.
\end{align*}
Then we have
\begin{equation*}
\fq=\fl \oplus \fp^+.
\end{equation*}
The Lie algebra $\fq$ is a maximal parabolic subalgebra of $\g$ 
containing $\fb$, and $\fp^+$ and $\fl$ are the nilpotent radical 
of $\fq$ and the Levi subalgebra of $\fq$, respectively.
From the assumption that 
$m_{\beta_1}(\alpha) \in \{-1,0,1\}$ for any root $\alpha$ it
readily follows that $\fp^+$ is an abelian subalgebra. 

The Lie algebra $\fl$ is reductive with semisimple 
part $$\fl':=[\fl, \fl]=\fh_L \oplus 
\bigoplus_{\alpha \in \Phi_L} \g_\alpha,$$ 
where $$\fh_L:=H_{\beta_2} \oplus \cdots \oplus H_{\beta_s}$$
is a Cartan subalgebra of $\fl'$. Then 
$\Phi_L$ is the set of roots of $\fl$ with respect 
to $\fh_L$. The set $\Phi_L^+:=\Phi_L \cap \Phi^+$ is a positive
system in $\Phi_L$, and we accordingly 
define the subalgebras
\begin{equation*}
\fn_L^+:=\bigoplus_{\alpha \in \Phi_L^+} \fl_\alpha\,,\quad
\fn_L^-:=\bigoplus_{\alpha \in -\Phi_L^+} \fl_\alpha
\end{equation*}
of $\fl$. The subalgebra $\fl$ is obviously $\theta$-invariant, 
and we set 
\begin{equation*}
\fk:=\fl^\theta=\fl \cap \fu.
\end{equation*}
Let $\zeta_0 \in \fk$ be a basis vector for 
$\mathfrak{z(\fl)}$, the centre of $\fl$. 
Then $$\fl=\fl' \oplus \C \zeta_0.$$

We now return to the roots in $\Phi^+$ and construct 
a particular numbering of them. Equip the root lattice 
with the lexicographic order coming from the 
identification with the lattice 
$\Z \beta_1 \oplus \cdots \oplus \Z \beta_s$.
Then $\beta_1 >\ldots > \beta_s$. Moreover, 
$\alpha > \gamma$, for any 
$\alpha \in \Phi_Q\setminus\Phi_L$ 
and $\gamma \in \Phi_L$.

We construct a maximal
set $\{\gamma_1,\ldots, \gamma_r\} \subseteq \Phi_Q \setminus \Phi_L$ 
of \emph{strongly orthogonal roots}, i.e., the $\gamma_i$ satisfy
the property that for all pairs $\{\gamma_i,\gamma_j\}$ is 
neither $\gamma_i+\gamma_j$, nor $\gamma_i-\gamma_j$ a root.

\noindent First, put $\gamma_1:=\beta_1$. Assuming that 
$\gamma_1,\ldots, \gamma_i$ are defined, let 
$\gamma_{i+1}$ be the smallest root in 
$\Phi_Q \setminus \Phi_L$ such that 
$\gamma_j \pm \gamma_{i+1} \notin \Phi$ for $j=1,\ldots, i$.

Given the roots $\gamma_1, \ldots, \gamma_r$, we 
now consider a particular decomposition of the 
the maximal abelian subalgebra $\ft \subseteq \fk$. 
Let $$\kappa: \g \times \g \rightarrow \C$$ 
be the Killing form of $\g$. 
Put $\mathfrak{s}':=\R iH_{\gamma_1} \oplus \cdots
\oplus \R iH_{\gamma_r} \subseteq \ft$, and 
consider the decomposition 
$\ft=\mathfrak{s}' \oplus (\mathfrak{s}')^{\perp}$, 
where $(\mathfrak{s}')^\perp \subseteq \ft$ is the 
orthogonal complement to $\mathfrak{s}'$ with 
respect to the restriction of $\kappa$ to $\ft$. 
Notice that
\begin{equation*}
(\mathfrak{s}')^{\perp}=
\{H \in \ft \mid \gamma_i(H)=0, \quad i=1,\ldots, r\}.
\end{equation*}

With respect to this decomposition of $\ft$, let
$\zeta \in \ft$ denote the $(\mathfrak{s}')^\perp$-
component of $\zeta_0$. 
Now put $\mathfrak{s}:=\mathfrak{s}' \oplus \R \zeta$, 
and consider the decomposition 
\begin{equation}\label{eq:MaxAbDecomp}
\ft=\mathfrak{s} \oplus (\mathfrak{s})^{\perp},
\end{equation}
where $(\mathfrak{s})^\perp \subseteq \ft$ is the 
orthogonal complement to $\mathfrak{s}$ with 
respect to the restriction of $\kappa$ to $\ft$.

We now enumerate the roots in 
$\Phi_Q \setminus \Phi_L$ as follows. 
First, let $\alpha_1:=\gamma_1,\ldots, \alpha_r:=\gamma_r$. 
Then, let $\alpha_{r+1},\ldots, \alpha_n$ be the remaining 
roots in $\Phi_Q \setminus \Phi_L$, numbered in such a 
way that 
\begin{equation*}
\alpha_i < \alpha_j  \quad \mbox{for all} \,\, r+1 \leq i<j \leq n
\label{E: rootorder}
\end{equation*}
(with respect to the 
lexicographic order above). 
The vectors 
$F_1:=F_{\alpha_1},\ldots, F_n:=F_{\alpha_n}$ form a basis for $\fp^-$.

We now shift focus to the level of Lie groups.
Let $G$ be a 1-connected complex Lie group with Lie algebra
$\g$. Define the subgroups $B:=N_G(\mathfrak{b})$, $Q:=N_G(\fq)$, 
$L:=N_G(\fp^-) \cap N_G(\fl) \cap N_G(\fp^+)$ with 
Lie algebras $\mathfrak{b}, \fq$, and $\fl$, respectively.
Let $P^-:=\langle \exp \fp^- \rangle$ be the integral subgroup 
of $G$ generated by $\fp^-$.
Moreover, we put $B_L:=N_L(\fh \oplus \fn_L^+) \subseteq L$. 
Then $B_L$ is a Borel subgroup of $L$. Also, let
$N_L^+$ and $N_L^-$ be the integral subgroups of 
$L$ with Lie algebras $\fn_L^+$ and $\fn_L^-$, 
respectively.

We will be concerned with the homogeneous space $X:=G/Q$.
From the point of view of symplectic geometry it is 
convenient to describe $X$ as a homogeneous space under a 
compact Lie group. For this purpose, let
$U:=\exp \langle \fu \rangle \subseteq G$ be the integral
subgroup of $G$ generated by $\fu$.
Then, since $G/Q$ is connected, we have $$X=G/Q=U/K,$$ where 
$$K:=Q \cap U.$$ 
Notice that $K$ has Lie algebra $\fk$.

We will now take a closer look at a particular 
choice of local coordinates for $X$. The multiplication map
\begin{equation*}
P^- \times Q \rightarrow G, \quad (p,q) \mapsto pq
\end{equation*}
is holomorphic and injective with open image.
Moreover the exponential map
\begin{equation*}
\exp: \fp^- \rightarrow P^-
\end{equation*}
is a biholomorphic isomorphism.
It follows that the map
\begin{equation*}
\fp^- \rightarrow P^-Q/Q \subseteq X, \quad 
z_1F_1+\cdots+z_nF_n  \mapsto \exp(z_1F_1+\cdots +z_nF_n)Q
\label{E: eta}\\
\end{equation*}
is an injective holomorphic map with open image.

\section{Preliminaries from Jordan theory}
Recall that there is a one-to-one correspondence between Hermitian symmetric spaces of compact type and semisimple complex Jordan pairs with positive Hermitian involution. In the following we indicate how to obtain a Jordan pair from the Lie algebra $\mf g$. The converse direction is given by the so called Kantor-Koecher-Tits construction, for which we refer to \cite{Be00}. Our main reference is \cite{Lo75, Lo77}, and in particular we use the list of Jordan identities in \cite{Lo77} and refer to single identities by \JP{xy}.

Consider the decomposition $\mf g = \mf p^+\oplus\mf l\oplus\mf p^-$. Then, the Lie bracket defines quadratic operators
\[
	Q^\mp:\mf p^\pm\to\Hom(\mf p^\mp,\mf p^\pm),\
	x\mapsto Q^\pm_x
	\quad\text{with}\quad
	Q^\mp_x(y):=-\tfrac{1}{2}[[x,y],x]\;.
\]
This defines a Jordan pair structure on $(\mf p^-,\mf p^+)$. For convenience, we set $(V,V'):=(\mf p^-,\mf p^+)$, omit the indices $\pm$ on the quadratic operators and define operators via the relations
\[
	\JTP{x}{y}{z} := D_{x,y}z := Q_{x,z}y := Q_{x+z}y-Q_xy-Q_zy = -[[x,y],z]\;.
\]
The context determines the domains of these operators, \emph{e.g.}\ $\JTP{\;}{}{}$ is a trilinear map from $V\times V'\times V$ to $V$ (resp. from $V'\times V\times V'$ to $V'$). We also need the \emph{Bergman operator} $\B{x}{y}$ which is defined for all pairs $(x,y)\in V\times V'$ by
\begin{align*}
	\B{x}{y} &:= \Id - D_{x,y} + Q_xQ_y\in\End(V).
\end{align*}
The pair $(x,y)$ is called \emph{quasi-invertible} if $\B{x}{y}$ is invertible, and then
\begin{align}\label{eq:QuasiInverse}
	x^y:=\B{x}{y}^{-1}(x-Q_xy)\in V
\end{align}
is called the \emph{quasi-inverse} of $(x,y)$. In the same way one defines the Bergman operator $\B{y}{x}\in\End(V')$ and quasi-inverses $y^x\in V'$ for pairs $(y,x)\in V'\times V$. We note that $\B{x}{y}$ is invertible if and only if $\B{y}{x}$ is invertible.

The restriction of the Killing form $\kappa:\mf g\times\mf g\to\CC$ to the product $\mf p^-\times\mf p^+$ yields a non-degenerate pairing of $V$ and $V'$, which is given (up to a constant factor) in Jordan theoretic terms by the \emph{trace form},
\[
	\tau:V\times V'\to\CC,\; (x,y)\mapsto\Tr D_{x,y}\;,
\]
where $\Tr$ denotes the usual trace of linear operators on $V$. This turns $(V, V')$ into a semisimple complex Jordan pair.

The Cartan involution $\theta$ of $\mf g$ restricted to $\mf p^\mp$ yields antilinear isomorphisms $V\rightleftarrows V'$, which are both denoted by $\overline x:=\theta(x)$ and that satisfy $Q_{\overline x}\overline y=\overline{Q_xy}$. Moreover, the map
\begin{align}\label{eq:innerproduct}
	(\,|\,):V\times V\to\CC,\ (x,z)\mapsto (x|z):=\tau(x,\overline z)
\end{align}
is a positive definite inner product on $V$, and hence $x\mapsto\overline x$ is a \emph{positive Hermitian involution} on $(V,V')$. By means of this involution, we may identify $V$ with $V'$.

\subsection{Vector fields and group actions}\label{sec:VectorFieldsAndGroupActions}
As in \eqref{E: eta}, we may identify $V=\mf p^-$ via the exponential map with an open and dense subset of the compact Hermitian symmetric space $X=G/Q$, i.e., $V\hookrightarrow X$ by $x\mapsto \exp(x)Q$. In this way, the automorphism group $G$ acts on $V$ by birational maps, and elements of its Lie algebra $\mf g$ can be identified with vector fields on $V$, which turn out to be at most quadratic polynomials. Indeed, according to the decomposition $\mf g = \mf p^+\oplus\mf l\oplus\mf p^-$ we have isomorphisms
\begin{align}\label{eq:LieAlgebraVsVectorField}
	\begin{aligned}
	\mf p^+&\cong\Set{q_v(x):=Q_xv}{v\in V'}\;,\\
	\mf p^-&\cong\Set{u(x):=u}{u\in V}\;,
	\end{aligned}
\end{align}
and the vector fields corresponding to $\mf l$ are the \emph{derivations} on $V$, i.e., linear maps $T\in\End(V)$, satisfying
\[
	T\JTP{x}{y}{z} = \JTP{Tx}{y}{z} - \JTP{x}{T^\#y}{z} + \JTP{x}{y}{Tz}\;,
\]
for all $x,z\in V$, $y\in V'$, where $T^\#\in\End(V')$ is the adjoint map of $T$ with respect to the trace from $\tau$. As an example, for any pair $(x,y)\in V\times V'$, the operator $D_{x,y}$ is a derivation with $(D_{x,y})^\# = D_{y,x}$. Within the context of Jordan theoretic arguments, we identify the Lie algebra $\mf g$ with its realization as vector fields on $V$, so an element $X\in\mf g$ is a vector field $\zeta_X:V\to V$ of the form $\zeta_X(x)=u+Tx+q_v(x)$ with $u\in V$, $T\in\mf l$ and $v\in V'$. In order to obtain Lie algebra isomorphisms, we note that the commutator of vector fields $\zeta,\eta\in\mf g$ is given by
\begin{align}\label{eq:VecFieldComm}
	[\zeta,\eta](x) = d\zeta(z)\cdot\eta(z) - d\eta(z)\cdot\zeta(z)\;,
\end{align}
which differs by sign from the usual convention for the Lie bracket of vector fields. In detail, the commutator of two elements $X_1= u_1+T_1+q_{v_1}$ and $X_2 = u_2+T_2+q_{v_2}$ is given by
\[
	[X_1,X_2] = (T_1u_2-T_2u_1) + \big(D_{u_2,v_1} + [T_1,T_2] - D_{u_1,v_2}\big)
							+(q_{T_2^\#v_1} - q_{T_1^\#v_2})\;,
\]
and the Killing form $\kappa$ on $\mf g$ translates to 
\begin{align}\label{eq:KillingForm}
	\kappa(X_1,X_2) = \kappa_\mf l(T_1,T_2) + 2\,\Tr(T_1T_2) - 2\,\tau(u_1,v_2) - 2\,\tau(u_2,v_1)\;,
\end{align}
where $\kappa_\mf l$ denotes the Killing form on $\mf l$. For both formulas, see \emph{e.g.}\ \cite[\S7]{Sa80}.

The birational group action of $\exp(\mf p^\pm)$ on $V\subseteq X$ is given and denoted by
\begin{align*}
	&\trans{u}(x):=\exp(u)x = x+u & &\text{for\quad$u\in V$ (\emph{translation})},\\
	&\qtrans{v}(x):=\exp(q_v)x = x^v & &\text{for\quad$v\in V'$ 
		(\emph{quasi-translation})}.
\end{align*}
The subgroup $L\subseteq G$ is identified with the identity component of the automorphism group $\Aut(V,V')$ of the Jordan pair, which consists of linear automorphisms $h\in\GL(V)$ satisfying $h\JTP{x}{y}{z} = \JTP{hx}{h^{-\#}y}{hz}$ for all $x,z\in V$, $y\in V'$, where $h^{-\#} := (h^\#)^{-1}$ and $h^\#$ is the adjoint map of $h$ with respect to the trace form $\tau$. Therefore, $h\in L$ acts on $V\subseteq X$ by linear transformations $x\mapsto hx$. As an example, for quasi-invertible pairs $(x,y)\in V\times V'$, the Bergman operator $\B{x}{y}$ is a Jordan pair automorphism with $(\B{x}{y})^{\#} = \B{y}{x}$.

The Cartan involution $\theta$ corresponding to the compact real froms $\mf k\subseteq\mf l$ and $\mf u\subseteq\mf g$ translates to $\theta(u+T+q_v) = \overline v - T^* + q_{\overline u}$, where $T^*$ denotes the adjoint of $T$ with respect to the inner product \eqref{eq:innerproduct} on $V$. Therefore, 
\[
	\mf k = \Set{T\in\mf l}{T = -T^*}\;,\quad
	\mf u = \Set{u+T+q_{\overline u}}{u\in V,\ T\in\mf k}\;.
\]
In addition, we note that the centre of $\mf k$ is given by $\mf z(\mf k) = \RR(i\Id)$,
and the Lie group $K$ is the connected component of the group of automorphisms $h\in\Aut(V,V')$ satisfying $h = h^{-*}$, i.e., unitary automorphisms.

\subsection{Idempotents, Peirce decomposition, and rank}
An \emph{idempotent} is a pair $\b e = (e,e')\in V\times V'$ satisfying the relations $Q_ee' = e$ and $Q_{e'}e = e'$. Let $\Idem\subset V\times V'$ denote the set of idempotents. For $\b e\in\Idem$, the operators $D_{e,e'}\in\End(V)$ and $D_{e',e}\in\End(V')$ are diagonalizable with spectra in $\{0,1,2\}$, and the decomposition into eigenspaces,  
\begin{align*}
	V = V_2(\b e)\oplus V_1(\b e)\oplus V_0(\b e)\;,\quad
	V' = V'_2(\b e)\oplus V'_1(\b e)\oplus V'_0(\b e),
\end{align*}
is called the \emph{Peirce decomposition} with respect to $\b e$. We note that in general, $V'_k(\b e)$ differs from the image of $V_k(\b e)$ under the involution of $(V,V')$, i.e., $V'_k(\b e)\neq\overline{V_k(\b e)}$. The \emph{Peirce spaces} $V_k:=V_k(\b e)$, $V'_k:=V'_k(\b e)$ are subject to the following multiplication rules (the \emph{Peirce rules})
\begin{align*}
	\JTP{V_i}{V'_j}{V_k}\subseteq V_{i-j+k},\
	\JTP{V_2}{V'_0}{V} = \JTP{V_0}{V'_2}{V} = \{0\}\;,
\end{align*}
where $V_\ell = \{0\}$ and $V'_\ell=\{0\}$ if $\ell\notin\{0,1,2\}$. In particular, $(V_k,V'_k)$ is a subpair of $(V,V')$. Two idempotents $\b e=(e,e'),\ \b c=(c,c')$ are (\emph{strongly}) \emph{orthogonal} if $c\in V_0(\b e)$ or equivalently $e\in V_0(\b c)$. In this case, the sum $\b e +\b c = (e+c,e'+c')$ is also an idempotent. A non-zero idempotent is called \emph{primitive}, if it is not the sum of two orthogonal non-zero idempotents. A \emph{frame of idempotents} $(\b e_1,\ldots,\b e_r)$ is a maximal system of primitive orthogonal idempotents. The length $r$ of a frame of idempotents is an invariant of the Jordan pair $(V,V')$, called the \emph{rank} and denoted by $\rank V:=r$.

For a system of (pairwise) orthogonal idempotents $(\b e_1,\ldots,\b e_k)$, so in particular for a frame, the operators $(D_{e_\ell,e_\ell'})_{\ell=1,\ldots,k}$  form a commuting set of diagonalizable operators, and hence induce the \emph{joint Peirce decomposition}
\begin{align*}
	V = \bigoplus_{0\leq i\leq j\leq k} V_{ij}
	\quad\text{with}\quad
	V_{ij} = \Set{x\in V}{\JTP{e_\ell}{e'_\ell}{x}
														= (\delta_{i\ell}+\delta_{j\ell})\,x\text{ for all $\ell$}}\;,
\end{align*}
and likewise for $(D_{e_\ell',e_\ell})_{\ell=1,\ldots,k}$ and $V'$. Setting $V_{ji}:=V_{ij}$ and $V'_{ji}:=V'_{ij}$ for $i\neq j$, the Peirce rules refine to the \emph{joint Peirce rules}
\[
	\JTP{V_{ij}}{V'_{jk}}{V_{k\ell}}\subseteq V_{i\ell}\;,
\]
and all other types of products vanish. Again, we point out that the image of $V_{ij}$ under the involution of $(V,V')$ in general differs from $V'_{ij}$, unless we consider a special class of idempotents, namely those defined by tripotents, which we discuss in the next section.

If $(V,V')$ is simple of rank $r$, then, for any primitive idempotent $\b e\in\Idem$, we set
\begin{align}\label{eq:StructureConst}
	p:=\tau(e,e') = 2+\dim V_1(\b e)
\end{align}
which is independent of the choice of $\b e$. This structure constant of $(V,V')$ appears in subsequent formulas which involve the Killing form of $\mf g$.

We also need the notion of \emph{rank} for arbitrary elements $x\in V$ or $y\in V'$. For $x\in V$ the subspace $[x]:=Q_xV'\subseteq V$ is called the \emph{principal inner ideal} generated by $x$. The \emph{rank} of $x$, $\rank x$, is defined as the maximum length of all chains $[x_0]\subsetneq[x_1]\subsetneq\cdots\subsetneq[x_k]$ with $x_i\in[x]$. Similarly, one defines $[y]:=Q_yV\subseteq V'$ and $\rank y$ for $y\in V'$. If $\b e=(e,e')$ is an idempotent, then the Peirce rules imply $[e] = V_2(\b e)$ and $[e'] = V'_2(\b e)$, and it turns out \cite[\S3]{Lo91}, that $\rank e = \rank e'$. We therefore define $\rank\b e:=\rank e$ and call this the \emph{rank of the idempotent $\b e$}. The set $\Idem$ of idempotents therefore decomposes into subset of constant rank idempotents, denoted by $\Idem_k:=\Set{\b e\in\Idem}{\rank\b e = k}$. Since we assume $(V,V')$ to be finite dimensional and simple, each element $e\in V$ admits a \emph{completion to an idempotent}, i.e., an element $e'\in V'$ such that $(e,e')\in\Idem$. From this, it follows that the decomposition of an idempotent $\b e$ of rank $k$ into primitive orthogonal idempotents has exactly $k$ summands. Therefore, the maximum of all ranks of elements in $V$ coincides with the rank of $V$ as it is defined above.

\subsection{Tripotents and spectral decomposition}
The involution on $(V,V')$ admits the definition of (odd) powers of elements, namely for $x\in V$ define $x^{(1)}:= x$ and inductively $x^{(2k+1)}:=Q_x\overline{x^{(2k-1)}}$ for $k\geq 1$. An element $e\in V$ is called a \emph{tripotent}, if $e^{(3)}= e$, i.e., $e = Q_e\overline e$. Equivalently, $e$ is a tripotent if and only if $(e,\overline e)$ is an idempotent. In particular, all notions defined in the last section apply to the idempotent $(e,\overline e)$. Without causing ambiguities in notation, we may identify $e$ with $(e,\overline e)$ if necessary. Concerning the Peirce decomposition, we note that $V'_k(e)=\overline{V_k(e)}$ for a tripotent $e$, and $V'_{ij} = \overline{V_{ij}}$ for a system of orthogonal tripotents $(e_1,\ldots,e_k)$. The set of tripotents is denoted by $\Tri\subseteq V$.

For the explicit description of the moment map on $X$, we will make use of the following spectral theorem \cite[\S3.12]{Lo77}.
\begin{thm}[Spectral decomposition]
	Let $(V,V')$ be a finite dimensional semisimple Jordan pair with positive Hermitian 
	involution. Then every element $x\in V$ admits a unique decomposition
	\[
		x = \sigma_1e_1+\cdots+\sigma_k e_k\;,\quad\sigma_1>\cdots>\sigma_k>0\;,
	\]
	where the $e_i$ are pairwise orthogonal non-zero tripotents which are real linear combinations
	of powers of $x$, and $\sigma_i\in\RR$.
\end{thm}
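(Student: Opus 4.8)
The plan is to reduce the whole statement to linear algebra on the real span of the odd powers of $x$, where a single operator organizes everything. Write $A:=\mathrm{span}_{\RR}\{x^{(1)},x^{(3)},x^{(5)},\dots\}\subseteq V$ and consider $\Psi\colon V\to V$, $\Psi(y):=Q_x\overline y$. Since $\overline{\phantom{x}}$ is antilinear, $\Psi$ is antilinear on $V$, but on real combinations of odd powers it is $\RR$-linear, and by the very definition of the powers $\Psi(x^{(2k+1)})=Q_x\overline{x^{(2k+1)}}=x^{(2k+3)}$. Hence $A$ is $\Psi$-invariant, $\Psi|_A$ is a genuine real-linear operator, and $A=\mathrm{span}_{\RR}\{\Psi^kx:k\ge 0\}$ is cyclic with generator $x$. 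First I would record the model computation: if a decomposition $x=\sum_i\sigma_ie_i$ into pairwise orthogonal nonzero tripotents exists, then the Peirce rules give $Q_x=\sum_i\sigma_i^2Q_{e_i}$ and $Q_{e_i}\overline{e_j}=\delta_{ij}e_i$, so that $x^{(2k+1)}=\sum_i\sigma_i^{2k+1}e_i$ and $\Psi e_i=\sigma_i^2e_i$. This identifies the $e_i$ with the eigenvectors of $\Psi|_A$ for the distinct eigenvalues $\sigma_i^2$, and yields \emph{uniqueness} at once: the eigenvalues $\sigma_i^2$ and the eigenlines $\RR e_i$ of $\Psi|_A$ are intrinsic, the normalization $\sigma_i>0$ fixes each $\sigma_i$, and the ordering fixes the indexing.

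For \emph{existence} I would run this backwards. The operator $\Psi|_A$ is self-adjoint for the positive definite inner product $(x|z)=\tau(x,\overline z)$: via $\Psi y=\tfrac12\JTP{x}{\overline y}{x}$ and the adjoint identity $(D_{a,b})^\#=D_{b,a}$ for the trace form, this reduces to the symmetry built into the positive Hermitian involution (\cite{Lo77}). The positivity of the involution moreover forces $\Psi|_A$ to be positive definite (note $\Psi|_A^2=Q_xQ_{\overline x}|_A$ is positive semidefinite, and $x^{(3)}=0$ would give $x=0$), so all its eigenvalues are strictly positive. Being cyclic for a self-adjoint operator, $A$ has simple spectrum; list the eigenvalues as $\sigma_1^2>\cdots>\sigma_k^2>0$ with $\sigma_i>0$, so $\dim_\RR A=k$. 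Because the $\sigma_j^2$ are distinct and $\sigma_j\neq 0$, the Vandermonde-type system $\sum_{m=0}^{k-1}c_{i,m}\,\sigma_j^{2m+1}=\delta_{ij}$ is solvable; I then \emph{define} $e_i:=\sum_m c_{i,m}\,x^{(2m+1)}\in A$. Expanding $x=\sum_i\xi_iv_i$ in an eigenbasis (with all $\xi_i\neq 0$ by cyclicity) and using $x^{(2m+1)}=\Psi^mx$, one checks $e_i=(\xi_i/\sigma_i)v_i$ spans the $\sigma_i^2$-eigenline and that $\sum_i\sigma_ie_i=x$; by construction each $e_i$ is a real linear combination of powers of $x$.

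It remains to prove that these $e_i$ are genuine pairwise orthogonal tripotents, and this is the main obstacle, since the construction so far is purely linear-algebraic and sees nothing of the quadratic operations. The key is to upgrade the interpolation to a \emph{power-associativity} statement, namely the identities
\[
	Q_{x^{(2m+1)}}\,\overline{x^{(2n+1)}} = x^{(4m+2n+3)}\qquad(m,n\ge 0),
\]
which encode that $Q_a$ acts as ``multiplication by $a^2$'' along the single generator $x$. These I would prove by induction from the fundamental formula $Q_{Q_ab}=Q_aQ_bQ_a$, the defining recursion for the powers, and the compatibility $Q_{\overline a}\overline b=\overline{Q_ab}$ of the involution (all of these are among the identities collected in \cite{Lo77}). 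Granting them, the assignment $t^{2m+1}\mapsto x^{(2m+1)}$ intertwines the commutative associative structure of $\RR[t]/(m_\Psi)$, where $m_\Psi$ is the minimal polynomial of $\Psi|_A$, with the triple operations on $A$. Since $\Psi|_A$ is diagonalizable with distinct positive eigenvalues, $\RR[t]/(m_\Psi)\cong\RR^k$ is split semisimple; the $e_i$ are precisely the images of its orthogonal idempotents $\varepsilon_i$, and transporting the relations $\varepsilon_i\varepsilon_j=\delta_{ij}\varepsilon_i$ through the isomorphism yields exactly $Q_{e_i}\overline{e_j}=\delta_{ij}e_i$. In particular $Q_{e_i}\overline{e_i}=e_i$, so each $e_i$ is a tripotent, and distinct $e_i,e_j$ are orthogonal.

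Finally I would note that finite-dimensionality bounds $\deg m_\Psi$, so the sum is finite with $k\le\rank V=r$, and that simplicity of $(V,V')$ (to which one reduces by splitting off simple ideals) is not needed for the decomposition itself. The genuinely delicate point throughout is the positivity input: it is the positive Hermitian involution that forces the spectrum of $\Psi|_A$ to be real and strictly positive, hence the $\sigma_i$ to be real and nonzero. I expect the careful verification of self-adjointness and positivity, together with the power-associativity identities, rather than the formal interpolation, to carry the real content of the proof.
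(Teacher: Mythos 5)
First, a framing remark: the paper does not prove this theorem at all — it quotes it verbatim from \cite[\S 3.12]{Lo77} — so your attempt has to be judged as a from-scratch proof, and it contains a genuine gap at exactly the point where the theorem's real content lies.

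The gap is the positive definiteness of $\Psi|_A$. Granting self-adjointness (which itself already needs the trace-form identities and the reality of $(\,\cdot\,|\,\cdot\,)$ on $A$), you only get that the eigenvalues of $\Psi|_A$ are \emph{real}. Your parenthetical justification adds nothing: $\Psi|_A^2$ being positive semidefinite is automatic for \emph{any} self-adjoint operator and carries no information about the signs of the eigenvalues of $\Psi|_A$ itself, and the implication ``$x^{(3)}=0\Rightarrow x=0$'' (which you also do not prove) concerns at most the kernel, not negative eigenvalues. Negative eigenvalues are precisely what positivity of the involution must exclude, and they can genuinely occur once positivity is dropped: on the rank-one pair $(\C,\C)$ with $Q_xy=x^2y$ and the Hermitian (but non-positive) involution $x\mapsto -\bar x$, the form $\tau(u,\overline v)$ is still symmetric and definite, $A=\RR x$ is still cyclic, power associativity and the Vandermonde interpolation still hold, yet $\Psi|_A$ is multiplication by $-|x|^2$ and the pair has \emph{no} nonzero tripotents, so the conclusion fails. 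Since every formal ingredient you use survives in that example, a correct proof must convert positive definiteness of $\tau(u,\overline v)$ into the sign statement $\tau(Q_x\overline u,\overline u)\geq 0$ for $u\in A$ — equivalently, positivity of the \emph{shifted} Hankel form $\bigl[\langle x^{(2m+3)},x^{(2n+1)}\rangle\bigr]$, a Stieltjes-type condition strictly stronger than the positivity of the Gram matrix $\bigl[\langle x^{(2m+1)},x^{(2n+1)}\rangle\bigr]$ that you do have. Your write-up never produces this inequality; it is where Loos's argument (via the structure of the associative algebra with involution generated by $x$) does its actual work.

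There is also a smaller error at the end: orthogonality of tripotents does \emph{not} follow from $Q_{e_i}\overline{e_j}=0$. By the Peirce rules, $Q_{e_i}$ annihilates $V'_1(e_i)$ as well as $V'_0(e_i)$, so $Q_{e_i}\overline{e_j}=0$ only forces $e_j\in V_1(e_i)\oplus V_0(e_i)$, whereas orthogonality means $e_j\in V_0(e_i)$; for instance in $M_{1,2}(\C)$ the tripotents $e=(1,0)$ and $c=(0,1)$ satisfy $Q_e\overline c=0$ but are not orthogonal. To repair this you need the polarized power-associativity $\JTP{x^{(a)}}{\overline{x^{(b)}}}{x^{(c)}}=2\,x^{(a+b+c)}$ — note that your $Q$-form for single powers does not even determine $Q_u$ for sums $u$ of powers, since $Q$ is quadratic — after which the transported identity $\JTP{e_i}{\overline{e_i}}{e_j}=0$ does give $e_j\in V_0(e_i)$. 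This second point is fixable by routine means; the positivity gap above is not, without a new idea.
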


\subsection{Idempotents and roots}\label{subsec:Correspondence}
Idempotents are related to $\mf{sl}_2$-triples in $\mf g$ in the following way: If $(e,e')\in\Idem $ is an idempotent, then $(E,H,F):=(q_{e'},D_{e,e'},-e)$ is an $\mf{sl}_2$-triple in $\mf g$ with $q_{e'}\in\mf p^+$, $D_{e,e'}\in\mf l$ and $-e\in\mf p^-$ (constant vector field). Indeed, according to \eqref{eq:VecFieldComm} and using \JP{12}, it follows that
\begin{align*}
	&[D_{e,e'},q_{e'}] = q_{D_{e',e}e'} = 2\,q_{e'}\,,\quad\\
	&[D_{e,e'},-e] = -D_{e,e'}e = -2\,e\,,\quad\\
	&[-e,q_{e'}] = D_{e,e'}\;.
\end{align*}
Conversely, if $(E,H,F)$ is an $\mf{sl}_2$-triple in $\mf g$ with $E\in\mf p^+$, $H\in\mf l$ and $F\in\mf p^-$, then the identifications $V'\cong\mf p^+$ and $V\cong\mf p^-$ in \eqref{eq:LieAlgebraVsVectorField} yield an idempotent $(e,e')\in\Idem$ with $E=q_{e'}$, $F = -e$  and $H = D_{e,e'}$.

This correspondence between idempotents and certain $\mf{sl}_2$-triples in $\mf g$ also applies to tripotents: Starting	with a tripotent $e\in\Tri$, we obtain the $\mf{sl}_2$-triple $(E,H,F):=(q_{\overline e},D_{e,\overline e},-e)$ with the additional property $\theta(E) = -F$, where $\theta$ denotes the Cartan involution on $\mf g$, and hence	$iH\in\mf k$. Conversely, any $\mf{sl}_2$-triple $(E,H,F)$ with $E\in\mf p^+$ and $\theta(E) = -F$ yields a tripotent $e\in\Tri$ corresponding to $E = q_{\overline e}$.

In particular, the $\mf{sl}_2$-triples associated to the system of strongly orthogonal roots $\gamma_1,\ldots,\gamma_r$ yield a system $(e_1,\ldots,e_r)$ of tripotents, and it is straightforward to see that strong orthogonality of the roots is equivalent to strong orthogonality of the tripotents. Therefore,	$(e_1,\ldots,e_r)$ is a frame associated to the system of strongly orthogonal roots. We summarize the situation by
\begin{align*}
	(E_{\gamma_j},H_{\gamma_j},F_{\gamma_j}) = (q_{\overline e_j},D_{e_j,\overline e_j},-e_j)\;.
\end{align*}
Moreover, using the Killing form \eqref{eq:KillingForm}, the relation $\gamma_j = c\cdot\kappa(H_{\gamma_j},-)$ with $c = 2/\kappa(H_{\gamma_j},H_{\gamma_j})$ yields
\begin{align}\label{eq:GammaIdentity}
	\gamma_j(T) = \tfrac{1}{p}\,\tau(Te_j,\overline e_j)
	\quad\text{for all}\quad T\in\mf l\;,
\end{align}
where $p$ is the structure constant defined by \eqref{eq:StructureConst}. This is the Jordan theoretic description of the strongly orthogonal roots.

Recall the decomposition \eqref{eq:MaxAbDecomp} of the maximal abelian subalgebra $\mf t$ of $\mf k$ into $\mf t = \mf s\oplus\mf s^\bot$. In Jordan theoretic terms, we obtain
\begin{align*}
	\mf s = \left\langle i\,D_{e_j,\overline e_j},\; i\Id_V\,\mid\,j=1,\ldots,r\right\rangle_\RR,
\end{align*}
and the condition $\mf s^\bot\subseteq\Set{T\in\mf l}{\gamma_j(T) = 0}$ translates to
\begin{align*}
	\mf s^\bot \subseteq \Set{T\in\mf k}{Te_j=0\text{ for all j}}.
\end{align*}

\subsection{Determinants}
We have to deal with two kinds of determinants. On the one hand, there is the \emph{Jordan pair determinant} $\Delta:V\times V'\to\CC$ associated to a Jordan pair $(V,V')$, often also called the \emph{generic minimum polynomial}, cf.\ \cite[\S16]{Lo75}.
On the other hand, let $\b e=(e,e')$ be an idempotent in $(V,V')$, then the Peirce $2$-space $V_2(e)$ becomes a unital Jordan algebra with product $x\circ z :=\tfrac{1}{2}\JTP{x}{e'}{z}$ and unit element $e$. The Jordan algebra determinant corresponding to $V_2(e)$ is denoted by $\Delta_{\b e}:V_2(\b e)\to\CC$. Likewise, $V'_2(\b e)$ is a Jordan algebra with product $y\circ w:=\tfrac{1}{2}\JTP{y}{e}{w}$, unit element $e'$, and Jordan algebra determinant $\Delta'_{\b e}:V'_2(\b e)\to\CC$. The connection between Jordan pair determinant and Jordan algebra determinants is given as follows:

\begin{lemma}\label{lem:JordanDeterminants}
	If $\b e = (e,e')$ is an idempotent, then
	\begin{align}\label{eq:JordanDeterminants}
		\Delta_\b e(x) = \Delta(e-x,e')\;,\quad
		\Delta'_\b e(y) = \Delta(e,e'-y)
	\end{align}
	for all $x\in V_2(\b e)$ and $y\in V'_2(\b e)$.
\end{lemma}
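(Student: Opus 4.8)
The plan is to prove the first identity $\Delta_{\b e}(x)=\Delta(e-x,e')$; the second follows verbatim after interchanging the two members $V,V'$ of the pair and the roles of $e,e'$, the involution and the Peirce rules being symmetric in $V$ and $V'$. Everything rests on one operator identity on the Peirce $2$-space: for every $a\in V_2(\b e)$,
\begin{equation*}
	\B{a}{e'}\big|_{V_2(\b e)}=Q_{e-a}.
\end{equation*}
Indeed, in the unital Jordan algebra $V_2(\b e)$ (product $x\circ z=\tfrac12\JTP{x}{e'}{z}$, unit $e$) the pair operator $Q_{e'}$ restricts to the identity and $D_{a,e'}=2L_a$ is twice the multiplication $L_a\colon z\mapsto a\circ z$, so that $\B{a}{e'}=\Id-D_{a,e'}+Q_aQ_{e'}=\Id-2L_a+Q_a$, and this equals $Q_{e-a}$ by the linearized fundamental formula $Q_{e-a}=Q_e-Q_{e,a}+Q_a$ together with $Q_e=\Id$ and $Q_{e,a}=2L_a$. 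Specializing $a=e-x$ gives $\B{e-x}{e'}\big|_{V_2(\b e)}=Q_x$.

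I would then compare determinants. On the subpair $(V_2(\b e),V'_2(\b e))$ the generic minimum polynomial is, up to the normalization ``value $1$ at the origin'', the reduced defining equation of the locus where the Bergman operator degenerates; by the identity above this locus is $\{x: Q_x\text{ singular}\}=\{x: x\text{ non-invertible in }V_2(\b e)\}$, which is exactly the vanishing locus of the algebra determinant $\Delta_{\b e}$. Since both are reduced polynomials of degree $\rank\b e$ in $x$ with the same vanishing locus, they agree up to a constant, and the constant is fixed to $1$ by the two boundary values $\B{0}{e'}=Q_e=\Id$ (so the value at $x=e$ is $1=\Delta_{\b e}(e)$) and $\B{e}{e'}=Q_0=0$ (so the value at $x=0$ is $0=\Delta_{\b e}(0)$). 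Hence the subpair determinant of $(e-x,e')$ equals $\Delta_{\b e}(x)$.

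It remains to pass from the full pair to the subpair, i.e.\ to see that the full determinant $\Delta(e-x,e')$ introduces no factors beyond those of the subpair. Here $e-x\in V_2(\b e)$ and $e'\in V'_2(\b e)$, so the Peirce rules show that $\B{e-x}{e'}$ preserves the grading $V=V_2(\b e)\oplus V_1(\b e)\oplus V_0(\b e)$, acts as the identity on $V_0(\b e)$ and as $Q_x$ on $V_2(\b e)$; the remaining block on $V_1(\b e)$ has determinant again a power of $\Delta_{\b e}(x)$. Consequently $\det\B{e-x}{e'}$ is a power of $\Delta_{\b e}(x)$ with no new irreducible factor, and the reduced full-pair determinant $\Delta(e-x,e')$ coincides with $\Delta_{\b e}(x)$, as claimed.

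The genuine obstacle is precisely the control of the $V_1(\b e)$-block in the previous paragraph, i.e.\ ruling out spurious irreducible factors when the rank-$\rank V$ determinant is restricted with second argument frozen at $e'$. I would secure it either by invoking the behaviour of the generic minimum polynomial under the Peirce decomposition from \cite{Lo75}, or by a Zariski-density argument: completing $\b e$ to a frame of tripotents and using the spectral decomposition to evaluate both sides on the totally real, Zariski-dense set of $x=\sum_i\sigma_ie_i\in V_2(\b e)$, where each side factors as the same product of the $\sigma_i$ while the tripotents outside $V_2(\b e)$ pair trivially with $e'$. The comparison of the two normalizations of the generic minimum polynomial (pair versus algebra) is the only delicate bookkeeping, and it is forced by the two boundary values recorded above.
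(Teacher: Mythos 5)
Your opening identity is correct and is the right engine: for $a\in V_2(\b e)$ one has $\B{a}{e'}|_{V_2(\b e)}=U_{e-a}$, the algebra $U$-operator of $V_2(\b e)$ (writing $Q_{e-a}$ for it is only legitimate after identifying $V'_2(\b e)$ with $V_2(\b e)$ via $Q_{e'}$ and $Q_e$). This is precisely the operator form of the relation the paper's own proof rests on, namely $x^{-1}=e+(e-x)^{e'}$. But the gap you flag yourself is genuine, and neither of your proposed repairs closes it as stated. First, you repeatedly trade equality of zero loci for equality of polynomials, which requires the polynomial $x\mapsto\Delta(e-x,e')$ to be \emph{reduced}; this is not free, since reducedness of a polynomial in two sets of variables does not survive freezing one of them (already $x^2-y$ becomes $x^2$ at $y=0$), so ``the reduced defining equation of the degeneracy locus'' is an assertion to be proved, not a characterization of $\Delta$ you may invoke. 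Second, the claim that the $V_1(\b e)$-block of $\B{e-x}{e'}$ has determinant a power of $\Delta_{\b e}(x)$ is essentially the lemma in disguise: without it you cannot even conclude that invertibility of $x$ in $V_2(\b e)$ forces quasi-invertibility of $(e-x,e')$ in the full pair, because your step one controls only the $V_2$-block.

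Third, the Zariski-density repair fails as written. A general idempotent $\b e$ is not a tripotent, so it cannot be ``completed to a frame of tripotents''; one must first reduce to the tripotent case, which is possible since $L$ acts transitively on $\Idem_k$, tripotents of rank $k$ exist, and both sides of \eqref{eq:JordanDeterminants} are $L$-equivariant ($\Delta(hx,h^{-\#}y)=\Delta(x,y)$ and $\Delta_{h\b e}(hx)=\Delta_{\b e}(x)$) --- but this step is absent from your argument. More seriously, the set you evaluate on, the real span of one fixed frame $e_1,\ldots,e_k$ inside $V_2(\b e)$, has real dimension $k=\rank\b e$, whereas $\dim_\CC V_2(\b e)$ is in general much larger (for $V_2(\b e)\cong M_k(\CC)$ it is $k^2$); such a set is \emph{not} Zariski dense. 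What is dense is the self-adjoint real form $\Set{x\in V_2(e)}{Q_e\overline x = x}$: each of its elements admits a spectral decomposition $x=\sum_i\sigma_ic_i$ along a frame of $V_2(e)$ that sums to $e$, and on such elements both $\Delta_{\b e}(x)$ and $\Delta(e-x,\overline e)$ equal $\prod_i\sigma_i$ by Loos's product formula $\Delta(x,y)=\prod_i(1-x_iy_i)$; with this replacement (and the prior reduction to tripotents) your density argument does close the gap, working directly with the full-pair determinant and bypassing the subpair and the $V_1$-block entirely. For contrast, the paper's proof avoids all determinant and block bookkeeping: it derives \eqref{eq:JordanDeterminants} from the single rational identity $x^{-1}=e+(e-x)^{e'}$ relating Jordan algebra inverses to quasi-inverses, whose two sides have $\Delta_{\b e}$ and $\Delta(e-\cdot,e')$ as their canonical denominators.
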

\begin{proof}
Each identity is a consequence of the relation between Jordan algebra inverses and quasi-inverses, \emph{e.g.}\ $x^{-1} = e + (e-x)^{e'}$ for invertible $x\in V_2(\b e)$. We omit the details.
\end{proof}

In the following, we use \eqref{eq:JordanDeterminants} to extend the Jordan algebra determinants $\Delta_{\b e}$ and $\Delta'_{\b e}$ to polynomial maps on $V$ and $V'$. By abuse of notation, these extensions are also denoted by $\Delta_{\b e}$ and $\Delta'_{\b e}$. We note that if $V = V_2(\b e)\oplus V_1(\b e)\oplus V_0(\b e)$ is the Peirce decomposition with respect to $\b e$, then $\Delta_{\b e}$ vanishes on $V_1(\b e)\oplus V_0(\b e)$. Analog results hold for $\Delta'_{\b e}$. For later use, we note the following relation between the rank of idempotents and zeros of Jordan algebra determinants:
\begin{lemma}\label{lem:RankConditions}
	Let $x\in V$ be a fixed element, and $k\in\NN$. Then, $\Delta_\b c(x) = 0$ 
	for all idempotents $\b c$ of rank $k$ if and only if $k>\rank x$. The same holds for $y\in V'$
	and $\Delta'_\b c(y) = 0$ in place of $\Delta_\b c(x) = 0$.
\end{lemma}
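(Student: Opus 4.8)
The plan is to reduce the statement to an invertibility criterion inside the Jordan algebra $V_2(\b c)$ and then to control ranks. We treat $x\in V$; the case $y\in V'$ is entirely symmetric (apply the argument in the opposite pair, or use that the involution $\overline{\cdot}$ preserves rank). By the spectral decomposition theorem we write $x=\sigma_1e_1+\cdots+\sigma_me_m$ with $\sigma_i>0$ and pairwise orthogonal tripotents $e_i$, and refine each $e_i$ into primitive orthogonal tripotents to get $x=\sum_{i,l}\sigma_ic_{il}$, where $(c_{il})$ is a system of $\rho$ pairwise orthogonal primitive tripotents. The support tripotent $e:=\sum_{i,l}c_{il}$ then has rank $\rho$, and $x$ is an invertible element of $V_2(e)$, so $[x]=Q_xV'=V_2(e)$ and hence $\rank x=\rho$. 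Since $\Delta_{\b c}$ restricts to the Jordan-algebra determinant on $V_2(\b c)$ and $\Delta_{\b c}(x)=\Delta_{\b c}(P_2(\b c)x)$, where $P_2(\b c)=Q_cQ_{c'}$ is the Peirce-$2$ projection, Lemma \ref{lem:JordanDeterminants} shows that $\Delta_{\b c}(x)\neq 0$ holds exactly when $P_2(\b c)x$ is invertible in $V_2(\b c)$, i.e.\ exactly when $\rank\big(P_2(\b c)x\big)=k$. Thus the lemma amounts to showing that a rank-$k$ idempotent $\b c$ with $\rank\big(P_2(\b c)x\big)=k$ exists if and only if $k\le\rank x$.

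For the direction $k\le\rank x$ I would argue constructively. Choose any $k$ of the primitive tripotents $c_{il}$ above, let $c$ be their sum (a tripotent of rank $k$), and set $\b c:=(c,\overline c)$. By the Peirce rules the chosen $c_{il}$ lie in $V_2(c)$ and the remaining ones in $V_0(c)$, so $P_2(\b c)x$ equals the partial sum $\sum_{\mathrm{chosen}}\sigma_ic_{il}$. This is a combination of a frame of $V_2(c)$ with nonzero coefficients, hence invertible in $V_2(c)$, so $\Delta_{\b c}(x)\neq 0$. Consequently, when $k\le\rank x$ there is a rank-$k$ idempotent not annihilating $x$, and so the condition ``$\Delta_{\b c}(x)=0$ for all rank-$k$ idempotents'' fails, as required.

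The remaining and genuinely substantive direction is that $k>\rank x$ forces $\Delta_{\b c}(x)=0$ for \emph{every} idempotent $\b c$ of rank $k$. By the reduction above it suffices to prove the monotonicity $\rank\big(P_2(\b c)x\big)\le\rank x$ for an arbitrary idempotent $\b c$, since then $k>\rank x$ makes $P_2(\b c)x$ non-invertible in $V_2(\b c)$. I expect this to be the main obstacle. Writing $P_2(\b c)x=Q_c(Q_{c'}x)$ and invoking the fundamental formula $Q_{Q_ab}=Q_aQ_bQ_a$ (cf.\ \cite{Lo75}) gives $[Q_ab]=Q_aQ_bQ_aV'\subseteq Q_aV'=[a]$, hence the easy bound $\rank(Q_ab)\le\rank a$. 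The point requiring real care is the complementary estimate $\rank(Q_ab)\le\rank b$, which does \emph{not} follow from an inclusion of principal inner ideals and instead reflects that the quadratic operators do not raise rank; I would derive it from the rank theory of \cite{Lo91}, using that for invertible $a$ the operator $Q_a$ is a rank-preserving structural transformation and passing to the support idempotent of $b$ in the general case. Applying $\rank(Q_ab)\le\rank b$ twice to $P_2(\b c)x=Q_c(Q_{c'}x)$ yields $\rank\big(P_2(\b c)x\big)\le\rank(Q_{c'}x)\le\rank x$, which completes this direction. The statement for $y\in V'$ and $\Delta'_{\b c}$ follows by the same argument in the opposite pair.
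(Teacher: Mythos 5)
Your proposal is correct and takes essentially the same route as the paper: both reduce the claim to invertibility of the Peirce-$2$ component of $x$ in $V_2(\b c)$ (so to $\rank\big(Q_cQ_{c'}x\big)=k$), both settle the direction $k\le\rank x$ by summing $k$ primitive constituents of a decomposition of $x$'s support into an explicit rank-$k$ idempotent, and both ultimately defer the key monotonicity to the rank theory of \cite{Lo91}. The only differences are cosmetic: the paper completes $x$ itself to an idempotent $(x,e')$ instead of using the spectral decomposition into tripotents, and it cites the inequality $\rank x_2\le\rank x$ for Peirce components directly from \cite[\S3]{Lo91} rather than factoring the Peirce projection as $Q_cQ_{c'}$ and applying $\rank(Q_ab)\le\rank b$ twice.
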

\begin{proof}
First assume that $k\leq\rank x$. Let $\b e = (e,e')$ be a completion of $e=x$ to an idempotent, and let $\b e = \b e_1+\cdots+\b e_\ell$ be a decomposition into primitive orthogonal idempotents. Then, $\ell=\rank\b e=\rank x\geq k$, and hence $\b c:=\b e_1+\cdots+\b e_k$ is well-defined and satisfies $\Delta_\b c(x) = \Delta_\b c(e) = 1$. This proves the 'only if' part. For the converse direction assume $\Delta_\b c(x)\neq 0$ for some $\b c\in\Idem_k$. Let $x = x_2+x_1+x_0$ be the components of $x$ in the Peirce decomposition of $V$ with respect to $\b c$. Then, $\Delta_\b c(x) = \Delta_\b c(x_2)$, and this is non-vanishing if and only if $x_2$ is invertible in the unital Jordan algebra $V_2(\b c)$. Furthermore, this is equivalent to the identity $[x_2]=V_2(\b c)$, where $[x_2]=Q_{x_2}V'$ is the principal inner ideal corresponding to $x_2$. Since $V_2(\b c)=[c]$, this implies that $\rank x_2 = \rank c =\rank\b c$. Now the statement follows from the inequality $\rank x_2\leq\rank x$, cf.\ \cite[\S3]{Lo91}.
\end{proof}

\subsection{Jordan theoretic model of $X$}
The concept of quasi-inverses \eqref{eq:QuasiInverse} provides a Jordan theoretic model for the compact Hermitian symmetric space $X=G/Q$ (due to O.~Loos \cite{Lo77}): For each $a\in V'$ the map $\iota_a:V\to X$ given by $\iota_a(x) =\exp(q_a)\exp(x)Q = \qtrans{a}\trans{x}Q$ is an open and dense imbedding of $V$ in $X$. This yields an open covering of $X$ by the subsets $X_a:=\iota_a(V)$, $a\in V'$. It turns out that $X_a\cap X_b$ is the image of $\set{x\in V}{(x,a-b)\text{ quasi-invertible}}$ under $\iota_a$, and the transition map $\varphi^a_b=\iota_b^{-1}\circ\iota_a$ is given by
\begin{align*}
	\varphi^a_b(x) = x^{a-b}\;.
\end{align*}
This description of $X$ may be summarized in the equivalence relation $X \cong (V\times V')/R$ with
\begin{align}\label{eq:ElemEquivRel}
					(x,a)\,R\,(\tilde x,b)\iff
					\left\{
					\begin{aligned}
						&(x,a-b)\text{ is quasi-invertible}\\
						&\text{and }\tilde x = x^{a-b}\;.
					\end{aligned}\right.
\end{align}
The equivalence class of an element $(x,a)$ is denoted by $\GP{x}{a}$. For $a=0$, the imbedding $\iota_0$ is just the standard imbedding of $V\cong\mf p^-$ into $X$, which we have already used in Section~\ref{sec:VectorFieldsAndGroupActions}. In the following, we write $V\subseteq X$ for the identification of $V$ with its image under $\iota_0$ in $X$.

Some questions require yet another description of the elements of $X$. Consider the description via equivalence classes as it is given in \eqref{eq:ElemEquivRel}. Whereas the chart maps concerns elements with fixed second entry $a\in V'$, i.e., $x\mapsto\GP{x}{a}$, the following proposition selects for each element in $X$ a representative which is adapted to arguments concerning the action of the automorphism group of $(V,V')$ on $X$.

\begin{prop}\label{prop:Representatives}
	Let $\GP{x}{a}$ be an element of $X=(V\times V')/R$. Then, there exist an idempotent
	$\b e=(e,e')\in\Idem$ and an element $z\in V_0(\b e)$ such that $\GP{x}{a} = \GP{e+z}{e'}$.
	Moreover, the idempotent $\b e$ can be chosen to be tripotent, i.e., $\b e = (e,\overline e)$.
\end{prop}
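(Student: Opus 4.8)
The plan is to reduce an arbitrary point of $X$ to the asserted normal form by means of the Cartan (polar) decomposition of the compact symmetric space $X = U/K$, which produces a tripotent directly and so establishes the stronger assertion in one stroke. Write $\mf u = \mf k\oplus\mf m$ for the Cartan decomposition attached to the symmetric pair $(U,K)$; from the description of $\mf u$ in Section~\ref{sec:VectorFieldsAndGroupActions} one reads off $\mf m = \Set{u + q_{\overline u}}{u\in V}$. Let $(e_1,\ldots,e_r)$ be the frame of tripotents attached to the strongly orthogonal roots $\gamma_1,\ldots,\gamma_r$ as in Section~\ref{subsec:Correspondence}, put $\xi_j := e_j + q_{\overline e_j}\in\mf m$, and set $\mf a := \bigoplus_{j=1}^r\RR\,\xi_j\subseteq\mf m$.

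First I would verify that $\mf a$ is a \emph{maximal} abelian subspace of $\mf m$. Using the vector-field bracket formula of Section~\ref{sec:VectorFieldsAndGroupActions}, one finds $[\xi_i,\xi_j] = D_{e_j,\overline e_i} - D_{e_i,\overline e_j}$, and both summands vanish for $i\neq j$ because $D_{e_i,\overline e_j}=0$ for orthogonal tripotents (a direct consequence of the joint Peirce rules). Since $\dim\mf a = r$ is exactly the rank of $X$ as a symmetric space, $\mf a$ is maximal abelian. The polar decomposition $U = K\exp(\mf a)K$, together with the fact that $o := \GP{0}{0}$ is fixed by $K$, then yields $X = U\cdot o = K\exp(\mf a)\cdot o$. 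Hence every point admits a representative $p = k\cdot\exp\!\big(\sum_{j=1}^r t_j\xi_j\big)\cdot o$ with $k\in K$ and $t_1,\ldots,t_r\in\RR$.

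The heart of the argument is the explicit evaluation of $g\cdot o$ for $g = \exp(\sum_j t_j\xi_j)$. Because the $\xi_j$ commute and the $e_j$ are pairwise orthogonal, $g$ factors into commuting one-parameter subgroups, one for each $\mf{sl}_2$-triple $(q_{\overline e_j},D_{e_j,\overline e_j},-e_j)$ of Section~\ref{subsec:Correspondence}, and the $j$-th factor acts only on the $e_j$-``slot'', a projective line. A direct $\mf{sl}_2$-computation shows that $\exp(t_j\xi_j)\cdot o$ equals $\tan(t_j)\,e_j$ in the standard chart $V\hookrightarrow X$ when $t_j\not\equiv\tfrac\pi2\pmod\pi$ (the sign being immaterial), and equals the point $\GP{e_j}{\overline e_j}$, which leaves the standard chart, when $t_j\equiv\tfrac\pi2$. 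Writing $S := \Set{j}{t_j\equiv\tfrac\pi2\pmod\pi}$, setting $e := \sum_{j\in S}e_j$ and $z := \sum_{j\notin S}\tan(t_j)\,e_j$, and using the linearity $q_{\sum_{j\in S}\overline e_j} = q_{\overline e}$ together with the commuting-factor structure, I would assemble these slotwise values into the single identity $g\cdot o = \GP{e+z}{\overline e}$. Here $e$ is a tripotent, being a sum of orthogonal tripotents, and $z\in V_0(\b e)$, since each $e_j$ with $j\notin S$ is strongly orthogonal to every $e_i$ with $i\in S$ and therefore lies in $\bigcap_{i\in S}V_0(e_i) = V_0(\b e)$.

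It remains to transport the normal form through the $K$-action. Because $K$ consists of the unitary automorphisms $h = h^{-*}$ of $(V,V')$, the element $k$ sends tripotents to tripotents, satisfies $k^{-\#}\overline w = \overline{kw}$ (a short computation from $(kx|z)=(x|k^{-1}z)$ and $(\,|\,) = \tau(\,\cdot\,,\overline{\,\cdot\,})$), and carries Peirce spaces to Peirce spaces of the image tripotent. Since $k\cdot\GP{x}{a} = \GP{kx}{k^{-\#}a}$, we obtain $p = k\cdot\GP{e+z}{\overline e} = \GP{ke + kz}{\overline{ke}}$, where $ke$ is again a tripotent and $kz\in V_0(ke)$; this is precisely the asserted representative with $\b e = (ke,\overline{ke})$ tripotent, and the weaker statement with a general idempotent follows a fortiori. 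The main obstacle is the middle step: carrying out the $\mf{sl}_2$-computation and, above all, checking that the slotwise values---the finite slots contributing a translation by $z$ and the $\tfrac\pi2$-slots contributing the Cayley-type passage to the chart $\iota_{\overline e}$---combine into the \emph{single} class $\GP{e+z}{\overline e}$ rather than merely into a composite of transformations.
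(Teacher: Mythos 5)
Your proposal is correct, but it follows a genuinely different route from the paper. The paper's own proof is essentially a citation: the existence of a representative $\GP{e+z}{e'}$ with $\b e$ idempotent and $z\in V_0(\b e)$ is taken from Loos's purely algebraic theory of projective spaces of Jordan pairs (\cite{Lo94}, Theorem~3.8, Proposition~4.6, Theorem~4.7), and the tripotent refinement is then deduced from the fact that every Peirce $2$-space of an idempotent is also the Peirce $2$-space of a tripotent. You instead use the compact symmetric space structure: the Cartan decomposition $U=K\exp(\fa)K$ with $\fa=\bigoplus_{j}\RR\,\xi_j$, $\xi_j=e_j+q_{\overline e_j}$, an $\mathfrak{sl}_2$-computation in each slot, and transport of the normal form by $K$ (your identity $k^{-\#}\overline w=\overline{kw}$ for unitary automorphisms is correct). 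This buys the tripotent statement in one stroke, with no idempotent-to-tripotent conversion, and it even produces $z$ together with its spectral decomposition $\sum_{j\notin S}\tan(t_j)e_j$, which is exactly the refined form the paper uses later in Theorem~\ref{thm:ExplicitMomentMap} and Theorem~\ref{T: reducedspaces}; the paper's route, on the other hand, is shorter and stays inside Jordan theory, where Loos's results hold in greater algebraic generality. Two points in your sketch deserve attention. First, your maximality argument for $\fa$ presupposes the classical fact that the symmetric-space rank of $X$ equals the Jordan rank $r$; since that fact is usually \emph{proved} by exhibiting such an $\fa$ as maximal abelian, you should either cite it explicitly or verify maximality directly. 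Second, the step you flag as the main obstacle does go through: for $\cos t_j\neq 0$ the $SL_2$-factorization gives $\exp(t_j\xi_j)=\trans{z_j}\,h_j\,\qtrans{v_j}$ with $z_j=-\tan(t_j)e_j$, $h_j=\exp(c_jD_{e_j,\overline e_j})\in L$ and $v_j\in\CC\overline e_j$; since $D_{e_j,\overline e_j}$ annihilates $V_0(e_j)$, and since $\B{w}{v_j}=\Id$ and hence $w^{v_j}=w$ for $w\in V_0(e_j)$ by the Peirce rules, each factor moves a point $\GP{w}{0}$ with $w\in V_0(e_j)$ simply to $\GP{z_j+w}{0}$; induction over the commuting factors then yields $\exp\big(\sum_jt_j\xi_j\big)\cdot\GP{0}{0}=\GP{\sum_jz_j}{0}$ when all slots are finite, and the $\tfrac{\pi}{2}$-slots follow by continuity from $\GP{\sum_{j\in S}s_je_j+z}{0}=\GP{\sum_{j\in S}\tfrac{s_j}{1+s_j}e_j+z}{\overline e}\to\GP{e+z}{\overline e}$ as the $s_j\to\infty$.
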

\begin{proof}
The existence of a representative for $\GP{x}{a}$ of the form $(e+z,e')$ with idempotent $\b e$ and $z\in V_0(\b e)$ is proved in \cite{Lo94}, see Theorem~3.8, Proposition~4.6, and Theorem~4.7 therein. The possibility to choose $\b e$ to be tripotent, follows from the fact that in our setting ($(V,V')$ being finite dimensional and simple), for any idempotent $\b e$ there exists a tripotent $c$ such that $V_2(\b e) = V_2(c)$. Therefore, we may assume that the idempotent $e = (e_+,e_-)$ used in the last part of Proposition~6.5 in \cite{Lo94} is in fact a tripotent.
\end{proof}

\section{Line bundles and representation spaces for $G$}

\subsection{Prequantum bundles on $G/Q$}\label{subsec:PrequantumBundle}
Let $-\lambda \in \fh^*$ be the fundamental weight 
associated to the simple root $\beta_1$. 
Let $\chi_\lambda: Q \rightarrow \C^\times$ be the 
holomorphic character determined by the condition 
$d\chi_\lambda(e)\mid_{\fh}=\lambda$, and let 
$\mathscr{L}:=G \times_Q \C \rightarrow X$ be 
the line bundle associated to $\chi_\lambda$. 
We shall be concerned with the family $H^0(X, \mathscr{L}^k)$, 
for $k \in \N$, of $G$-representations, and with the
decomposition under the group $L$. 
Notice that $K$ is a maximal compact subgroup 
of $L$, so that the decompositions under $L$ amount 
to the decompositions under $K$. 

In order to realize the homogeneous space $X$ as 
a coadjoint $U$-orbit in $\fu^*$ we extend 
the $\C$-linear functional $\lambda \in \fh^*$ to a 
functional on $\g$ by requiring that it annihilate 
all root spaces $\g_\alpha$. 
This convention will hereafter be used for
extending linear functionals on $\fh$ to linear functionals 
on $\g$. Similarly we extend $\R$-linear functionals
on $\ft$ to $\R$-linear functionals on $\fu$. 

Notice that $\lambda$ has imaginary values on 
$\ft$, so that $i\lambda$ restricts to a real-valued 
$\R$-linear functional on $\ft$. We will write 
$i\lambda$ instead of $i\lambda \mid_{\fu}$ for 
the induced $\R$-linear functional on $\fu$.
Using the above conventions, $X$ can be realized as 
the coadjoint orbit, $\mathcal{O}_\lambda \subseteq \fu^*$, 
of $i\lambda$. When $\mathcal{O}_\lambda$ is equipped with the 
Kostant-Kirillov symplectic form, $\omega_\lambda$,  
the action of $U$ is Hamiltonian with moment map 
\begin{equation*}
\mu: \mathcal{O}_\lambda \rightarrow \fu^*
\end{equation*}
being the inclusion $\mathcal{O}_\lambda \subseteq \fu^*$. 
The action of the subgroup $K$ is also Hamiltonian, with 
moment map
\begin{equation*}
\mu_\fk: \mathcal{O}_\lambda \rightarrow \fk^*, \quad \mu_\fk(f)(x):=f(x), 
\quad f \in \mathcal{O}_\lambda, \quad  x \in \fk.
\end{equation*}
The line bundle $\mathscr{L}$ is a prequantum line bundle 
for the Kostant-Kirillov form on $\mathcal{O}_\lambda$. 
In order to interpret the tensor powers $\mathscr{L}^k$ 
as prequantum line bundles on $X \cong \mathcal{O}_\lambda$, 
we use the natural isomorphism 
$\mathcal{O}_\lambda \cong \mathcal{O}_{k\lambda}, f \mapsto kf$, 
to identify the symplectic manifolds  
$(\mathcal{O}_{k\lambda}, \omega_{k\lambda})$ and
$(\mathcal{O}_\lambda, k\omega_\lambda)$. 
Then $\mathscr{L}^k$ is a prequantum line bundle for 
$(\mathcal{O}_\lambda, k\omega_\lambda)$. 
The moment map for the $K$-action on 
$(\mathcal{O}_\lambda, k\omega_\lambda)$ is 
$$\mu_\fk^k:=k\mu_\fk.$$

\subsection{Trivialization of sections}
Consider the dual space $H^0(X, \mathscr{L})^*$, which
is a highest weight module of highest weight $-\lambda$. 
We make a specific choice of 
a highest weight vector. For this purpose we identify 
the space of holomorphic sections $H^0(X, \mathscr{L})$
with the space of $Q$-equivariant holomorphic functions 
$F: G \rightarrow \C$ 
having the $Q$-equivariance property
\begin{equation*}
F(gq)=\chi_\lambda^{-1}(q)F(g), \quad g \in G, \quad q \in Q.
\end{equation*}
Define $w_1 \in H^0(X, \mathscr{L})^*$ as the linear 
functional $F \mapsto \mbox{ev}_e(F):=F(e)$.
Then $w_1$ is a cyclic vector for 
$H^0(X, \mathscr{L})^*$ as a $U(\g)$-module, as well 
as a $U(\mathfrak{p}^-)$-module.
For $k \in \N$, the vector $w_k:=w_1^{\otimes k} \in 
H^0(X, \mathcal{L}^k)^*$ is then a highest weight 
vector for $H^0(X, \mathscr{L}^k)^*$.
Given these normalizations of highest weight vectors of 
the spaces  $H^0(X, \mathscr{L}^k)^*$ we now
consider local trivializations of the bundles 
$\mathscr{L}^k$.

The principal bundle $q: G \rightarrow G/Q$ is trivial 
over the open set $P^-Q/Q$. Hence
the bundles $\mathscr{L}^k$, being associated to this  
principal bundle, are also trivial over $P^-Q/Q$. 
If we identify a section $\varphi$ of $\mathscr{L}^k$ with 
a linear functional on the dual space $H^0(X, \mathscr{L}^k)^*$, 
then the realization of $\varphi$ as a $Q$-equivariant holomorphic 
function, $F$, on $G$ is given by
$$F(g)=\varphi(g.w_k).$$
The trivialization of a section, viewed as a $Q$-equivariant
holomorphic function $F: G \rightarrow \C$, is given by
the restriction of $F$ to $P^-$. 
In the local coordinates \eqref{E: eta}, the restriction of 
$F$ to $P^-$ is given by 
the function 
\begin{equation}
f(z_1,\ldots, z_n)=\varphi(\exp (z_1F_1+\cdots +z_nF_n).w_k), 
\quad z \in \fp^-.
\label{E: triv}
\end{equation}

\begin{prop} \label{P: monomweight}
Let $s \in H^0(X,\mathscr{L}^k)$ be a 
weight vector of weight $\delta$.
If scalar multiples of the two monomial terms 
$z^a$ and $z^b$ occur in the polynomial that trivializes
$s$, then the identity
\begin{equation*}
k\lambda+\sum_{j=1}^n a_j\alpha_j
=k\lambda+\sum_{j=1}^n b_j\alpha_j.
\end{equation*}
holds in the weight lattice.
Moreover, $\delta=k\lambda+\sum_{j=1}^n a_j\alpha_j$.
\end{prop}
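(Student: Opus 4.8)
The plan is to expand the trivialization \eqref{E: triv} into a power series and then read off, weight space by weight space, which monomials can occur. The essential structural input is that $\fp^-$ is abelian, so that the root vectors $F_1,\ldots,F_n$ spanning $\fp^-$ commute. Consequently, for $\xi:=z_1F_1+\cdots+z_nF_n$ the group element $\exp(\xi)\in P^-\subseteq G$ acts on $w_k$ with a power series expansion that has no ordering ambiguity,
\begin{equation*}
\exp(\xi).w_k=\sum_{a\in\N^n}\frac{z^a}{a!}\,F^a.w_k,
\end{equation*}
where $F^a:=F_1^{a_1}\cdots F_n^{a_n}$, $z^a:=z_1^{a_1}\cdots z_n^{a_n}$, and $a!:=a_1!\cdots a_n!$. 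Substituting this into \eqref{E: triv} and invoking the uniqueness of Taylor coefficients, the coefficient of the monomial $z^a$ in the trivializing polynomial $f$ equals $\tfrac{1}{a!}\langle F^a.w_k,\,s\rangle$, where I write $\langle\,\cdot\,,\,\cdot\,\rangle$ for the canonical pairing of $H^0(X,\mathscr{L}^k)^*$ with $H^0(X,\mathscr{L}^k)$ and identify the section $s$ with the functional $\varphi$ it represents.

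Next I would track weights. Since $w_k$ is a weight vector of weight $-k\lambda$ and each $F_j=E_{-\alpha_j}$ has $\fh$-weight $-\alpha_j$, the vector $F^a.w_k$ lies in the weight space of $H^0(X,\mathscr{L}^k)^*$ of weight $-k\lambda-\sum_{j=1}^n a_j\alpha_j$. The pairing $\langle\,\cdot\,,\,\cdot\,\rangle$ is $G$-invariant, hence invariant under the maximal torus, so a weight vector of weight $\mu$ in the dual module pairs nontrivially with $s$ only if $\mu=-\delta$, where $\delta$ is the weight of $s$. Therefore the monomial $z^a$ occurs in $f$ with nonzero coefficient only if
\begin{equation*}
-k\lambda-\sum_{j=1}^n a_j\alpha_j=-\delta,
\end{equation*}
which is precisely the asserted identity $\delta=k\lambda+\sum_{j=1}^n a_j\alpha_j$. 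Applying this to both surviving monomials $z^a$ and $z^b$ yields $k\lambda+\sum_{j=1}^n a_j\alpha_j=\delta=k\lambda+\sum_{j=1}^n b_j\alpha_j$, the first claim.

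There is no deep obstacle here: the content is the observation that distinct monomials in the trivialization record distinct weight spaces of the dual module, and that the $G$-invariant pairing is orthogonal across weights whose sum is nonzero. The two points requiring genuine care are (i) justifying the clean, ordering-free exponential expansion, which rests squarely on $\fp^-$ being abelian so that $F^a$ is well defined; and (ii) keeping the sign conventions of the contragredient action and the weight $-k\lambda$ of $w_k$ consistent, so that the final identity comes out with $+k\lambda$ rather than $-k\lambda$. I would therefore state the weight-orthogonality of the invariant pairing \emph{explicitly}, which makes the bookkeeping in (ii) transparent and reduces the whole statement to the two displayed computations above.
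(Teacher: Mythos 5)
Your proof is correct and follows essentially the same route as the paper's: expand the trivialization \eqref{E: triv} as the power series $\sum_{c} \tfrac{1}{c_1!\cdots c_n!} z^c \varphi(F^c.w_k)$, observe that each $F^c.w_k$ is a weight vector of weight $-k\lambda-\sum_{j} c_j\alpha_j$, and use that the weight-$\delta$ functional $\varphi$ pairs nontrivially only with dual weight vectors of weight $-\delta$. Your additional remarks, that the abelianness of $\fp^-$ removes any ordering ambiguity in $F^a$ and that weight-orthogonality of the invariant pairing is what forces the coefficient to vanish, simply make explicit what the paper leaves implicit.
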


\begin{proof}
The holomorphic section $s$ corresponds to a
linear functional $\varphi$ on the space 
$H^0(X,\mathscr{L}^k)^*$. The trivialization of $s$  
is then given by the polynomial function
\begin{align*}
f(z_1,\ldots, z_n)&=\varphi(\exp (z_1F_1+\cdots +z_nF_n).w_k)\\
&=\sum_{c \in \N_0^n} \frac{1}{c_1!\cdots c_n!} 
z^c\varphi(F^c.w_k),
\end{align*}
where $F^c:=F_1^{c_1}\cdots F_n^{c_n}$.
Since each vector $F^c.w_k$ is a weight vector (of
weight $-k\lambda-\sum_{j=1}^nc_j\alpha_j$), the functional $\varphi$ 
can only have nonzero values on 
vectors $F^c.w_k$ of the fixed weight $-\delta$. 
\end{proof}

We now define a \emph{valuation-like function} 
\begin{equation*}
v: \bigsqcup_{k \in \N} H^0(X, \mathscr{L}^k) \setminus \{0\}
\rightarrow \N_0^n,
\end{equation*}
i.e., a function satisfying the properties
\begin{align*}
&v(st)=v(s)+v(t), \quad s 
\in H^0(X, \mathscr{L}^k) \setminus \{0\},
\,t\in H^0(X, \mathscr{L}^\ell) \setminus \{0\},\\
&v(s+t) \geq \mbox{min}(v(s),v(t)), \quad s,t \in 
H^0(X, \mathscr{L}^k) \setminus \{0\}, \,\,s+t \neq 0,\\
&v(\lambda s)=v(s), \quad s \in H^0(X, \mathscr{L}^k) 
\setminus \{0\}, \lambda \in \C^\times.
\end{align*}
If $s \in H^0(X, \mathscr{L}^k) \setminus \{0\}$ 
is trivialized as the polynomial $f=\sum_{a \in \N_0^n}c_az^a$, 
let
\begin{equation} \label{E: valdef}
v(s):=\text{min}\{a \in \N_0^n \mid c_a \neq 0\},
\end{equation}
where the minimum refers to the inverse lexicographic order 
on $\Z^n$.
\begin{rem}
The function $v$, although it seems to depend 
on the particular local coordinates chosen, has a global
geometric meaning. In fact, $v$ can be interpreted as defining the 
``successive orders of vanishing'' of $s$ 
along a flag of irreducible subvarieties
$X_0 \subseteq \ldots \subseteq X_{n-1} \subseteq X_n=X$, 
where $\mbox{dim} X_i=i$, in the setting defined by 
Okounkov (\cite{Ok}), and later developed by 
Lazarsfeld-Musta{\c{t}}{\u{a}} (\cite{LM09}). This interpretation 
will however not play any role in the rest of this paper.
Instead, the particular local expression \eqref{E: valdef} 
will be useful.
\end{rem}
To the function $v$ we attach the semigroup
\begin{equation}
S(\mathscr{L}, N^+_L, v):=
\{(k,v(s)) \mid s \in H^0(X,\mathscr{L}^k)^{N^+_L} \setminus \{0\}\} 
\subseteq \N \times \N_0^{n}, \label{E: semigroup}
\end{equation}
and the closed convex cone, $C(\mathscr{L}, N^+_L, v)$, in 
$\R \times \R^n$ which is generated by 
the semigroup $S(\mathscr{L}, N^+_L, v)$.  
Finally, we define the \emph{Okounkov body}, 
\begin{equation}
\Delta(\mathscr{L}, N^+_L,v):=C(\mathscr{L}, N^+_L, v) 
\cap (\{1\} \times \R^n). \label{E: okounkov}
\end{equation}

\section{$L$-types and $L$-orbits in $X$}
Recall that $L$ is the identity component of the automorphism group of the Jordan pair $(V,V')$.
We give a geometric proof of the decomposition of $H^0(X,\linebundle)$ into irreducible $L$-modules. More precisely, we claim that the decomposition is obtained by restriction to the closed $L$-orbits in $X=G/Q$. 

\subsection{Closed $L$-orbits}
Recall that the set $\Idem\subseteq V\times V'$ of idempotents decomposes into the subsets of constant rank idempotents,
\[
	\Idem = \bigcupdot_{k=0}^r\Idem_k\quad\text{with}\quad
	\Idem_k = \Set{\b e\in\Idem}{\rank\b e = k}\;,
\]
where $r$ denotes the rank of the Jordan pair $(V,V')$. Assuming that $(V,V')$ is simple (or equivalently, that the Hermitian symmetric space $X$ is irreducible), it is well-known that the $\Idem_k$ are the connected components of $\Idem$ (with respect to the induced topology from $V\times V'$), and that the $L$-action on $V\times V'$ restricts to a transitive $L$-action on each component $\Idem_k\subseteq V\times V'$, cf.\ \cite[\S 17.1]{Lo75}. Let $N_k$ be the dimension of the Peirce $2$-space $\dim V_2(\b e)$ for some idempotent $\b e\in\Idem_k$, which is independent of the choice of $\b e$, and let $\Gr_{N_k}(V)$ be the (classical) Grassmannian manifold of $N_k$-dimensional subspaces in $V$, equipped with the natural $L$-action induced by the $L$-action on $V$. Consider the subset
\[
	\Peirce_k
		:=\Set{U\subseteq V}{U=V_2(\b e)\text{ for some $\b e\in\Idem_k$}}\subseteq\Gr_{N_k}(V)\;.
\]
Since $V_2(h\b e) = hV_2(\b e)$ for any $h\in L$ and $\b e\in\Idem_k$, the map
\[
	\pi_k:\Idem_k\to\Peirce_k,\ \b e\mapsto V_2(\b e)
\]
is $L$-equivariant, and hence $\Peirce_k$ is an $L$-orbit in $\Gr_{N_k}(V)$. In particular, it is a smooth projective variety, called the \emph{Peirce variety} of rank $k$. We call two idempotents $\b e,\b c\in\Idem_k$ \emph{Peirce equivalent}, if $\pi_k(\b e) = \pi_k(\b c)$, i.e., if their Peirce $2$-spaces in $V$ coincide.

We show that the Peirce varieties $\Peirce_k$ can be imbedded $L$-equivariantly into the compact Hermitian symmetric space $X=G/Q$. Recall the Jordan-theoretic description of $X$ via \emph{projective equivalence}, $X\cong (V\times V')/R$. The following is a crucial observation due to O.~Loos, cf.\ \cite[\S2.6]{Lo94}. 

\begin{lemma}\label{lem:EqualEquivalence}
	Two idempotents $\b e,\;\b c\in\Idem_k\subseteq V\times V'$ are Peirce equivalent	if and 
	only if they are projectively equivalent, i.e., $V_2(\b e) = V_2(\b c)$ if and only if 
	$\GP{e}{e'} = \GP{c}{c'}$.
\end{lemma}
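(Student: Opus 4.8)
The plan is to prove the two implications separately, after translating each condition into Jordan-theoretic terms. Recall that $V_2(\b e) = [e] = Q_eV'$ is the principal inner ideal generated by $e$, so Peirce equivalence is the statement $[e]=[c]$. By the definition of the relation $R$ in \eqref{eq:ElemEquivRel}, projective equivalence $\GP{e}{e'}=\GP{c}{c'}$ means precisely that $(e,e'-c')$ is quasi-invertible and $c = e^{e'-c'}$. Both conditions are symmetric in $\b e$ and $\b c$: for the first this is clear, and for the second it follows from the standard facts (cf.\ \cite{Lo75}) that $(x,y)$ is quasi-invertible if and only if $(x^y,-y)$ is, and that then $(x^y)^{-y}=x$, which together give $(c,c'-e')$ quasi-invertible with $e = c^{c'-e'}$.

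For the implication projective $\Rightarrow$ Peirce, set $y:=e'-c'$. The key observation is that the Bergman operator $\B{e}{y}$ maps $V_2(\b e)$ into itself. Indeed, for $w\in V_2(\b e)$ we have $\B{e}{y}w = w - \JTP{e}{y}{w} + Q_eQ_yw$; here $Q_eQ_yw\in Q_eV' = V_2(\b e)$, while by the Peirce rule $\JTP{V_2}{V'_j}{V_2}\subseteq V_{4-j}$ only the $V'_2(\b e)$-component of $y$ contributes to $\JTP{e}{y}{w}$, which therefore also lies in $V_2(\b e)$. Since $(e,y)$ is quasi-invertible, $\B{e}{y}$ is invertible, and as $V_2(\b e)$ is finite-dimensional the restriction $\B{e}{y}|_{V_2(\b e)}$ is a bijection of $V_2(\b e)$; in particular $\B{e}{y}^{-1}$ also preserves $V_2(\b e)$. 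As $e-Q_ey\in V_2(\b e)$, we conclude that $c = e^{y} = \B{e}{y}^{-1}(e-Q_ey)\in V_2(\b e)$. Because $V_2(\b e)$ is an inner ideal, this gives $[c]=Q_cV'\subseteq V_2(\b e) = [e]$; the symmetric argument yields $[e]\subseteq[c]$, hence $[e]=[c]$.

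For the converse Peirce $\Rightarrow$ projective — which I expect to be the main obstacle — assume $V_2(\b e)=V_2(\b c)=:W$. First I would prove \emph{completion independence}: if $e'$ and $c'$ are two completions of the \emph{same} element $e$, then $\GP{e}{e'}=\GP{e}{c'}$. Here $Q_ee'=Q_ec'=e$, and since $Q_e$ annihilates $V'_1(\b e)\oplus V'_0(\b e)$ and restricts to an isomorphism $V'_2(\b e)\to V_2(\b e)$, one gets $y:=e'-c'\in V'_1(\b e)\oplus V'_0(\b e)$. For such $y$ the vanishing rule $\JTP{V_2}{V'_0}{V}=\{0\}$ together with the Peirce rules shows $\B{e}{y}=\Id+N$ with $N$ strictly raising the Peirce degree and annihilating $V_2(\b e)$, hence nilpotent; thus $\B{e}{y}$ is unipotent, $(e,y)$ is quasi-invertible, and $e^{y}=\B{e}{y}^{-1}(e-Q_ey)=e$, i.e.\ $\GP{e}{e'}=\GP{e}{c'}$.

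It then remains to compare idempotents with \emph{different} first entries but the same $W$. Using the observation (cf.\ the proof of Proposition~\ref{prop:Representatives} and \cite{Lo94}) that there is a tripotent $t$ with $V_2(t)=W$, and that every full-rank element of $W$ is invertible in the Jordan algebra $(W,\circ_t)$, I would pick for $\b e$ the completion $e':=\overline{e^{-1}}\in V'_2(t)=\overline W$ furnished by the Jordan algebra inverse, and likewise for $\b c$, thereby localizing all the data to the subpair $(W,\overline W)$. There the assertion reduces to the \emph{top-rank} statement that all full-rank idempotents define one and the same point of the associated symmetric space, which follows from the relation between Jordan algebra inverses and quasi-inverses used in Lemma~\ref{lem:JordanDeterminants} (concretely $t^{\,\overline t-\overline{e^{-1}}}=e$); combined with completion independence this gives $\GP{e}{e'}=\GP{t}{\overline t}=\GP{c}{c'}$. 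The hard part will be checking that $(W,\overline W)$ is a subpair on which these computations localize correctly and that $e'=\overline{e^{-1}}$ is indeed a completion; alternatively one may invoke Loos \cite[\S2.6]{Lo94} directly.
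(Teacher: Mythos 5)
The paper never proves this lemma at all --- it is quoted as ``a crucial observation due to O.~Loos'' with the citation \cite[\S2.6]{Lo94} --- so your argument has to stand on its own rather than be compared with an in-paper proof. Most of it does stand: the implication projective $\Rightarrow$ Peirce is correct (for $y=e'-c'$ the Peirce rules show $\B{e}{y}$ preserves $V_2(\b e)$, hence so does $\B{e}{y}^{-1}$, so $c=e^{y}\in V_2(\b e)$; since $V_2(\b e)=[e]$ is an inner ideal this gives $[c]\subseteq [e]$, and symmetry of $R$ gives equality), and so is your completion-independence step ($Q_e c'=Q_e e'=e$ forces $y=e'-c'$ to have vanishing $V_2'(\b e)$-component, whence $\B{e}{y}=\Id+N$ with $N$ strictly raising the Peirce grading and killing $V_2(\b e)$, so $\B{e}{y}$ is unipotent and $e^{y}=e$).

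The genuine gap is in the final step, precisely at the point you flagged: $(e,\overline{e^{-1}})$ is in general \emph{not} an idempotent, because the involution is antilinear whereas the completion of an invertible element of the subpair $(V_2(t),V_2'(t))$ is forced by $Q_ee'=e$ to be the linear-algebraic expression $e'=Q_e^{-1}e$. Already in the rank-one pair $(\CC,\CC)$ with $Q_xy=x^2y$ and $t=1$, the unique completion of $e\neq 0$ is $1/e$, while $\overline{e^{-1}}=1/\overline{e}$; moreover your identity $t^{\overline{t}-\overline{e^{-1}}}=e$ evaluates there to $\overline{e}$, not $e$. So for non-real $e$ the step is false as written. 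The repair is local: take instead $e':=Q_{\overline{t}}(e^{-1})=Q_e^{-1}e$, which makes sense because $Q_e\colon V_2'(t)\to V_2(t)$ is bijective (indeed $Q_e$ kills $V_1'(t)\oplus V_0'(t)$, so $Q_eV_2'(t)=Q_eV'=[e]=V_2(t)$). With this choice $(e,e')$ is an idempotent, and $t^{\overline{t}-e'}=e$ is exactly the unital-Jordan-algebra identity $1^{1-x^{-1}}=x$ transported to $(V_2(t),V_2'(t))$ via $Q_t$ on the second component --- the same inverse/quasi-inverse relation invoked in Lemma~\ref{lem:JordanDeterminants}. You also still need the standard fact (in \cite{Lo75}) that for pairs in $V_2(t)\times V_2'(t)$ quasi-invertibility in the subpair implies quasi-invertibility in $(V,V')$ with the same quasi-inverse, so that the equalities you establish inside the subpair really hold in $X$. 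With those two corrections your scheme --- reduce both idempotents to the common tripotent $t$ by completion independence --- does prove the lemma.
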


Applying this lemma, it follows that the map
\[
	\iota_k:\Peirce_k\to X,\ U\mapsto \GP{e}{e'}\quad\text{for}\quad
	U=V_2(\b e)\text{ with $\b e = (e,e')$}
\]
is well-defined and one-to-one. We may illustrate the situation in the following commutative diagram
\begin{equation}\label{diag:PeirceImbed}
	\begin{tikzpicture}	[baseline=(current bounding box.center),>=angle 90]
		\matrix [column sep = 1.5cm, row sep = 1cm, text height=1.5ex, text depth=0.25ex]
		{
			\node [anchor=base] (I) {$\Idem_k$};   & \node [anchor=base] (V) {$V\times V'$}; \\
			\node [anchor=base] (P) {$\Peirce_k$}; & \node [anchor=base] (X) {$X$}; \\
		};
		\draw [->>] (I) to node [auto,swap] {$\pi_k$} (P);
		\draw [right hook->] (I) to node [auto] {id} (V);
		\draw [right hook->] (P) to node [auto] {$\iota_k$} (X);
		\draw [->>] (V) to node [auto] {$\pi$} (X);
		\node at (2,-0.9) {.};
	\end{tikzpicture}
\end{equation}
Moreover, since $h\b e = (he,h^{-\#}e')$ and $h\GP{e}{e'} = \GP{he}{h^{-\#}e'}$, this diagram is also $L$-equivariant, and in particular $\iota_k$ is an $L$-equivariant isomorphism onto its image
\[
	X_k:=\iota_k(\Peirce_k)\subseteq X\;,
\]
which is a closed $L$-orbit in $X$. In this way, we have identified $r+1$ different $L$-orbits in $X$. Recalling a classical result, that there are precisely $r+1$ closed $L$-orbits in $X$, we summarize the arguments of this section.

\begin{prop}\label{prop:LOrbits}
	The closed $L$-orbits in $X$ are precisely the complex analytic submanifolds
	\[
		X_k = \Set{\GP{e}{e'}}{(e,e')\in\Idem_k}\subseteq X
	\]
	for $k = 0,\ldots,r$. The $L$-orbit $X_k$ is $L$-equivariantly isomorphic via $\iota_k$ to the 
	Peirce variety $\Peirce_k$.
\end{prop}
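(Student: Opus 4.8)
The plan is to observe that nearly all the work is already contained in the discussion preceding the statement, and to isolate the two facts that remain. The construction above produces, for each $k=0,\ldots,r$, an $L$-equivariant isomorphism $\iota_k\colon\Peirce_k\to X_k$ onto the subset $X_k=\Set{\GP{e}{e'}}{(e,e')\in\Idem_k}$, and shows, via the commutative diagram \eqref{diag:PeirceImbed}, that $X_k$ is a closed $L$-orbit. Hence it only remains to prove (i) that the orbits $X_0,\ldots,X_r$ are pairwise distinct, and (ii) that they exhaust the closed $L$-orbits in $X$.

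For (i) I would use that two $L$-orbits are either equal or disjoint, so it suffices to rule out $X_k=X_{k'}$ for $k\neq k'$. If $\GP{e}{e'}=\GP{c}{c'}$ with $\b e\in\Idem_k$ and $\b c\in\Idem_{k'}$, then the full form of Loos's result behind Lemma \ref{lem:EqualEquivalence} identifies the Peirce $2$-spaces, $V_2(\b e)=V_2(\b c)$; in particular $N_k=\dim V_2(\b e)=\dim V_2(\b c)=N_{k'}$. Since passing from a rank $k$ idempotent $\b e$ to a rank $k+1$ idempotent $\b e+\b f$ (with $\b f$ a primitive idempotent orthogonal to $\b e$) strictly enlarges the Peirce $2$-space, the assignment $k\mapsto N_k$ is strictly increasing, forcing $k=k'$.

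For (ii) the quickest route is to invoke the classical fact that $X=G/Q$ carries exactly $r+1$ closed $L$-orbits; together with (i) this forces $\{X_0,\ldots,X_r\}$ to be the complete list. I would prefer, however, a self-contained degeneration argument. By Proposition \ref{prop:Representatives} every point of $X$ is of the form $\GP{e+z}{e'}$ with $\b e=(e,e')\in\Idem$ and $z\in V_0(\b e)$. The element $D_{e,e'}\in\mf l$ scales $V_j(\b e)$ by $t^j$, the dual action scales $V'_j(\b e)$ by $t^{-j}$, and the central one-parameter subgroup $s\mapsto s\,\Id_V$ of $L$ (recall $\mf z(\mf k)=\RR(i\Id_V)$) acts on $\GP{x}{a}$ by $\GP{sx}{s^{-1}a}$; combining these one computes
\begin{equation*}
	(s\,\Id_V)\exp\!\big((\log t)D_{e,e'}\big)\cdot\GP{e+z}{e'}
	=\GP{e+t^{-2}z}{e'}\quad\text{for}\quad s=t^{-2}.
\end{equation*}
Letting $t\to\infty$ inside the single chart $x\mapsto\GP{x}{e'}$ shows that $\GP{e}{e'}$ lies in the closure of the $L$-orbit through $\GP{e+z}{e'}$. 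Consequently, if that orbit is closed it must contain $\GP{e}{e'}$ and therefore equal $X_k$ with $k=\rank\b e$; granting that this is impossible when $z\neq 0$, every closed orbit has $z=0$ and hence coincides with some $X_k$, which together with the closedness of the $X_k$ established above yields (ii) directly.

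The main obstacle is precisely this last point in (ii): extracting from Proposition \ref{prop:Representatives} (equivalently from \cite{Lo94}) that the representative $\GP{e+z}{e'}$ is canonical enough that the $V_0$-component $z$ is a genuine invariant of the point, vanishing exactly on the classes coming from idempotents, so that $\GP{e+z}{e'}$ with $z\neq 0$ is never projectively equivalent to some $\GP{c}{c'}$, $\b c\in\Idem_k$. Everything else is formal bookkeeping built on the diagram \eqref{diag:PeirceImbed}, and if one is content to cite the classical count of closed orbits the degeneration argument can be omitted entirely.
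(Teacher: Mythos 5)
Your primary route---taking the preceding construction of the $\iota_k$ and diagram \eqref{diag:PeirceImbed} as given, checking that the orbits $X_0,\ldots,X_r$ are pairwise distinct, and then citing the classical fact that $X$ has precisely $r+1$ closed $L$-orbits---is correct and is essentially the paper's own proof, which consists of exactly this discussion plus that citation (the paper does not even spell out distinctness, which your strictly increasing dimension count $N_k$ handles cleanly). Your optional degeneration argument goes beyond the paper, and since you explicitly flag its unproved step (that $z\neq 0$ forces $\GP{e+z}{e'}\notin X_k$, i.e., that the $V_0$-component is a genuine invariant of the projective equivalence class) and state that it can be omitted in favour of the classical count, it does not constitute a gap in the proposal.
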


\subsection{Restriction isomorphism}
In the following, we use the notation of diagram \eqref{diag:PeirceImbed} and consider the restrictions of the fundamental line bundle $\linebundle$ on $X$ to the closed $L$-orbits $(X_k)_{k=0,\ldots,r}$ that are described in Proposition~\ref{prop:LOrbits}. Due to \cite[Proposition~3.1]{Lo78}, the projection $\pi$ trivializes the line bundle $\linebundle$, i.e., $\pi^*\linebundle$ is trivial, and sections in $\linebundle$ can be identified with holomorphic maps on $V\times V'$ satisfying a certain cocycle condition, i.e.,
\[
	H^0(X,\linebundle)\cong \Set{f:V\times V'\to\CC\text{ hol.}}
	{\begin{aligned}
		&f(x,y) = \Delta(x,y-y')\cdot f(x^{y-y'},y')\\
		&\text{for all quasi-invertible $(x,y-y')$}
	\end{aligned}}.
\]
Let $\linebundle|_{X_k}$ denote the restriction of the line bundle to the closed $L$-orbit $X_k\subseteq X$. By compactness of $X_k$, the space $H^0(X_k,\linebundle|_{X_k})$ is finite dimensional. Moreover, $L$ acts irreducibly. Due to Proposition~\ref{prop:LOrbits} and the commutativity of \eqref{diag:PeirceImbed}, we obtain the identifications
\begin{align*}
	H^0(X_k,&\linebundle|_{X_k})
	\cong H^0(\Peirce_k,\iota_k^*\linebundle)\\
	&\cong\Set{f:\Idem_k\to\CC\text{ hol.}}
	{\begin{aligned}
		&f(e,e') = \Delta(e,e'-c')f(c,c')\\
		&\text{for all Peirce equivalent $(e,e')$, $(c,c')$}
	\end{aligned}}\;.
\end{align*}
We note that due to Lemma~\ref{lem:JordanDeterminants}, $\Delta(e,e'-c') = \Delta_{\b e}'(c')$ for $\b e=(e,e')$, which is the (opposite) Jordan algebra determinant defined by the idempotent $\b e$. The main result of this section asserts, that the decomposition of $H^0(X,\linebundle)$ into $L$-types is obtained by restricting the sections to the closed $L$-orbits.
\begin{thm}\label{thm:Decomposition}
	The restriction map
	\begin{align}\label{eq:LTypeDecomposition}
		\rho: H^0(X,\linebundle)\to\bigoplus_{k=0}^r H^0(X_k,\linebundle|_{X_k}),\
		f\mapsto
		(f|_{X_0},\ldots,f|_{X_r})
	\end{align}
	is an isomorphism of $L$-modules.
\end{thm}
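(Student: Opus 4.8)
The plan is to prove that $\rho$ is an isomorphism of $L$-modules by establishing that it is both injective and surjective, using the fact that both sides decompose into $L$-types and that the representation-theoretic structure is rigid enough to force a dimension count. First I would observe that $\rho$ is manifestly $L$-equivariant, since each restriction map $f\mapsto f|_{X_k}$ is $L$-equivariant (the $X_k$ being $L$-stable by Proposition~\ref{prop:LOrbits}), so it only remains to check bijectivity. Since $L$ acts irreducibly on each summand $H^0(X_k,\linebundle|_{X_k})$, the target is a direct sum of $r+1$ irreducible $L$-modules, and the main content is to show these are pairwise inequivalent and that $H^0(X,\linebundle)$ decomposes as exactly their sum.

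For injectivity, I would use the Jordan-theoretic model $X\cong(V\times V')/R$ together with Proposition~\ref{prop:Representatives}: every point of $X$ has a representative of the form $\GP{e+z}{e'}$ with $\b e=(e,\overline e)$ a tripotent and $z\in V_0(\b e)$. The closed orbit $X_r$ (the top rank) should be dense, or at least the union $\bigcup_k X_k$ should contain enough of $X$ to separate sections. Concretely, a section $f\in H^0(X,\linebundle)$ vanishing on all $X_k$ should be forced to vanish identically. I would try to show that $X_r$ is Zariski-dense in $X$ — the idempotents of full rank $r$ correspond to invertible elements, which form an open dense subset — so that $f|_{X_r}=0$ already forces $f=0$ by the identity principle for the holomorphic cocycle description. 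Thus injectivity may follow just from restriction to the single top-dimensional orbit.

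For surjectivity, the cleanest route is a dimension argument: I would identify the highest weights occurring in $H^0(X,\linebundle)$ and match them with those of the $H^0(X_k,\linebundle|_{X_k})$. Using the cocycle descriptions displayed just before the theorem, sections of $\linebundle$ over $X$ are holomorphic $f:V\times V'\to\CC$ satisfying the transformation law involving $\Delta(x,y-y')$, while sections over $X_k\cong\Peirce_k$ satisfy the analogous law involving $\Delta_{\b e}'$. I would show that each irreducible $L$-constituent $W_k$ of $H^0(X,\linebundle)$ restricts nontrivially to exactly one orbit $X_k$, giving a section of the restriction map $W_k\to H^0(X_k,\linebundle|_{X_k})$ as announced in the introduction; since $L$ acts irreducibly on the target and the map is nonzero, it is an isomorphism of $L$-modules by Schur's lemma. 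Combined with injectivity and the classical count of $r+1$ closed orbits, this forces $H^0(X,\linebundle)=\bigoplus_{k=0}^r W_k$ with $\rho$ restricting to an isomorphism on each factor.

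The hard part will be the surjectivity, specifically producing for each $k$ a genuine global section of $\linebundle$ on $X$ whose restriction to $X_k$ is a prescribed (highest weight) section while its restrictions to the other orbits are controlled. The obstruction is that the $X_k$ are nested in a nontrivial way and the cocycle/transformation law couples the behavior across different Peirce strata; extending a section from $X_k$ to all of $X$ without introducing poles requires exhibiting an explicit holomorphic extension, and the natural candidate built from the Jordan algebra determinant $\Delta_{\b e}'$ must be checked to satisfy the global cocycle condition defining $H^0(X,\linebundle)$. I expect that verifying this cocycle compatibility — equivalently, that the extension is well-defined independently of the chosen representative $\GP{e}{e'}$ of a point in $X_k$ — is where the real work lies, and it is precisely here that Lemma~\ref{lem:EqualEquivalence} and Lemma~\ref{lem:JordanDeterminants} should be invoked to reconcile the Jordan pair determinant on $X$ with the Jordan algebra determinants on the orbits.
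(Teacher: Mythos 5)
Your injectivity argument rests on a false premise: the top closed orbit $X_r$ is \emph{not} Zariski-dense in $X$. By Proposition~\ref{prop:LOrbits}, $X_r$ is a closed $L$-orbit, $L$-equivariantly isomorphic to the Peirce variety $\Peirce_r$; it is therefore a compact, proper, closed subvariety of $X$, and a proper closed subvariety is never dense. Your heuristic conflates elements of maximal rank in $V$ (which are indeed dense in $V$) with \emph{idempotents} of maximal rank: $\Idem_r$ is a proper subvariety of $V\times V'$, and in fact every point $\GP{e}{e'}$ with $(e,e')\in\Idem_k$, $k\geq 1$, lies outside the affine chart $V\subseteq X$, because $\B{e}{e'}$ vanishes on $V_2(\b e)$, so $(e,e')$ is never quasi-invertible. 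The simplest counterexample is $X=\mP^n$ (rank $r=1$): there $X_1$ is the hyperplane at infinity, and the section cutting it out vanishes identically on $X_1$ without being zero. More structurally, since the summands $H^0(X_k,\linebundle|_{X_k})$ are irreducible with pairwise distinct highest weights (Proposition~\ref{P: niceterm}), Schur's lemma forces every constituent $W_j$ with $j<r$ to restrict to zero on $X_r$; the kernel of restriction to $X_r$ alone is thus $\bigoplus_{j<r}W_j$, i.e., almost all of $H^0(X,\linebundle)$. This is precisely why the paper must restrict to \emph{all} $r+1$ closed orbits, and why its injectivity proof is genuinely more involved: a double induction (on $r-\rank\b e$ and on $\rank\b c$) over representatives $\GP{e+c}{e'}$ furnished by Proposition~\ref{prop:Representatives}, using the cocycle condition together with an entire-function argument to propagate vanishing from the orbits to all of $X$.

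Your surjectivity sketch is closer in spirit to the paper but is incomplete and partly circular. The claim that each constituent restricts nontrivially to exactly one orbit requires Schur's lemma, distinctness of the target weights, \emph{and} injectivity of $\rho$ (which your argument does not supply); and the proposed ``dimension count'' of highest weights of $H^0(X,\linebundle)$ is essentially the branching law that the theorem is meant to establish, so invoking it either begs the question or imports Schmid's classical result, defeating the purpose of the paper's self-contained geometric proof. The two concrete ingredients that make the paper's argument work, and which are missing from your proposal, are: (i) the explicit global sections $f(x,y)=\Delta(x,y-c')$ attached to an idempotent $\b c=(c,c')$, which satisfy the cocycle condition by the identity $\Delta(u,v)\Delta(u^v,w)=\Delta(u,v+w)$; and (ii) Lemma~\ref{lem:RankConditions}, which gives the triangular vanishing pattern $f|_{\Idem_\ell}=0\iff\ell>\rank\b c$. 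With these, one surjects onto $\sH_{r-k}\oplus\cdots\oplus\sH_r$ by induction on $k$, applying Schur's lemma one component at a time — no prior knowledge of the decomposition of $H^0(X,\linebundle)$, and no inequivalence of the summands, is needed. You did correctly guess that the candidate sections come from Jordan determinants and that well-definedness under projective equivalence is the point to check, but without (i) and (ii) the induction has no engine.
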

\begin{proof}
For convenience, we set $\sH:=H^0(X,\linebundle)$ and $\sH_k:=H^0(X_k,\linebundle|_{X_k})$. Since $X_k$ are closed $L$-orbits, each $\sH_k$ is an irreducible representation of $L$, and $\rho$ is an $L$-equivariant map. To show that $\rho$ is onto, we use an inductive argument on $k=0,\ldots, r$ for the map $\rho_k:\sH\to\sH_{r-k}\oplus\cdots\oplus\sH_r$. Consider the map $f(x,y):=\Delta(x,y-c')$ for $(x,y)\in V\times V'$ and some fixed $\b c=(c,c')\in\Idem$. Applying the basic identity $\Delta(u,v)\Delta(u^v,w) = \Delta(u,w+v)$ of the Jordan pair determinant, one immediately verifies that $f$ is an element of $\sH$. The restriction of $f$ to $\Idem_k$ is given by $f(e,e') = \Delta_\b e'(c')$ with $\b e = (e,e')$. Therefore, Lemma~\ref{lem:RankConditions} implies that $f|_{\Idem_k} = 0$ if and only if $\rank\b c < k$.
For $k=0$, choose $\b c\in\Idem$ with $\rank\b c = r$. Then, $\rho_0(f)$ is non-trivial in $\sH_r$, and since $\sH_r$ is irreducible and $\rho_0$ is $L$-equivariant, Schur's lemma implies that $\rho_0$ is onto. For $k>0$, choose $\b c\in\Idem$ with $\rank\b c = r-k$. Then, $\rho_k(f)$ is non-trivial only in the component of $\sH_{r-k}$. Therefore, the $L$-module generated by $\rho_k(f)$ in $\sH_{r-k}\oplus\cdots\oplus\sH_r$ is a non-trivial submodule of the first component, and by irreduciblibly of $\sH_{r-k}$, we obtain
\begin{align}\label{eq:SectionInclusion}
	\sH_{r-k}\oplus\{0\}\oplus\cdots\oplus\{0\}\subseteq\rho_{r-k}(\sH)\;.
\end{align}
Now let $(f_{r-k},\ldots,f_r)$ be any element in $\sH_{r-k}\oplus\cdots\oplus\sH_r$. By induction hypothesis, there exists an element $f\in\sH$ with $f|_{\Idem_\ell} = f_\ell$ for all $\ell>r-k$. Due to \eqref{eq:SectionInclusion}, the first component can be fixed by choosing a section $g\in\sH$ with $g|_{\Idem_{r-k}} = f_{r-k} - f|_{\Idem_{r-k}}$ and $g|_{\Idem_\ell} = 0$ for $\ell>r-k$. Therefore, $\rho_k(f+g) = (f_{r-k},\ldots,f_r)$, and hence $\rho_k$ is surjective.

To show that $\rho$ is injective, we have to show that a section $f\in H^0(X,\linebundle)$ that vanishes along all closed $L$-orbits $(X_k)_{k=0,\ldots,r}$ must also vanish on all of $X$. For this we use an inductive argument showing 
\begin{align}\label{eq:VanishingAssertion}
	f(e+c,e') = 0\quad
	\text{for all orthogonal $\b e = (e,e'),\;\b c = (c,c')\in \Idem$}\;.
\end{align}
From Proposition~\ref{prop:Representatives} and the fact that any element admits a completion to an idempotent, it follows that that any element in $V\times V'$ is projectively equivalent to some element of the form $(e+c,e')$ as in \eqref{eq:VanishingAssertion}. Therefore, showing \eqref{eq:VanishingAssertion}, also proves that $\rho$ is injecitve. As above, let $r$ be the rank of $(V,V')$. We prove \eqref{eq:VanishingAssertion} by induction on $n=r-k$ with $k = \rank(\b e)$. For $n=0$, the statement just reads $f(e,e')=0$ for all $\b e= (e,e')\in\Idem_r$, since $c\in V_0(\b e) =\{0\}$. This is satisfied by the assumption $f|_{\Idem_r} = 0$. For $n>0$, we start a second proof by induction, namely induction on $\rank(\b c)$. For $\rank(\b c) = 0$, i.e.,\ $c = 0$, we have $f(e,\,e') = 0$ due to the assumption that $f|_{\Idem_k}=0$. For $\rank(\b c)\geq 1$, consider the decomposition $\b c = \b c_1+\b c_2$ for some orthogonal idempotents $\b c_i =(c_i,c_i')$ with $\rank(\b c_2) = 1$. Due to the cocycle condition on $f$, we obtain for any $t\in\CC$
\[
	f(e + c_1+tc_2,\,e')
		= \Delta(e+c_1+tc_2,\,-(1-\tfrac{1}{t})c_2')\,f(e+c_1+c_2,e'+(1-\tfrac{1}{t})c_2')\;.
\]
By orthogonality of $\b e,\,\b c_1$ and $\b c_2$, the Jordan determinant simplifies to
\[
	\Delta(tc_2,\,-(1-\tfrac{1}{t})c_2')
		= \Delta(c_2,\,(1-t)c_2')
		= \Delta_c'(tc_2') = t^{\rank(\b c_2)} = t\;,
\]
and we therefore obtain
\[
	f(e + c_1+tc_2,\,e')
		= t\,g(\tfrac{1}{t})\quad\text{with}\quad
		g(\tfrac{1}{t}) := f(e+c_1+c_2,e'+(1-\tfrac{1}{t})c_2')
\]
for some entire function $g\in\mathcal O(\CC)$. Since the left hand side is holomorphic in $t$, it follows that $g(\tfrac{1}{t}) = \alpha + \beta\tfrac{1}{t}$ for some $\alpha,\beta\in\CC$. By induction hypothesis (on $n$), $g(0) = 0$, so $\alpha = 0$, and we conclude
\[
	f(e + c_1+tc_2,\,e') = \beta = \text{const.}
\]
Setting $t=0$, the induction hypothesis on $\rank(\b c)$ yields $\beta = f(u_+ + e_+,\,u_-) = 0$, and hence we also obtain $f(e+c,e') = f(e+c_2+c_1,\,e') = 0$. This finally proves \eqref{eq:VanishingAssertion}, and completes the proof of the theorem.
\end{proof}

The next proposition determines the highest weights of the $L$-types $H^0(X_j,\linebundle|_{X_j})$, and gives an explicit description of a highest weight vector. Let $(e_1,\ldots,e_r)$ be the frame of tripotents associated to the strongly orthogonal roots $\gamma_1,\ldots,\gamma_r$, cf.\ Section~\ref{subsec:Correspondence}, and let $\lambda$ be the fundamental weight associated to $\gamma_1$. Set $\epsilon_k:=\sum_{i=1}^k e_i$ for $k = 0,\ldots,r$.

\begin{prop}\label{P: niceterm}
	For $k\in\{0,\ldots,r\}$, the $L$-type $H^0(X_k,\linebundle|_{X_k})$ has highest weight (with 
	respect to $\Phi_L^+$)
	\[
		\lambda_k:= \lambda+\gamma_1+\cdots+\gamma_k\in (i\mf t)^*\;.
	\]
	In particular, the decomposition \eqref{eq:LTypeDecomposition} is multiplicity free. 
	Moreover, the map 
	\[
		f_k(x,y):=\Delta(x,y-\overline\epsilon_k)
	\]
	defines a section $f_k\in H^0(X,\linebundle)$, the restriction of which to $X_k$ is a highest 
	weight vector of $H^0(X_k,\linebundle|_{X_k})$. In local coordinates
	$\iota_0:V\hookrightarrow X$, the trivialization $\tilde f_k$ of $f_k$ is given by
	$\tilde f_k(x) = \Delta(x,-\overline\epsilon_k)$ for $x\in V$, and the restriction to the 
	$\CC$-linear span of the frame $(e_1,\ldots, e_r)$ is
	\[
		\tilde f_k(x) = (1+x_1)\cdots(1+x_k)\quad\text{for}\quad x = x_1e_1+\cdots+x_r e_r
	\] 
	with $x_1,\ldots,x_r\in\CC$.
\end{prop}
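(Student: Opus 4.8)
The plan is to prove the three assertions of Proposition~\ref{P: niceterm} in sequence: first that $f_k$ genuinely defines a section in $H^0(X,\linebundle)$, then that its restriction to $X_k$ is a highest weight vector of weight $\lambda_k$, and finally to verify the explicit local formulas. I would begin by checking that $f_k(x,y):=\Delta(x,y-\overline\epsilon_k)$ satisfies the cocycle condition from the identification of $H^0(X,\linebundle)$ given just before Theorem~\ref{thm:Decomposition}. This is exactly the argument already used in the proof of that theorem for the section $\Delta(x,y-c')$, now with $c'=\overline\epsilon_k$; the basic identity $\Delta(u,v)\Delta(u^v,w)=\Delta(u,w+v)$ of the Jordan pair determinant yields it immediately. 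The local trivialization $\tilde f_k(x)=\Delta(x,-\overline\epsilon_k)$ is then read off from $\iota_0$, i.e.\ setting $y=0$.

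Next I would identify the highest weight. The cleanest route is to exploit $L$-equivariance together with the explicit restriction formula. Using the spectral decomposition and the Peirce decomposition relative to the frame $(e_1,\ldots,e_r)$, I would show that on the $\CC$-span of the frame the Jordan pair determinant factors as $\Delta(x,-\overline\epsilon_k)=\prod_{i=1}^k(1+x_i)$ for $x=\sum x_ie_i$. This follows because $\overline\epsilon_k=\sum_{i=1}^k\overline e_i$ is the involution-image of the rank-$k$ tripotent $\epsilon_k$, and the generic minimum polynomial restricted to a frame splits into the product of the rank-one factors associated to the $e_i$; only the first $k$ factors are affected by $\overline\epsilon_k$. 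This establishes the last displayed formula. To extract the weight, I would act by the maximal torus: the element $i\,D_{e_j,\overline e_j}\in\mf s\subseteq\mf t$ scales the coordinate $x_j$, and by \eqref{eq:GammaIdentity} the root $\gamma_j$ records exactly this scaling via $\gamma_j(T)=\tfrac1p\tau(Te_j,\overline e_j)$. Tracking how $\tilde f_k$ transforms under $\exp(tT)$ for $T\in\mf t$, and combining the translation weight $\lambda$ (the weight of the base evaluation functional $w_1$) with the contributions $\gamma_1+\cdots+\gamma_k$ coming from the factors $(1+x_i)$, gives weight $\lambda_k=\lambda+\gamma_1+\cdots+\gamma_k$.

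To confirm this is the \emph{highest} weight rather than merely a weight, I would check that $f_k|_{X_k}$ is annihilated by $\fn_L^+$, i.e.\ is a highest weight vector for the $\Phi_L^+$-positive system. Since $\sH_k$ is already known to be $L$-irreducible (Proposition~\ref{prop:LOrbits} and Theorem~\ref{thm:Decomposition}), it suffices to exhibit \emph{one} nonzero weight vector whose weight is extremal; equivalently I would verify that $\fn_L^+$ raises weights out of the range occupied by $\tilde f_k$, using the joint Peirce rules $\JTP{V_{ij}}{V'_{jk}}{V_{k\ell}}\subseteq V_{i\ell}$ to see that the root vectors in $\fn_L^+$, realized as derivations of $V$, act on the determinant by strictly increasing the relevant Peirce indices and hence annihilate the leading frame-diagonal term. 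Finally, multiplicity-freeness of \eqref{eq:LTypeDecomposition} is then automatic, since the weights $\lambda_k=\lambda+\gamma_1+\cdots+\gamma_k$ for $k=0,\ldots,r$ are pairwise distinct (the $\gamma_i$ being nonzero and ordered), so the $r+1$ irreducible summands $\sH_k$ are pairwise non-isomorphic.

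The main obstacle I anticipate is the weight computation in the second paragraph: correctly bookkeeping the interplay between the translation part of the group action (which contributes the base weight $\lambda$ attached to $w_1$) and the linear $L$-action on the frame coordinates (which contributes $\gamma_1+\cdots+\gamma_k$), while keeping the sign and normalization conventions for $\theta$, the involution $\overline{\,\cdot\,}$, and \eqref{eq:GammaIdentity} consistent. In particular, verifying the factorization $\Delta(x,-\overline\epsilon_k)=\prod_{i=1}^k(1+x_i)$ on the frame and then differentiating the torus action to read off that each factor $(1+x_i)$ contributes precisely $\gamma_i$ is where the real care is needed; the cocycle verification and the highest-weight annihilation are comparatively routine given the Peirce rules already recorded.
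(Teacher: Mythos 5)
Your cocycle verification and the product formula $\Delta(x,-\overline\epsilon_k)=\prod_{i\le k}(1+x_i)$ on the frame span are fine and agree with the paper (which simply cites Loos for the latter), and your multiplicity-freeness conclusion from distinctness of the $\lambda_k$ is also the paper's. The gap is in your central step, the weight extraction. You treat $\tilde f_k$ as if it transformed by a character under the torus, with ``each factor $(1+x_i)$ contributing precisely $\gamma_i$.'' It does not: by Proposition~\ref{P: monomweight}, grouping the monomials of a trivialization by the weights $\lambda+\sum_j a_j\alpha_j$ gives the weight decomposition of the section, and the monomials $\prod_{i\in S}x_i$, $S\subseteq\{1,\ldots,k\}$, occurring in $\tilde f_k$ have pairwise distinct weights $\lambda+\sum_{i\in S}\gamma_i$. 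So for $k\geq 1$ the section $f_k$ is \emph{not} a weight vector of $H^0(X,\linebundle)$; indeed it restricts nontrivially to every closed orbit $X_j$ with $j\leq k$ (as shown in the proof of Theorem~\ref{thm:Decomposition}), i.e., it has nonzero components in $k+1$ distinct irreducible summands. Only the restriction $f_k|_{X_k}$ is a weight vector, and its weight is $\lambda_k$ precisely because all the lower-weight components of $f_k$ vanish identically on $X_k$ --- which is essentially the assertion you are trying to prove, not something you can read off by differentiating the torus action on the chart.

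There is a second, structural, obstruction to your method: the chart cannot see $X_k$ at all. For any nonzero idempotent $\b e=(e,e')$ the Bergman operator $\B{e}{e'}$ vanishes on $V_2(\b e)\neq\{0\}$ (since $D_{e,e'}=2\,\Id$ and $Q_eQ_{e'}=\Id$ there), so $(e,e')$ is never quasi-invertible; hence $X_k\cap V=\emptyset$ for every $k\geq 1$, and in particular the base point $\GP{\epsilon_k}{\overline\epsilon_k}$ and the entire orbit $X_k$ lie in $X\setminus V$, disjoint from the frame span on which you compute. The same objection hits your $\fn_L^+$-step: ``annihilating the leading frame-diagonal term'' of $\tilde f_k$ is a statement about $f_k$ on $V$, not about the section $f_k|_{X_k}$, which lives on a subvariety of $X\setminus V$. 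The paper's proof supplies exactly the ingredient your proposal lacks: Lemma~\ref{lem:BorelContained}, asserting that the $L$-stabilizer $Q_k$ of $\GP{\epsilon_k}{\overline\epsilon_k}$ (equivalently, of the Peirce space $V_2(\epsilon_k)$) contains the opposite Borel $B_L^{\op}$ --- a genuine joint-Peirce-rule computation using Harish-Chandra's restricted-root lemma. Granting this, the weight is obtained as the differential of the explicit character $\chi_k(h)=\chi(h)\,\Delta(\epsilon_k,\overline\epsilon_k-h^{\#}\overline\epsilon_k)$ of $Q_k$, computed via $\tfrac{\partial}{\partial y}\Delta(x,y)(z)=-\tfrac{1}{p}\,\Delta(x,y)\,\tau(x^y,z)$ and \eqref{eq:GammaIdentity}, and the highest-weight property of $f_k|_{X_k}$ follows because $f_k\equiv 1$ on the chart $U_{\overline\epsilon_k}$, which \emph{does} contain the $B_L^{\op}$-fixed point. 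Any repair of your argument needs this lemma or a substitute (e.g.\ working in the chart centred at $\overline\epsilon_k$ rather than at $0$); with only the $\iota_0$-trivialization, neither the weight nor the $\fn_L^+$-invariance can be pinned down.
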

\begin{proof}
Let $Q_k$ be the $L$-stabilizer of the element $\GP{\epsilon_k}{\overline\epsilon_k}\in X_k$. According to Proposition~\ref{prop:LOrbits}, $Q_k$ is also the $L$-stabilizer of the Peirce 2-space $V_2(\epsilon_k)$, and Lemma~\ref{lem:BorelContained} below shows that the opposite Borel subgroup $B_L^\op$ (i.e., the one with Lie algebra $\mf b_L^\op:=\mf h\oplus\mf n_L^-$) is contained in $Q_k$. Therefore, to determine the weight of $H^0(X_k,\linebundle|_{X_k})$, it suffices to determine the character $\chi_k$ corresponding to the line bundle $\linebundle|_{X_k}$. Due to \eqref{diag:PeirceImbed}, this is the same character as the one corresponding to $\iota_k^*\linebundle$ on $\Peirce_k$. Hence, $\chi_k$ can be read off from the cocycle which describes sections in $\iota_k^*\linebundle$, namely
\[
	\chi_k:Q_k\to\CC,\ h\mapsto
		\chi(h)\cdot\Delta(\epsilon_k,\overline\epsilon_k - h^\#\overline\epsilon_k)
		= \chi(h)\cdot\Delta'_{\epsilon_k}(h^\#\overline\epsilon_k)\;,
\]
where $\chi$ is the character of $\linebundle$ (restricted to $Q_k\subseteq L$). We note that $\Delta'_{\epsilon_k}$ is the (opposite) Jordan algebra determinant, and one can show that $\Delta'_{\epsilon_k}(h^\#\overline\epsilon_k) = \Delta_{\epsilon_k}(h^{-1}\epsilon_k)$, which is a more common description. Here, we prefer to use the first formula for $\chi_k$. Let $\mf q_k$ denote the Lie algebra of $Q_k$. Next, we determine the derivative of $\chi_k$ at $e\in Q_k$ along an element $X\in\mf q_k$. By definition, the derivative of the character $\chi$ of $\linebundle$ is the linear functional $\lambda$. For the second term, $\tilde\chi_k(h):=\Delta(\epsilon_k,\overline\epsilon_k - h^\#\overline\epsilon_k)$, one shows (using standard properties of the Jordan pair determinant) that $\frac{\partial}{\partial y}\Delta(x,y)(z) = -\tfrac{1}{p}\,\Delta(x,y)\tau(x^y,z)$, where $p$ is defined by \eqref{eq:StructureConst}. By a short computation we thus obtain 
\[
	d\tilde\chi_k(e)(X) = \tfrac{1}{p}\,\tau(\epsilon_k,X^\#\overline\epsilon_k)
		= \tfrac{1}{p}\,\tau(X\epsilon_k,\overline\epsilon_k)\;.
\]
Recall that the Cartan subalgebra $\mf h\subseteq\mf q_k$ decomposes into $\mf h = \mf s_\CC\oplus\mf s_\CC^\bot$, where the elements $H\in\mf s_\CC^\bot$ have the property that $H(e_j) = 0$ for all $j$. Therefore, $d\tilde\chi_k(e)$ vanishes along $\mf s_\CC^\bot$. Moreover, $\mf s_\CC$ is spanned by the elements $H_{\gamma_j} = D_{e_j,\overline e_j}$ for $j = 1,\ldots, r$, and the central element $\Id_V$, and we obtain by (strong) orthogonality of the tripotents of the frame elements
\[
	d\tilde\chi_k(e)(H_{\gamma_j})
		= \tfrac{1}{p}\,\tau(D_{e_j,\overline e_j}\epsilon_k,\overline\epsilon_k)
		=\begin{cases}
			\tfrac{2}{p}\,\tau(e_j,\overline e_j) = 2 & ,\text{ if $j\leq k$,}\\
			0	&,\text{ if $j>k$.}
		\end{cases}
\]
Together with $d\chi(e)(\Id_V) = k$, it follows that $d\tilde\chi_k(e)$ coincides on $\mf h$ with the functional $\gamma_1+\cdots+\gamma_k$, and we therefore conclude that $d\chi_k(e) = \lambda + \gamma_1+\cdots+\gamma_k$. Next, we consider the map $f_k$.

Due to the identity $\Delta(u,v)\Delta(u^v,w) = \Delta(u,w+v)$ it follows that $f_k$ satisfies the cocycle condition for sections in $H^0(X,\linebundle)$. The evaluation of $\tilde f_k(x) = f_k(x,0)=\Delta(x,-\overline\epsilon_k)$ at $x=x_1e_1+\cdots+x_re_r$ is a special case of the
formula $\Delta(x,y) = \prod_{i=1}^r(1-x_iy_i)$ with $y = y_1\overline e_1+\cdots+y_r\overline e_r$, cf.\ \cite[\S16.15]{Lo75}. To prove that $f_k$ is a highest weight vector of the $L$-type $H^0(X_k,\linebundle|_{X_k})$ it suffices to show that $f_k$ is constant on an open subset of $X_k$ which contains $\GP{\epsilon_k}{\overline\epsilon_k}$, since $B_L^\op\subseteq Q_k$ (due to the next lemma). Indeed, $f_k$ is even constant on the open subset $U_{\overline\epsilon_k}=\Set{\GP{x}{\overline\epsilon_k}}{x\in V}\subseteq X$, since $f_k(x,\overline\epsilon_k) = \Delta(x,0) = 1$ for all $x\in V$. This completes the proof.
\end{proof}

\begin{lemma}\label{lem:BorelContained}
	Let $(e_1,\ldots,e_r)$ denote the frame of tripotents corresponding to the Cartan subalgebra 
	$\mf h$ of $\mf l$ as it is defined in Section~\ref{sec:PerlimLieTheo}. For $k=0,\ldots,r$ let 
	$\epsilon_k:=\sum_{i=1}^ke_i$. Then, the stabilizer subgroup
	$Q_k\subseteq L$ of the Peirce 2-space $V_2(\epsilon_k)$ contains the opposite Borel 
	subgroup $B_L^\op$.
\end{lemma}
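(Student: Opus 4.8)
The plan is to show that the opposite Borel subgroup $B_L^\op$, whose Lie algebra is $\mf b_L^\op = \mf h\oplus\mf n_L^-$, stabilizes the Peirce $2$-space $V_2(\epsilon_k)$. Since $B_L^\op$ is connected, it suffices to prove the infinitesimal statement: that the Lie algebra $\mf b_L^\op$ maps $V_2(\epsilon_k)$ into itself under the identification of $\mf l$ with derivations on $V$. So first I would reduce to checking that $T\,V_2(\epsilon_k)\subseteq V_2(\epsilon_k)$ for every $T$ in $\mf h$ and every $T$ in a root space $\mf l_{-\alpha}$ with $\alpha\in\Phi_L^+$. The Cartan part is immediate, since $\mf h = \mf s_\CC\oplus\mf s_\CC^\bot$ acts diagonalizably and each $H_{\gamma_j} = D_{e_j,\overline e_j}$ acts by the joint Peirce grading, while $\mf s_\CC^\bot$ annihilates every $e_j$; in either case the Peirce spaces $V_2(\epsilon_k)$ are preserved. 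Thus the real content is the negative root spaces.

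The key step is to translate the condition $\alpha\in\Phi_L^+$, i.e. $m_{\beta_1}(\alpha)=0$, into a statement about how the root vector acts on the joint Peirce decomposition of $V$ relative to the frame $(e_1,\ldots,e_r)$. The plan is to use the correspondence from Section~\ref{subsec:Correspondence} between $\mf{sl}_2$-triples and tripotents, together with the joint Peirce rules $\JTP{V_{ij}}{V'_{jk}}{V_{k\ell}}\subseteq V_{i\ell}$. The tripotent $\epsilon_k = e_1+\cdots+e_k$ is itself a sum of orthogonal primitive tripotents, so its Peirce $2$-space $V_2(\epsilon_k)$ decomposes as a sum of joint Peirce spaces $V_{ij}$ with $1\leq i,j\leq k$. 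I would identify $V_2(\epsilon_k)$ with $\bigoplus_{1\leq i\leq j\leq k} V_{ij}$ and then argue that a root vector in $\mf l_{-\alpha}$, acting as a derivation of the form essentially $D_{x,y}$ with $x,y$ in joint Peirce spaces, shifts the indices in a way that is compatible with staying inside this range precisely because $\alpha$ does not involve $\beta_1 = \gamma_1$.

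The main obstacle, and where the argument needs the most care, will be pinning down exactly which joint Peirce spaces a given negative root vector connects, and verifying that the ``opposite'' (i.e. negative, $\Phi_L^+$-lowering) direction is the one that preserves $V_2(\epsilon_k)$ rather than enlarging it. Here I expect to rely on the relation between the ordering of roots fixed in Section~\ref{sec:PerlimLieTheo} — where the $\gamma_i$ were chosen in decreasing order and $\mf n_L^-$ corresponds to the $\Phi_L^+$-lowering operators — and the block-upper/lower-triangular structure that the frame imposes on $\End(V)$. Concretely, a lowering operator should move an index $j$ to a strictly smaller index, and since $V_2(\epsilon_k)$ consists of the blocks with indices at most $k$, such a move cannot escape the range $\{1,\ldots,k\}$; conversely a raising operator could push an index up to $k+1$ and leave the space. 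Making this index-shifting bookkeeping precise, and confirming the sign conventions so that $B_L^\op$ (and not $B_L$) is the correct stabilizer, is the crux.

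Once the infinitesimal inclusion $\mf b_L^\op\,V_2(\epsilon_k)\subseteq V_2(\epsilon_k)$ is established for all simple negative root vectors and all of $\mf h$, the full statement follows by exponentiation and the fact that these generate $B_L^\op$, giving $B_L^\op\subseteq Q_k$ as claimed.
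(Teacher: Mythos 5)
Your setup is sound and in fact mirrors the paper's own strategy: reduce to the Lie algebra statement, use the joint Peirce decomposition of $V$ with respect to the frame $(e_1,\ldots,e_r)$, identify $V_2(\epsilon_k)$ with the sum of the blocks $V_{ij}$, $1\leq i\leq j\leq k$, and show that the operators coming from $\mf b_L^\op$ can only shift Peirce indices in a harmless direction. The Cartan part and the reduction to the infinitesimal statement are fine. But the decisive step --- that the root spaces $\mf l_{-\alpha}$, $\alpha\in\Phi_L^+$, really are the index-lowering ones --- is exactly what you do not prove; you yourself flag it as ``the crux'' and leave it as bookkeeping. This is not routine bookkeeping: it requires knowing how the roots in $\Phi_L^+$ restrict to the abelian subalgebra $\mf s$ spanned by the elements $i\,D_{e_j,\overline e_j}$ and $i\Id_V$. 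The paper settles this by invoking Harish-Chandra's theorem (\cite[Lemma 13]{HC56}): every $\alpha\in\Phi_L^+$ restricts to $\mf s$ as $0$, as $(\gamma_i-\gamma_j)/2$ with $j<i$, or as $-\gamma_i/2$. From this one reads off that $\mf n_L^-$ lies in $\bigoplus_{1\leq i\leq j\leq r}\mf l_{ij}\oplus\bigoplus_{i=1}^r\mf l_{i0}$, where $\mf l_{ij}$ is spanned by the derivations $D_{u,\overline v}$ with $u\in V_{i\ell}$, $v\in V_{\ell j}$ (of $\mf s$-weight $(\gamma_i-\gamma_j)/2$, with $\gamma_0:=0$); the joint Peirce rules then give $T\epsilon_k\in V_{ij}\subseteq V_2(\epsilon_k)$ or $T\epsilon_k=0$ for such $T$, and the derivation property upgrades $T\epsilon_k\in V_2(\epsilon_k)$ to $T\,V_2(\epsilon_k)\subseteq V_2(\epsilon_k)$. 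Without this restricted-root input, or an equivalent argument, your proof cannot decide between $B_L$ and $B_L^\op$ --- which is the whole content of the lemma. Note in particular that positive roots may restrict to the \emph{negative} weight $-\gamma_i/2$, so the answer is not a formal consequence of sign conventions.

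Two details in your sketch confirm that the missing step is not secure. First, you say the bookkeeping works ``precisely because $\alpha$ does not involve $\beta_1=\gamma_1$''; but every root of $\mf l$ satisfies $m_{\beta_1}(\alpha)=0$ by the very definition of $\Phi_L$, so this condition cannot distinguish raising from lowering operators. Second, the $\gamma_i$ are constructed in \emph{increasing} lexicographic order (each $\gamma_{i+1}$ is the smallest admissible root), not decreasing. Finally, even granting the lowering heuristic, it does not cover all of $\mf n_L^-$: the root spaces restricting to $+\gamma_i/2$ lie in $\mf l_{i0}$, whose elements move the index $0$ upward; they preserve $V_2(\epsilon_k)$ only because they annihilate $\epsilon_k$ outright, a case the paper treats separately and your scheme of ``moving indices to strictly smaller ones within $\{1,\ldots,k\}$'' does not capture.
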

\begin{proof}
This proof is based on the ideas of \cite{Up86}. Since the Peirce variety $\Peirce_k$ is a projective variety, the stabilizer subgroup $Q_k\subseteq L$ is parabolic, and hence it suffices to prove this lemma on the Lie algebra level, i.e., we show that $\mf b_L^\op\subseteq\mf q_k$ with $\mf q_k=\Set{T\in\mf l}{T V_2(\epsilon_k)\subseteq V_2(\epsilon_k)}$. Let $V = \bigoplus_{0\leq i\leq j\leq r} V_{ij}$ be the joint Peirce decomposition with respect to the frame of tripotents $(e_1,\ldots, e_r)$, set $V_{ij}:=V_{ji}$ for $i\neq j$, and define
\[
	\mf l_{ij}:=\Set{D_{u,\overline v}\in\mf l}
									{u\in V_{i\ell},\ v\in V_{\ell j}\text{ for some $\ell$}}\;.
\]
We note that $\mf l_{ij}\neq\mf l_{ji}$ for $i\neq j$. We claim that
\[
	\mf l = \mf l_0\oplus\bigoplus_{i\neq j}\mf l_{ij}\quad\text{with}\quad 
	\mf l_0:=\bigoplus_{i=1}^r\mf l_{ii}
\]
is the weight space decomposition of $\mf l$ with respect to the adjoint action of the abelian subalgebra $\mf s=\left\langle i\,D_{e_j,\overline e_j},\; i\Id_V\,\mid\,j=1,\ldots,r\right\rangle_\RR\subseteq\mf t$. Indeed, since $\mf l=[\mf p^+,\mf p^-]$, $\mf l$ is generated by all derivations of the form $D_{u,\overline v}$ with $u,v\in V$, and for $T=D_{u,\overline v}\in\mf l_{ij}$, the Jordan identity \JP{15} implies
\[
	[D_{e_k,\overline e_k},D_{u,\overline v}]
		= D_{\JTP{e_k}{\overline e_k}{u},\overline v} -D_{u,\JTP{\overline e_k}{e_k}{\overline v}}
		=(\delta_{ki}-\delta_{kj})\,D_{u,\overline v}\;,
\]
where $\delta_{ij}$ is Kronecker's delta. Moreover, $i\Id_V$ acts trivially on each $\mf l_{ij}$. With \eqref{eq:GammaIdentity}, it readily follows that the weight corresponding to $\mf l_{ij}$ is $(\gamma_i-\gamma_j)/2$, where we also set $\gamma_0:=0$.

Now let $\mf l = \mf t\oplus\bigoplus_{\alpha\in\Phi_L}\mf l_\alpha$ be the root decomposition of $\mf l$ with respect to the maximal torus $\mf t$. By \cite[Lemma 13]{HC56}, a positive root $\alpha\in\Phi_L^+$ either vanishes on $\mf s$, or its restriction to $\mf s$ is of the form $(\gamma_i-\gamma_j)/2$ with $1\leq j< i\leq r$ or $-\gamma_i/2$. We thus obtain that 
\[
	\mf p:=\bigoplus_{1\leq i\leq j\leq r}\mf l_{ij}\oplus\bigoplus_{i=1}^r\mf l_{i0}
\]
is a subalgebra of $\mf l$ which contains the opposite Borel subgalgebra $\mf b_L^{\textup{op}}$.

In the last step of the proof, we show that $\mf p\subseteq\mf q_k$. Due to the derivation property of elements $T\in\mf p$, the relation $T\epsilon_k\in V_2(\epsilon_k)$ already implies $T V_2(\epsilon_k)\subseteq V_2(\epsilon_k)$. First assume that $T\in\mf l_{ii}$, so $T = D_{u,\overline v}$ with $u,v\in V_{i\ell}$. Since $V_{i\ell}$ is contained in one of the Peirce spaces $V_m(\epsilon_k)$ with $m\in\{0,1,2\}$, the Peirce rules imply $D_{u,\overline v}\epsilon_k\in V_2(\epsilon_k)$. Now assume $T\in\mf l_{ij}$ for $1\leq i<j\leq r$. If $i\leq k$, then the joint Peirce rules imply $D_{u,\overline v}\epsilon_k\in V_{ik}$, which is a subspace of $V_2(\epsilon_k)$. If $i>k$, then $j>k$ and the joint Peirce rules imply $D_{u,\overline k}\epsilon_k = 0$. For the last case, $T\in\mf l_{i0}$, i.e.\ $T =D_{u,\overline v}$ with $u\in V_{i\ell}$, $v\in V_{\ell0}$, the joint Peirce rules yield $D_{u,\overline v}\epsilon_k = 0$. To sum up, this shows that $T\epsilon_k\in V_2(\epsilon_k)$ for all $T\in\mf p$, which completes the proof.
\end{proof}

\section{Reduced spaces for the $K$-action}
\subsection{Moment map}
In this section we explicitely determine the moment map of the $K$-action on the compact Hermitian symmetric space $X=U/K$. Recall from Section~\ref{subsec:PrequantumBundle} that $X$ can be realized as the coadjoint orbit $\mathcal O_\lambda\subseteq\mf u^*$ with base point $i\lambda\in\mf u^*$, where $-\lambda$ is the extension of the fundamental weight associated to $\gamma_1$. In this realization the moment map $\mu:\mathcal O_\lambda\to\mf u^*$ is just the restriction of the identity map to $\mathcal O_\lambda$, and the moment map $\mu_\mf k$ corresponding to the $K$-action on $X\cong\mathcal O_\lambda$ is given by restriction to $\mf k$, i.e., $\mu_\mf k(x) = \mu(x)|_{\mf k}$ for all $x\in\mathcal O_\lambda$.

In the following we use a constant multiple of the Killing form $\kappa$ on $\mf u$ to identify $\mf u^*$ with $\mf u$, $\mf k^*$ with $\mf k$, and coadjoint orbits with the corresponding adjoint orbits. More precisely, we use
\begin{align}\label{eq:DualIdentification}
	\vartheta:\mf u\to\mf u^*,\ X\mapsto -\tfrac{1}{2p}\,\kappa(X,\Box)\;,
\end{align}
where $p$ is defined in \eqref{eq:StructureConst}. Let $\tilde\mu:=\vartheta\circ\mu$ and $\tilde\mu_\mf k:=\vartheta\circ\mu_\mf k$ denote the corresponding moment maps. Then, $\tilde\mu_\mf k = \proj_\mf k\circ\tilde\mu$, where $\proj_\mf k$ is the orthogonal projection of $\mf u$ onto $\mf k$ with respect to the Killing form.

\begin{lemma}\label{lem:DualFundamWeight}
	The map $\vartheta$ identifies $i\lambda\in\mf u^*$ with $i\Id_V\in\mf k$, which is central in
	$\mf k$.
\end{lemma}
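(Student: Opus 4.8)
The plan is to prove the equivalent $\C$-linear statement that the functional $\vartheta_\C(\Id_V):=-\tfrac{1}{2p}\,\kappa(\Id_V,\Box)$ on $\g$ coincides with $\lambda$; multiplying by $i$ and restricting to $\mf u$ then gives $\vartheta(i\Id_V)=i\lambda$, while the centrality of $i\Id_V$ in $\mf k$ is already recorded in Section~\ref{sec:VectorFieldsAndGroupActions}. Since $\Id_V$ spans $\mf z(\mf l)\subseteq\fh$, we have $\kappa(\Id_V,\g_\alpha)=0$ for every root $\alpha$, and $\lambda$ annihilates all root spaces by the extension convention; hence both functionals are supported on $\fh$, and it suffices to check that $\kappa(\Id_V,H)=-2p\,\lambda(H)$ for all $H\in\fh$.

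First I would compute the left-hand side via the Killing-form formula \eqref{eq:KillingForm}. Writing $\Id_V$ and $H\in\fh\subseteq\mf l$ as elements of $\mf l$ (with trivial $\mf p^\pm$-components), the formula reduces to $\kappa(\Id_V,H)=\kappa_{\mf l}(\Id_V,H)+2\,\Tr(\Id_V\cdot H)$. The first term vanishes because $\Id_V$ is central in $\mf l$, so $\kappa(\Id_V,H)=2\,\Tr_V(H)$, the trace of the derivation corresponding to $H$ acting on $V$. Decomposing $V=\mf p^-=\bigoplus_{\alpha\in\Phi_Q\setminus\Phi_L}\g_{-\alpha}$ into root lines and keeping track of the sign convention \eqref{eq:VecFieldComm} relating the Jordan bracket to the Lie bracket of $\g$, the derivation attached to $H$ acts on $\g_{-\alpha}$ by the scalar $\alpha(H)$. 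Consequently $\Tr_V(H)=\sum_{\alpha\in\Phi_Q\setminus\Phi_L}\alpha(H)$, and the claim becomes the purely root-theoretic identity $\sum_{\alpha\in\Phi_Q\setminus\Phi_L}\alpha=-p\,\lambda$.

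To establish this identity I would argue that $\Phi_Q\setminus\Phi_L$ is stable under the Weyl group $W_L$ of $\mf l$, so the sum $\sum_{\alpha}\alpha$ is $W_L$-invariant, vanishes on the Cartan subalgebra $\fh_L$ of $\mf l'$, and is therefore a scalar multiple of the fundamental weight $-\lambda$ associated to $\beta_1$; write $\sum_{\alpha}\alpha=c\,(-\lambda)$. Evaluating both sides at the coroot $H_{\gamma_1}=H_{\beta_1}$ and using $(-\lambda)(H_{\beta_1})=1$ gives $c=\sum_{\alpha}\alpha(H_{\gamma_1})=\Tr_V(H_{\gamma_1})$, and under the correspondence of Section~\ref{subsec:Correspondence} we have $H_{\gamma_1}=D_{e_1,\overline e_1}$. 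Since $e_1$ is a primitive tripotent, $D_{e_1,\overline e_1}$ is diagonalizable on $V$ with eigenvalues $0,1,2$ on the Peirce spaces $V_0(e_1),V_1(e_1),V_2(e_1)$, whose dimensions satisfy $\dim V_2(e_1)=1$ and $\dim V_1(e_1)=p-2$ by \eqref{eq:StructureConst}; hence $\Tr_V(H_{\gamma_1})=(p-2)+2=p$, giving $c=p$ and $\sum_{\alpha}\alpha=-p\,\lambda$. The main obstacle is precisely this determination of the constant: $W_L$-invariance yields proportionality to $-\lambda$ for free, but pinning down the factor $p$ requires the Jordan-theoretic input, namely the Peirce dimensions of a primitive tripotent, which is exactly where the structure constant $p$ of \eqref{eq:StructureConst} enters. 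Combining $\kappa(\Id_V,H)=2\sum_{\alpha}\alpha(H)=-2p\,\lambda(H)$ with the reduction of the first paragraph completes the proof.
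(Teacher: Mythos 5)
Your proof is correct, but it takes a genuinely different route from the paper's. The paper obtains the key proportionality for free from equivariance: writing $i\lambda(\cdot)=\kappa(Y,\cdot)$ on $\fu$, it notes that $\lambda$ vanishes on $\fk^{\perp}$ and that the coadjoint stabilizer of $\lambda$ in $U$ is exactly $K$, so $Y$ must lie in $\mathfrak{z}(\fk)=\RR\,i\Id_V$; the scalar is then fixed by a single evaluation against $H_{\gamma_1}=D_{e_1,\overline e_1}$, using \eqref{eq:KillingForm} and $\Tr D_{e_1,\overline e_1}=\tau(e_1,\overline e_1)=p$. You instead compute $\kappa(\Id_V,\cdot)$ on all of $\fh$ directly, reducing the lemma to the root-theoretic identity $\sum_{\alpha\in\Phi_Q\setminus\Phi_L}\alpha=-p\lambda$, which you prove by $W_L$-invariance plus the same evaluation at $H_{\gamma_1}$. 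So where the paper uses the group action to force proportionality to $i\Id_V$, you use $W_L$-invariance of the root sum to force proportionality to the fundamental weight $-\lambda$; the Jordan-theoretic normalization (the Peirce dimensions encoded in \eqref{eq:StructureConst}) enters both arguments identically, and your reading of \eqref{eq:StructureConst} as giving $\dim V_2(e_1)=1$, $\dim V_1(e_1)=p-2$ is consistent, since $\Tr D_{e_1,\overline e_1}=2\dim V_2+\dim V_1$. One point you should anchor more firmly: the assertion that the operator attached to $H\in\fh$ acts on $\g_{-\alpha}\subseteq\fp^-$ by $+\alpha(H)$ does \emph{not} follow from \eqref{eq:VecFieldComm} alone --- the naive fundamental-vector-field convention would produce $-\alpha(H)$ and destroy the sign of your final identity. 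The correct justification inside this paper's framework is the normalization $H_{\gamma_j}=D_{e_j,\overline e_j}$ of Section~\ref{subsec:Correspondence} together with \eqref{eq:GammaIdentity}, equivalently the check $D_{e_1,\overline e_1}e_1=2e_1=\gamma_1(H_{\gamma_1})\,e_1$; citing that instead of \eqref{eq:VecFieldComm} makes your trace computation airtight.
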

\begin{proof}
Let $\eta\in\mf u$ be the unique element determined by the relation $i\lambda(Y) = \kappa(X,Y)$ for all $Y\in\mf u$. We have to show that $X = -\tfrac{i}{2p}\,\Id_V$. Since $\lambda$ vanishes on $\mf k^\bot$, it follows that $X\in\mf k$. In addition, the relation $(\Ad_u^*\lambda)(Y) = \kappa(X,\Ad_u^{-1}Y) = \kappa(\Ad_uX,Y)$ implies that the stabilizer of the coadjoint action of $U$ with respect to $\lambda$ (which is $K$) coincides with the stabilizer of the adjoint action of $U$ with respect to $X$. Therefore, $X$ is central in $\mf k$, and since $\mf z(\mf k) = \RR\cdot i\Id_V$, we conclude that $X = c\cdot i\Id_V$ for some $c\in\RR$. Finally, using \eqref{eq:KillingForm}, the relation $\lambda(H_{\gamma_1}) = -1$ with $H_{\gamma_1}=D_{e_1,\overline e_1}$ implies
\begin{align*}
	-i&=(i\lambda)(H_{\gamma_1}) = \kappa(c\cdot i\Id_V,D_{e_1,\overline e_1})
		=2i\,c\cdot\Tr D_{e_1,\overline e_1}
		=2i\,c\cdot\tau(e_1,\overline e_1)\;,
\end{align*}
and since $p=\tau(e_1,\overline e_1)$, this completes the proof.
\end{proof}
As a first step, we determine the moment map on the open and dense subset $V\subseteq X$. Recall the Jordan-theoretic description of $X$ via projective equivalence, $X\cong(V\times V')/R$, so elements of $X$ are equivalence classes $\GP{x}{a}$ of elements in $V\times V'$. The embedding $\iota_0:V\hookrightarrow X$ is given by $\iota_0(x)=\GP{x}{0}$. The moment map on $X=U/K$ is by definition identified with the moment map $\tilde\mu_\mf k$ on $\vartheta^{-1}(\mathcal O_\lambda)\subseteq\mf u$ via the isomorphism given by $uK\mapsto\Ad_{u}(i\Id_V)$.

\begin{prop}\label{prop:ExplicitMomentMap1}
	The restriction of the moment map $\tilde\mu_\mf k:X\to\mf k$ of the $K$-action on 
	$X$ to $V\subseteq X$ is given by
	\begin{align}\label{eq:MomentMapFormula1}
		\tilde\mu_\mf k: V\to\mf k,\; x \mapsto 
			i\,\big(\B{x}{-\overline x}^{-1}-Q_x\B{-\overline x}{x}^{-1}Q_{\overline x}\big)\;.
	\end{align}
	If $x = \sum_{j=1}^k\sigma_je_j$ is the spectral decomposition of $x\in V$, then
	\begin{align}\label{eq:MomentMapFormula2}
		\tilde\mu_\mf k(x) = i\left(\Id_V - 
									 \sum_{j=1}^k \tfrac{\sigma_j^2}{1+\sigma_j^2}\,D_{e_j,\overline e_j}\right)\,.
	\end{align}
\end{prop}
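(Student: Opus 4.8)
The plan is to compute $\tilde\mu_\mf k$ pointwise on $V\subseteq X$ by transporting the base value $i\Id_V$ (Lemma~\ref{lem:DualFundamWeight}) with a suitable group element and then reading off the correct graded component. The first observation is that the orthogonal projection $\proj_\mf k\colon\mf u\to\mf k$ is nothing but the projection onto the $\mf l$-graded summand: writing $Y\in\mf u$ in the normal form $Y = u_0 + T_0 + q_{\overline{u_0}}$ with $u_0\in V$, $T_0\in\mf k$ (using $\mf u = \Set{u+T+q_{\overline u}}{u\in V,\ T\in\mf k}$), the $\mf k$-part is exactly the middle term $T_0$, while $u_0 + q_{\overline{u_0}}\in\mf m$, and $\mf k\perp\mf m$ for the Killing form. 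Since $\tilde\mu_\mf k = \proj_\mf k\circ\tilde\mu$ and $\tilde\mu(u\cdot o) = \Ad_u(i\Id_V)$ for $u\in U$ with $u\cdot o = x$ (base point $o=\GP{0}{0}$), it therefore suffices to compute the $\mf l$-component of $\Ad_u(i\Id_V)$.

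Next I would exhibit such a $u$ explicitly. For every $x\in V$ the pair $(x,-\overline x)$ is quasi-invertible, since $\B{x}{-\overline x}$ is self-adjoint and positive definite, so the big-cell factorization is available on all of $V$. I look for $u$ of the form $u = \trans{x}\,\ell_x\,\qtrans{w}$ with $\ell_x\in L$, $w\in V'$; such a $u$ automatically satisfies $u\cdot o = x$ because $L$ and $\exp(\mf p^+)$ fix $o$. Imposing $u\in U$, i.e.\ $\theta(u)=u$, and using $\theta(\trans{x}) = \qtrans{\overline x}$, $\theta(\qtrans{w}) = \trans{\overline w}$ and $\theta(\ell_x) = \ell_x^{-*}$, together with the Jordan ``flip'' identity (cf.\ \cite{Lo77}) that re-expresses $\qtrans{b}\trans{a}$ in $P^-LP^+$-order with the Bergman operator as $L$-factor and quasi-inverses as translation parameters, determines $\ell_x$ as a Bergman operator attached to $(x,-\overline x)$ and $w=\overline x$.

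I would then compute $\Ad_u(i\Id_V) = \Ad_{\trans{x}}\Ad_{\ell_x}\Ad_{\qtrans{w}}(i\Id_V)$ grading by grading, using $[\Id_V, q_v] = -q_v$, $[x,\Id_V] = -x$, $[x,q_v] = -D_{x,v}$ and $[x,D_{x,v}] = -\JTP{x}{v}{x}$ (all immediate from \eqref{eq:VecFieldComm}) together with the $L$-equivariance $\Ad_{\ell}q_v = q_{\ell^{-\#}v}$. The $\mf p^-$- and $\mf p^+$-parts of the result make up the $\mf m$-component and are discarded, while the $\mf l$-component comes out as $i(\Id_V - D_{x,w'})$ with $w' = \ell_x^{-\#}w$. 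The main obstacle lies precisely in this bookkeeping: pinning down $\ell_x$ from the flip identity (keeping track of $\#$ versus $*$ and of signs), and then collapsing $\Id_V - D_{x,w'}$ into the closed Bergman form $\B{x}{-\overline x}^{-1} - Q_x\B{-\overline x}{x}^{-1}Q_{\overline x}$ by means of the quasi-inverse expansion $x^y = \B{x}{y}^{-1}(x-Q_xy)$ and the standard identities relating $D$, $Q$ and $B$. A reassuring check that the target operator is the right one is that it is manifestly $\mf k$-valued: one has $\B{x}{-\overline x}^* = \B{x}{-\overline x}$, so $i\,\B{x}{-\overline x}^{-1}$ is anti-self-adjoint, and likewise for the second summand. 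This establishes \eqref{eq:MomentMapFormula1}.

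Finally I would derive \eqref{eq:MomentMapFormula2} from \eqref{eq:MomentMapFormula1} by substituting the spectral decomposition $x = \sum_{j=1}^k\sigma_je_j$. With respect to the joint Peirce decomposition of the frame, strong orthogonality makes $\B{x}{-\overline x}$, $Q_x$ and $Q_{\overline x}$ block-diagonal, so the computation decouples over the Peirce pieces and, on each, reduces to a rank-one ($\mf{sl}_2$) calculation: there $\B{x}{-\overline x} = (1+\sigma_j^2)^2$ and $Q_x\B{-\overline x}{x}^{-1}Q_{\overline x} = \sigma_j^4(1+\sigma_j^2)^{-2}$ on the relevant summand, giving the scalar $\tfrac{1-\sigma_j^2}{1+\sigma_j^2}$ on $V_{jj}$ and, after reassembling, the stated combination $i(\Id_V - \sum_j\tfrac{\sigma_j^2}{1+\sigma_j^2}D_{e_j,\overline e_j})$; the half-weight Peirce spaces are exactly what force the $\Id_V$-term rather than a naive sum over the frame. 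The same rank-one values reproduce, through the $SU(2)$-picture $\sigma_j = \tan\theta_j$ with $\cos 2\theta_j = \tfrac{1-\sigma_j^2}{1+\sigma_j^2}$, the transvection computation, which serves as an independent check on signs and normalizations.
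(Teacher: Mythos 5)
Your proposal is correct and follows essentially the same route as the paper's own proof: there, too, one lifts $\iota_0$ to $U$ via $u_x = \trans{x}\circ\B{x}{-\overline x}^{\half}\circ\qtrans{\overline x}$ (the very element your $\theta$-fixedness argument produces), computes $\Ad_{u_x}(i\Id_V)$ in the vector-field realization, identifies $\proj_{\mf k}$ with taking the linear ($\mf l$-graded) part to get $i\,(\Id_V - D_{x^{-\overline x},\,\overline x})$, and then reaches \eqref{eq:MomentMapFormula1} by the identities \JP{28}, \JP{30} and \eqref{eq:MomentMapFormula2} by the spectral decomposition. The only differences are organizational — you derive the lift from $\theta(u)=u$ and the flip identity instead of writing it down and verifying, and you expand $\Ad_{u_x}$ by brackets grading by grading rather than by the chain-rule formula for vector fields — and your stated intermediate outputs ($w=\overline x$, $\ell_x$ the positive square root of $\B{x}{-\overline x}$, the $\mf l$-component $i(\Id_V - D_{x,\ell_x^{-\#}\overline x})$, and the Peirce-block scalars) all check out against the paper's.
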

\begin{proof}
Let $\pi:U\to X\cong U/K$ be the canonical projection of $U$ onto $X$ given by $u\mapsto u\GP{0}{0}$, then the embedding $\iota_0:V\hookrightarrow X$ of $V$ into $X$ admits a lift to $U$ given by
\[
	\varphi:V\to U,\; x\mapsto
		u_x:=\trans{x}\circ\B{x}{-\overline x}^{\half}\circ\qtrans{\overline x}\;,
\]
Indeed, since $\qtrans{\overline x}\GP{0}{0} = \GP{0}{\overline x} = \GP{0}{0}$, we obtain
\[
	\pi\circ\varphi(x)
		= u_x\GP{0}{0}
		= \trans{x}\circ\B{x}{-\overline x}^{\half}\GP{0}{0}
		= \trans{x}\GP{0}{0}
		= \GP{x}{0}
		=\iota_0(x)\;.
\]
Therefore, the moment map on $V\subseteq X$ is given by $\tilde\mu:V\to\mf u$ with $\tilde\mu^x:=\tilde\mu(x)=\Ad_{u_x}(i\Id_V)$. Recall that all Lie algebras are realized as vector fields on $V$, and that the adjoint action reads
\[
	(\Ad_{u^{-1}}\zeta)(z) = du(z)^{-1}\cdot\zeta(u(z))\;.
\]
Explicitly, since $u_x^{-1}(z) = \trans{-x}\B{x}{-\overline x}^\half\qtrans{-\overline x}(z) = \B{x}{-\overline x}^\half z^{-\overline x} - x$, we obtain
\begin{align*}
	\tilde\mu^x(z)
		&= i\,\big(\B{x}{-\overline x}^\half
							 \B{z}{-\overline x}^{-1}\big)^{-1}\cdot\big(i\,u_x^{-1}(z)\big)\\
		&= i\,\B{z}{-\bar x} z^{-\bar x} - i\, \B{z}{-\bar x}\B{x}{-\bar x}^{-\half} x \\
		&= i\big(z+Q_z\bar x - \B{z}{-\bar x}(x^{-\bar x})\big)\;.
\end{align*}
In the last step we used the identity $\B{x}{-\bar x}^{-\half} x = x^{-\bar x}$, which is obvious when $x$ is replaced by its spectral decomposition $x = \sum\sigma_j e_j$. Recall that the moment map $\tilde\mu_\mf k$ of the $K$-action on $V\subseteq X$ is given by the orthogonal projection of $\tilde\mu^x$ onto $\mf k$. The orthogonal projection yields the linear terms of the vector field $\tilde\mu^x$, therefore,
\begin{align*}
	\tilde\mu_\mf k^x(z) = d\tilde\mu^x(0)\cdot z = i\,(z-D_{z,\bar x}(x^{-\bar x}))
		= i\,(\Id_V - D_{x^{-\bar x},\,\bar x})(z)\;.
\end{align*}
From this, formula \eqref{eq:MomentMapFormula1} follows with a short calculation by using the relations $\B{x}{y}D_{x^y,z} = D_{x,z}-Q_xQ_{y,z}$ (\JP{30}) and $\B{x}{y}Q_{x^y} = Q_{x^y}\B{y}{x} = Q_x$ (\JP{28}), and formula \eqref{eq:MomentMapFormula2} follows by using the spectral decomposition $x = \sum\sigma_je_j$. This completes the proof.
\end{proof}

For the extension of the result of Proposition~\ref{prop:ExplicitMomentMap1} to all of $X$, we need the following operator.

\begin{lemma}\label{lem:GammaOperator}
	For $(x,a)\in(V,V')$, let
	$\Gamma_{x,a}:=\B{x}{a}\B{x^a}{-{\overline x}^{\overline a}}\B{\overline a}{\overline x}$.
	Then
	\begin{enumerate}\renewcommand{\labelenumi}{\normalfont(\alph{enumi})}
		\item $\Gamma_{x,a}$ depends polynomially on $x,\overline x,a,\overline a$.
		\item If $\GP{x}{a} = \GP{z}{b}$, then
					$\Gamma_{z,b}
						= \B{x}{a-b}^{-1}\Gamma_{x,a}\B{\overline a-\overline b}{\overline x}^{-1}$.
		\item $\Gamma_{x,a}$ is a positive definite operator on $V$ with respect to the
					inner product $(-|-)$ defined in \eqref{eq:innerproduct}.
	\end{enumerate}
	Mutatis mutandis, the same results hold for the adjoint operator 
	$\Gamma_{\overline x,\overline a}:=\Gamma_{x,a}^\#
		= \B{\overline x}{\overline a}\B{{\overline x}^{\overline a}}{-x^a}\B{a}{x}\in\End(V')$
	where the adjoint is taken with respect to the trace form $\tau$.
\end{lemma}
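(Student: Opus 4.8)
The plan is to establish (b) first, deduce (a) and (c) from it, and dispatch the adjoint statement by symmetry. Throughout I use four facts, all valid as identities of rational maps: the two Bergman addition formulae $\B{x}{y+z}=\B{x}{y}\B{x^y}{z}$ and, by taking $\#$-adjoints via $\B{x}{y}^\#=\B{y}{x}$, $\B{u+v}{w}=\B{v}{w^u}\B{u}{w}$, whence $\B{x}{y}^{-1}=\B{x^y}{-y}$; the compatibility $\overline{x^a}=\overline x^{\overline a}$ of the involution with quasi-inverses (since $\theta$ intertwines $D,Q$ and hence the Bergman operators); and the inner-product adjoint $\B{x}{a}^*=\B{\overline a}{\overline x}$, which follows from $T^*=\theta\,T^\#\theta$ together with $\B{x}{a}^\#=\B{a}{x}$. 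I record that on the quasi-invertible locus the definition rearranges, using $\overline x^{\overline a}=\overline{x^a}$ and $\B{\overline a}{\overline x}=\B{x}{a}^*$, as
\[
  \Gamma_{x,a}=\B{x}{a}\,\B{x^a}{-\overline{x^a}}\,\B{x}{a}^{*},\qquad\text{so that}\qquad\Gamma_{x,0}=\B{x}{-\overline x}.
\]

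For (b), suppose $\GP{x}{a}=\GP{z}{b}$, so $(x,a-b)$ is quasi-invertible and $z=x^{a-b}$. The quasi-inverse addition rule gives $z^b=x^a$, and applying $\theta$ gives $\overline z=\overline x^{\overline a-\overline b}$ and $\overline z^{\overline b}=\overline x^{\overline a}$. Hence the middle factor of $\Gamma_{z,b}$, namely $\B{z^b}{-\overline{z^b}}=\B{x^a}{-\overline{x^a}}$, is identical to that of $\Gamma_{x,a}$, and it remains only to match the outer factors. This is exactly the two addition formulae: the second-variable one gives $\B{z}{b}=\B{x^{a-b}}{b}=\B{x}{a-b}^{-1}\B{x}{a}$, and the first-variable one (with $u=\overline a-\overline b$, $v=\overline b$, $w=\overline x$) gives $\B{\overline b}{\overline z}=\B{\overline b}{\overline x^{\overline a-\overline b}}=\B{\overline a}{\overline x}\,\B{\overline a-\overline b}{\overline x}^{-1}$. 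Substituting both into $\Gamma_{z,b}=\B{z}{b}\,\B{x^a}{-\overline{x^a}}\,\B{\overline b}{\overline z}$ yields precisely $\Gamma_{z,b}=\B{x}{a-b}^{-1}\,\Gamma_{x,a}\,\B{\overline a-\overline b}{\overline x}^{-1}$.

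For (a), the direct route is to clear the two denominators $\Delta(x,a)^{-1}$ and $\Delta(\overline x,\overline a)^{-1}$ hidden in $x^a$ and $\overline x^{\overline a}$. Expanding $\B{x^a}{-\overline{x^a}}=\Id+D_{x^a,\overline{x^a}}+Q_{x^a}Q_{\overline{x^a}}$ and multiplying by $\B{x}{a}$ on the left, the identities $\B{x}{a}Q_{x^a}=Q_x$ (\JP{28}) and $\B{x}{a}D_{x^a,w}=D_{x,w}-Q_xQ_{a,w}$ (\JP{30}) eliminate $x^a$; multiplying the result by $\B{\overline a}{\overline x}=\B{x}{a}^{*}$ on the right, the $\#$-adjoints (and polarizations) of the same two identities eliminate $\overline{x^a}$, leaving a genuine polynomial in $x,\overline x,a,\overline a$. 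Since it agrees with $\Gamma_{x,a}$ on the dense quasi-invertible locus, it \emph{is} $\Gamma_{x,a}$. I expect this bookkeeping --- tracking which identity clears which denominator --- to be the most tedious step; note one cannot shortcut it via (b), since the transition $(x,a-b)\mapsto(x^{a-b},b)$ does not leave the non-quasi-invertible locus.

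For (c), the point is that $\B{x}{a}^{*}=\B{\overline a}{\overline x}$ turns the law (b) into a \emph{congruence}: with $S=\B{x}{a-b}^{-1}$ one has $S^{*}=\B{\overline a-\overline b}{\overline x}^{-1}$, so $\Gamma_{z,b}=S\,\Gamma_{x,a}\,S^{*}$, and positive definiteness is invariant along $R$. Fix $(x,a)$; by Proposition~\ref{prop:Representatives} write $\GP{x}{a}=\GP{e+z}{\overline e}$ with a tripotent $\b e=(e,\overline e)$ and $z\in V_0(\b e)$. By the definition of $R$ the pair $(x,a-\overline e)$ is quasi-invertible, so $\B{x}{a-\overline e}$ is invertible and (b) gives $\Gamma_{x,a}=\B{x}{a-\overline e}\,\Gamma_{e+z,\overline e}\,\B{x}{a-\overline e}^{*}$; thus it suffices to show $\Gamma_{e+z,\overline e}$ is positive definite. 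This is the genuine obstacle, since $(e+z,\overline e)$ is \emph{not} quasi-invertible when $V_1(\b e)\neq\{0\}$, so the convenient factorization is unavailable. Using the polynomial form from (a) --- equivalently the limit as $t\uparrow1$ of the positive-definite operators $\Gamma_{e+z,t\overline e}$ (for $0<t<1$ the pairs are quasi-invertible, with manifestly positive middle factor $\B{w}{-\overline w}$, whose eigenvalues on the joint Peirce spaces of $w=\sum\sigma_je_j$ are $1+\sigma_i\sigma_j>0$, as in Proposition~\ref{prop:ExplicitMomentMap1}) --- one evaluates $\Gamma_{e+z,\overline e}$ on the joint Peirce spaces of the orthogonal tripotent system given by $e$ and the spectral tripotents of $z=\sum_j\rho_jc_j$, finding it diagonal with strictly positive eigenvalues (values $1$ and $1+\rho_i\rho_j$ on the blocks). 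Hence $\Gamma_{e+z,\overline e}$, and with it $\Gamma_{x,a}$, is positive definite. Finally, taking $\#$ of the defining product and using $\B{x}{a}^\#=\B{a}{x}$ and $\B{-u}{v}=\B{u}{-v}$ gives $\Gamma_{x,a}^\#=\B{\overline x}{\overline a}\,\B{\overline x^{\overline a}}{-x^a}\,\B{a}{x}$, which is the $\Gamma$-operator of the opposite pair $(V',V)$ at $(\overline x,\overline a)$; so the analogues of (a)--(c) for it follow verbatim, now relative to the inner product on $V'$.
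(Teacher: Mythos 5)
Your argument is correct in substance, and on parts (b), (c) and the adjoint statement it runs along essentially the same lines as the paper's: the paper also gets (b) from the two Bergman addition formulas (it dispatches this in one line; your computation fills in exactly those details), and proves (c) by precisely your reduction --- self-adjointness upgraded to a congruence via $\B{x}{a}^*=\B{\overline a}{\overline x}$, passage to a representative $(e+z,\overline e)$ through Proposition~\ref{prop:Representatives}, and evaluation of the limit $\lim_{t\to 1}\Gamma_{e+z,t\overline e}$, which the paper computes in closed form to be $\B{z}{-\overline z}$. The genuine divergence is in (a). The paper sidesteps your bookkeeping altogether: it introduces the four-variable operator $\Gamma'_{x,a,y,b}:=\B{x}{a}\B{x^a}{-y^b}\B{b}{y}$, with $(y,b)$ treated as variables independent of $(x,a)$, and the two addition formulas give $\Gamma'=\B{x}{a-y^b}\B{b}{y}$, manifestly polynomial in $(x,a)$, and the mirror factorization $\Gamma'=\B{x}{a}\B{b-x^a}{y}$, manifestly polynomial in $(y,b)$; separate polynomiality (with bounded degrees) of a rational map gives joint polynomiality, and one specializes $(y,b)=(\overline x,\overline a)$. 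Your direct route does close, but it needs one identity beyond those you name: after clearing with \JP{28}, \JP{30} and their adjoints, the surviving cross term is $Q_xQ_{a,\overline{x^a}}\B{\overline a}{\overline x}$, and to kill it one needs the pointwise (polarized) form of \JP{30}, namely $Q_{\overline x^{\overline a},a}\B{\overline a}{\overline x}=Q_{\overline x,a}-D_{a,\overline a}Q_{\overline x}$. What your route buys is an explicit polynomial formula for $\Gamma_{x,a}$; what the paper's trick buys is brevity. Your side remark that (a) cannot be reduced to the quasi-invertible case via (b) is correct and worth keeping: since $\B{z}{b}=\B{x}{a-b}^{-1}\B{x}{a}$, quasi-invertibility is constant along each equivalence class.

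Two slips, neither fatal, plus one omission. First, the eigenvalues of $\B{w}{-\overline w}$ on the joint Peirce spaces $V_{ij}$ of $w=\sum_j\sigma_je_j$ are $(1+\sigma_i^2)(1+\sigma_j^2)$ (Loos, \S3.15, as quoted in the paper), not $1+\sigma_i\sigma_j$; positivity is of course unaffected. Second, $(e+z,\overline e)$ fails to be quasi-invertible whenever $e\neq 0$, since $\B{e+z}{\overline e}$ annihilates all of $V_2(e)$ --- not merely when $V_1(\b e)\neq\{0\}$; this only reinforces your point that a limit argument is unavoidable there. Finally, since you prove (b) as an identity of rational maps but then apply it in (c) at pairs $(x,a)$ that need not be quasi-invertible, you should add the (routine) density argument: once (a) is established, both sides of (b) are continuous on the locus where $(x,a-b)$ is quasi-invertible and agree on the dense open subset where $(x,a)$ is also quasi-invertible, hence everywhere on that locus.
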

\begin{proof}
The proof of (a) and (b) relies on the fact that the Bergman operator satisfies the relations $\B{u}{v}\B{u^v}{w} = \B{u}{v+w}$ and $\B{w}{u^v}\B{v}{u} = \B{v+w}{u}$. For (a), consider the operator $\Gamma'_{x,a,y,b}:=\B{x}{a}\B{x^a}{-{y}^{b}}\B{b}{y}$ with $(x,a)\in(V,V')$ and $(y,b)\in (V',V)$. Then
\[
	\Gamma'_{x,a,y,b}=\B{x}{-{y}^{b}+a}\B{b}{y}
		=\B{x}{a}\B{-x^a+b}{{y}^{b}}\;,
\]
and the first of these identities implies that $\Gamma'_{x,a,y,b}$ is polynomial in $(x,a)\in V$, whereas the second identity implies that $\Gamma'_{x,a,y,b}$ is polynomial in $(y,b)\in(V',V)$. Therefore, $\Gamma_{x,a} =\Gamma'_{x,a,\overline x,\overline a}$ is polynomial in $x,a,\overline x,\overline a$. Part (b) is a simple application of the identities statisfied by the Bergman operators, since $\GP{z}{b} = \GP{x}{a}$ implies $z = x^{a-b}$. For (c), we first note that $\Gamma_{x,a}$ is self-adjoint with respect to $(-|-)$. Due to (b) and Proposition~\ref{prop:Representatives}, it suffices to prove (c) for $(x,a) = (e+z,\overline e)$ for some tripotent $e\in\Tri$ and $z\in V_0(e)$. We calculate $\Gamma_{e+z,\overline e}$ by means of the limit $\lim_{t\to 1}\Gamma_{e+z,\overline{te}}$ with $t\in\RR\setminus\{1\}$. Using the relation $(e+z)^{\overline{te}} = \frac{1}{1-t}\, e + z$ and the Peirce rules, it is straightforward to obtain
\begin{align}\label{eq:GammaOperator}
	\Gamma_{e+z,\overline e} = \lim_{t\to 1}\Gamma_{e+z,\overline{te}}
		= \lim_{t\to 1}\B{(1-t)e}{-(1-t)\overline e}\B{z}{-\overline z}
		= \B{z}{-\overline z}\;.
\end{align}
Let $z = \sum_{i=1}^k\sigma_ie_i$ be the spectral decomposition of $z$, and let $V = \bigoplus_{0\leq i\leq j\leq k}V_{ij}$ be the joint Peirce decomposition with respect to the orthogonal system of tripotents $(e_1,\ldots,e_k)$. Then \cite[\S3.15]{Lo77}, the relation $\B{z}{-\overline z}v_{ij} = (1+\sigma_i^2)(1+\sigma_j^2)v_{ij}$ holds for all $v_{ij}\in V_{ij}$, where $\lambda_0:=0$. This shows that $\B{z}{-\overline z}$ and hence $\Gamma_{e+z,\overline z}$ is a positive definite operator on $V$.
\end{proof}

\begin{thm}\label{thm:ExplicitMomentMap}
	The moment map $\tilde\mu_\mf k:X\to\mf k$ of the $K$-action on $X$ is given by
	\begin{align}\label{eq:MomentMapFormula3}
		\tilde\mu_\mf k(\GP{x}{a})
			= i\,\big(\B{\overline a}{\overline x}\Gamma_{x,a}^{-1}\B{x}{a}
				- Q_x\Gamma_{\overline x,\overline a}^{-1}Q_{\overline x}\big)
	\end{align} 
	If $(e+z,\overline e)\in\GP{x}{a}$ is a representative as in 
	Proposition~\ref{prop:Representatives}, and $z=\sum_{j=1}^k\sigma_je_j$ is the spectral 
	decomposition of $z$, then
	\begin{align}\label{eq:MomentMapFormula4}
		\tilde\mu_\mf k(\GP{x}{a})
			= i\left(\Id_V - 
				 \sum_{j=1}^k \tfrac{\sigma_j^2}{1+\sigma_j^2}\,D_{e_j,\overline e_j}
				 - D_{e,\bar e}\right)\;.
	\end{align}
\end{thm}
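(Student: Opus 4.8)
The plan is to establish the two displayed formulas by independent routes, each of which reduces the computation on all of $X$ to the formula on the dense chart $V=\iota_0(V)$ already obtained in Proposition~\ref{prop:ExplicitMomentMap1}. For \eqref{eq:MomentMapFormula3} I would verify that its right-hand side defines a well-defined continuous $\mf k$-valued function on $X$ and that it coincides with $\tilde\mu_\mf k$ on $V$; since $V$ is open and dense and $\tilde\mu_\mf k$ is smooth, equality on all of $X$ follows. For \eqref{eq:MomentMapFormula4} I would use a one-parameter limit built from the quasi-inverse, which reuses \eqref{eq:MomentMapFormula2} directly and avoids evaluating \eqref{eq:MomentMapFormula3} at the representative $(e+z,\overline e)$.

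Agreement of \eqref{eq:MomentMapFormula3} with $\tilde\mu_\mf k$ on $V$ is the case $a=0$: from $\Gamma_{x,a}=\B{x}{a}\B{x^a}{-{\overline x}^{\overline a}}\B{\overline a}{\overline x}$ one reads off $\Gamma_{x,0}=\B{x}{-\overline x}$ and $\Gamma_{\overline x,\overline 0}=\Gamma_{x,0}^\#=\B{-\overline x}{x}$, so that \eqref{eq:MomentMapFormula3} collapses to \eqref{eq:MomentMapFormula1}. The substantial point is well-definedness, i.e.\ that the right-hand side of \eqref{eq:MomentMapFormula3} depends only on the class $\GP{x}{a}$. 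Given $\GP{x}{a}=\GP{z}{b}$, so $z=x^{a-b}$, I would combine the transformation rule of Lemma~\ref{lem:GammaOperator}(b), namely $\Gamma_{z,b}=\B{x}{a-b}^{-1}\Gamma_{x,a}\B{\overline a-\overline b}{\overline x}^{-1}$, with the Bergman cocycle identities $\B{u}{v}\B{u^v}{w}=\B{u}{v+w}$ and $\B{w}{u^v}\B{v}{u}=\B{v+w}{u}$, and the quadratic-operator identities relating $Q_z,Q_{\overline z}$ to $Q_x,Q_{\overline x}$ through the quasi-inverse (\JP{28}), so that each of the two summands of \eqref{eq:MomentMapFormula3} is seen to be invariant. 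Continuity is then automatic: on each chart $\iota_a(V)$ the operators $\B{x}{a}$, $\B{\overline a}{\overline x}$, $Q_x$, $Q_{\overline x}$ are polynomial, while $\Gamma_{x,a}$ is polynomial by Lemma~\ref{lem:GammaOperator}(a) and invertible by the positive-definiteness in Lemma~\ref{lem:GammaOperator}(c); well-definedness makes these local expressions glue to one global continuous map. (Membership in $\mf k$ then follows for free from the eventual equality with $\tilde\mu_\mf k$.)

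For \eqref{eq:MomentMapFormula4} I would fix a representative $(e+z,\overline e)$ as in Proposition~\ref{prop:Representatives}, with $e$ a tripotent, $z\in V_0(e)$ and spectral decomposition $z=\sum_{j=1}^k\sigma_j e_j$. For real $t<1$ the quasi-inverse identity $(e+z)^{\overline{te}}=\tfrac{1}{1-t}e+z$ (already used in the proof of Lemma~\ref{lem:GammaOperator}) shows $\GP{e+z}{\overline{te}}=\GP{\tfrac{1}{1-t}e+z}{0}\in V$, a point whose spectral decomposition is $\tfrac{1}{1-t}e+\sum_{j=1}^k\sigma_j e_j$ once $t$ is close enough to $1$ that $\tfrac{1}{1-t}$ exceeds every $\sigma_j$. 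Applying \eqref{eq:MomentMapFormula2} gives
\[
	\tilde\mu_\mf k(\GP{e+z}{\overline{te}})
		= i\Big(\Id_V - \tfrac{(1-t)^{-2}}{1+(1-t)^{-2}}\,D_{e,\overline e}
			- \sum_{j=1}^k\tfrac{\sigma_j^2}{1+\sigma_j^2}\,D_{e_j,\overline e_j}\Big)\;.
\]
Since the projection $V\times V'\to X$ is continuous, $\GP{e+z}{\overline{te}}\to\GP{e+z}{\overline e}$ as $t\to 1$, and $\tilde\mu_\mf k$ is continuous, while the coefficient $(1-t)^{-2}/(1+(1-t)^{-2})\to 1$; the limit therefore yields exactly \eqref{eq:MomentMapFormula4}. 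As every point of $X$ admits such a representative, this determines $\tilde\mu_\mf k$ on all of $X$.

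The main obstacle is the representative-independence of \eqref{eq:MomentMapFormula3}: the two summands $\B{\overline a}{\overline x}\Gamma_{x,a}^{-1}\B{x}{a}$ and $Q_x\Gamma_{\overline x,\overline a}^{-1}Q_{\overline x}$ each transform nontrivially under a change of representative, and only their difference is invariant. Making this precise requires matching the conjugations coming from Lemma~\ref{lem:GammaOperator}(b) against the Bergman and quadratic-operator cocycle identities with some care; everything else is either a direct specialization ($a=0$) or a routine limit.
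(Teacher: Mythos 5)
Your proposal is correct and takes essentially the same route as the paper: formula \eqref{eq:MomentMapFormula3} is established by checking the $a=0$ specialization against \eqref{eq:MomentMapFormula1} together with representative-independence via Lemma~\ref{lem:GammaOperator}(b), the Bergman cocycle identities and \JP{28}, and formula \eqref{eq:MomentMapFormula4} follows by the continuity limit $\GP{e+z}{\overline e}=\lim_{t\to 1}\GP{\tfrac{1}{1-t}e+z}{0}$ applied to \eqref{eq:MomentMapFormula2}, exactly as in the paper. One small correction to your closing remark: the computation shows that \emph{each} of the two summands $\B{\overline a}{\overline x}\Gamma_{x,a}^{-1}\B{x}{a}$ and $Q_x\Gamma_{\overline x,\overline a}^{-1}Q_{\overline x}$ is individually invariant under a change of representative (as the body of your own argument asserts), not merely their difference.
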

\begin{proof}
For \eqref{eq:MomentMapFormula3}, it suffices to check that $\tilde\mu_\mf k(\GP{x}{a})$ is well-defined and that its restriction to $V\subseteq X$ coincides with \eqref{eq:MomentMapFormula1}. The latter part is easy to see, since for $a=0$, we obtain $\B{x}{0} = \Id_V$ and $\Gamma_{x,0} = \B{x}{-\bar x}$. To prove that $\tilde\mu_\mf k(\GP{x}{a})$ is well-defined, let $\GP{x}{a} = \GP{z}{b}$, so $z = x^{a-b}$. Due to Lemma~\ref{lem:GammaOperator}, $\Gamma_{z,b} = \B{x}{a-b}^{-1}\Gamma_{x,a}\B{\overline a-\overline b}{\overline x}^{-1}$, and hence
\begin{align*}
	\tilde\mu_\mf k(\GP{z}{b})
			&= i\,\big(\B{\overline b}{\overline z}\Gamma_{z,b}^{-1}\B{z}{b}
				- Q_z\Gamma_{\overline z,\overline b}^{-1}Q_{\overline z}\big)\\
			&= i\,\big(\B{\overline b}{{\overline x}^{\overline a-\overline b}}
								 \B{\overline a-\overline b}{\overline x}\Gamma_{x,a}^{-1}
								 \B{x}{a-b}\B{x^{a-b}}{b}
							 + Q_{x^{a-b}}
								 \B{a-b}{x}\Gamma_{\overline x,\overline a}^{-1}
								 \B{\overline x}{\overline a-\overline b}
								 Q_{{\overline x}^{\overline a-\overline b}}\big)\\
			&=i\,\big(\B{\overline a}{\overline x}\Gamma_{x,a}^{-1}\B{x}{a}
				- Q_x\Gamma_{\overline x,\overline a}^{-1}Q_{\overline x}\big)\\
			&=\tilde\mu_\mf k(\GP{x}{a})\;.
\end{align*}
We note that continuity also yields the condition $\tilde\mu_\mf k(\GP{x}{a})\in\mf k$. The identity \eqref{eq:MomentMapFormula4} follows by continuity from \eqref{eq:MomentMapFormula2}, since $\GP{x}{a} = \GP{e+z}{\overline e} = \lim_{t\to 1}\GP{\tfrac{1}{1-t}e + z}{0}$, and the spectral decomposition of $\tfrac{1}{1-t}e + z$ is just $\tfrac{1}{1-t}e + z = \tfrac{1}{1-t}e + \sum\sigma_je_j$, since $z\in V_0(e)$.
\end{proof}

\begin{cor}\label{cor:MomentMapImage}
	The image of the moment map $\tilde\mu_\mf k$ in $\mf k$ is given by
	\[
		\tilde\mu_\mf k(X) = \Set{i\,\big(\Id_V - \sum_{i=1}^r\nu_i D_{e_i,\overline e_i}\big)}
			{\begin{aligned}
				&(e_1,\ldots,e_r)\text{ frame of tripotents,} \\
				&(\nu_1,\ldots,\nu_r)\in[0,1]^r
			 \end{aligned}}\;.
	\]
\end{cor}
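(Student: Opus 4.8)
The plan is to read off both inclusions directly from the explicit formula \eqref{eq:MomentMapFormula4} of Theorem~\ref{thm:ExplicitMomentMap}, the only additional input being the behaviour of the operators $D_{e,\overline e}$ under orthogonal decomposition of tripotents. First I would record the elementary fact that for a system of pairwise orthogonal tripotents $(c_1,\ldots,c_m)$ one has $D_{c_i,\overline{c_j}}=0$ whenever $i\neq j$: indeed $c_i\in V_2(c_i)$ while $\overline{c_j}\in V'_0(c_i)$ by orthogonality, so the Peirce rule $\JTP{V_2}{V'_0}{V}=\{0\}$ forces the term to vanish. Consequently $D_{c_1+\cdots+c_m,\,\overline{c_1+\cdots+c_m}}=\sum_i D_{c_i,\overline{c_i}}$; in particular, for a tripotent $e$ of rank $m$ with decomposition $e=f_1+\cdots+f_m$ into primitive orthogonal tripotents, one has $D_{e,\overline e}=\sum_i D_{f_i,\overline{f_i}}$.

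For the inclusion of $\tilde\mu_\mf k(X)$ in the set on the right-hand side of the claimed identity, I would start from a representative $(e+z,\overline e)$ as in Proposition~\ref{prop:Representatives} together with the spectral decomposition $z=\sum_j\sigma_j e_j$. Since $z\in V_0(e)$ and the $e_j$ are real linear combinations of powers of $z$, the tripotents occurring in $e$ and those occurring in $z$ together constitute an orthogonal system of primitive tripotents, which I extend to a full frame $(h_1,\ldots,h_r)$ by adjoining primitive tripotents carrying coefficient $0$. The coefficient attached to $h_i$ is $1$ when $h_i$ comes from $e$, and $\tfrac{\sigma_j^2}{1+\sigma_j^2}\in[0,1)$ when $h_i=e_j$ comes from $z$; in either case it lies in $[0,1]$, so \eqref{eq:MomentMapFormula4} already exhibits $\tilde\mu_\mf k(\GP{x}{a})$ in the asserted form.

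For the reverse inclusion I would simply invert this construction for an \emph{arbitrary} frame $(e_1,\ldots,e_r)$ of primitive tripotents and an arbitrary vector $(\nu_1,\ldots,\nu_r)\in[0,1]^r$. Split the index set according to whether $\nu_i=1$ or $\nu_i<1$, and put $e:=\sum_{\nu_i=1}e_i$ (a tripotent) and $z:=\sum_{\nu_i<1}\sigma_i e_i$ with $\sigma_i:=\sqrt{\nu_i/(1-\nu_i)}\in[0,\infty)$; this is well defined precisely because $\sigma\mapsto\tfrac{\sigma^2}{1+\sigma^2}$ is a bijection $[0,\infty)\to[0,1)$. The $e_i$ entering $z$ are orthogonal to $e$, so $z\in V_0(e)$ and $\GP{e+z}{\overline e}$ is a genuine point of $X$. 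Grouping the $\sigma_i$ into their distinct nonzero values recovers the actual spectral decomposition of $z$, and feeding $(e+z,\overline e)$ into \eqref{eq:MomentMapFormula4}, then using the additivity of $D_{\cdot,\overline{\cdot}}$ once more to recombine constituent primitive tripotents, returns exactly $i\big(\Id_V-\sum_{i=1}^r\nu_i D_{e_i,\overline e_i}\big)$. (Alternatively one could realize a single frame and invoke $K$-equivariance of $\tilde\mu_\mf k$ together with transitivity of $K$ on frames, but the direct construction makes this superfluous.)

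The computation is short, and I do not expect a genuine obstacle so much as one conceptual point that must be handled with care: the appearance of the boundary value $\nu_i=1$. The spectral term $\tfrac{\sigma_j^2}{1+\sigma_j^2}$ only ever yields coefficients strictly below $1$, so the closed faces $\{\nu_i=1\}$ are attained solely through the tripotent $e$ supplied by Proposition~\ref{prop:Representatives}, that is, through the points of $X$ lying on the boundary $X\setminus V$ ``at infinity''. Keeping track of this rank-at-infinity contribution, together with the passage back and forth between the spectral decomposition (with its distinct eigenvalues and possibly non-primitive tripotents) and decompositions into primitive tripotents, is the only place demanding attention.
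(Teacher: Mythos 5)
Your proof is correct and takes essentially the same route as the paper, which states this corollary as an immediate consequence of formula \eqref{eq:MomentMapFormula4} in Theorem~\ref{thm:ExplicitMomentMap}: both inclusions are read off from that formula, exactly as you do. Your write-up merely makes explicit the details the paper leaves implicit (vanishing of $D_{c_i,\overline{c_j}}$ for orthogonal tripotents and the resulting additivity of $D_{e,\overline e}$ over primitive constituents, completion of an orthogonal system of primitive tripotents to a frame, the inversion $\sigma_i=\sqrt{\nu_i/(1-\nu_i)}$ for the reverse inclusion, and the boundary case $\nu_i=1$ arising from the tripotent $e$ ``at infinity''), all of which are handled correctly.
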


In the last part of this section, we return to the moment map $\mu_\mf k$ in its original form with image in $\mf k^*$, and determine the intersection of $\mu_\mf k(X)$ with the closed positive Weyl chamber $i\mf c\subseteq \mf k^*$ (with respect to $\Phi_L^+$), i.e.,
\[
	\mf c := \Set{\alpha\in(i\mf t)^*}{\alpha(H_\beta)\geq 0\text{ for all $\beta\in\Phi_L^+$}}\;.
\]
Recall from Section~\ref{subsec:PrequantumBundle} that real valued functionals $\alpha\in(i\mf t)^*$ are identified with their complex extensions to functionals on $\mf l$ (zero-extension on the orthogonal complement of $i\mf t$), and the restriction of $i\alpha$ to $\mf k$ is a real valued functional on $\mf k$, so $i\mf c\subseteq\mf k^*$.

\begin{thm}
	The intersection of the image of the moment map $\mu_\mf k$ with $i\mf c$, is given by
	\[
		\mu_\mf k(X)\cap i\mf c
			= \Set{i(\lambda + \sum_{j=1}^r\nu_j\gamma_j)\in\mf t^*}
						{1\geq\nu_1\geq\ldots\geq\nu_r\geq 0}
			=:\Pi_\mf s\;.
	\]
	In particular, this is a convex polytope.
\end{thm}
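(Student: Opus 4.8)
The plan is to transport the explicit description of the image of $\tilde\mu_\mf k$ in Corollary~\ref{cor:MomentMapImage} over to $\mf k^*$ via $\vartheta$, and then read off which $\Ad^*(K)$-representatives land in the chamber $i\mf c$. First I would compute the effect of $\vartheta$ on the building blocks appearing in the Corollary. By Lemma~\ref{lem:DualFundamWeight} we have $\vartheta(i\Id_V) = i\lambda$, and the normalization of $\vartheta$ together with \eqref{eq:GammaIdentity} gives $\gamma_j = \tfrac{1}{2p}\,\kappa(H_{\gamma_j},\Box)$ on $\mf t$ (checked by evaluating both sides on $\Id_V$ and on the $H_{\gamma_k}=D_{e_k,\overline e_k}$, using strong orthogonality of the frame, which amounts to $\kappa(H_{\gamma_j},H_{\gamma_k})=4p\,\delta_{jk}$), so that $\vartheta(iH_{\gamma_j}) = -i\gamma_j$. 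Hence, for the standard frame,
\[
	\vartheta\big(i(\Id_V - \textstyle\sum_j\nu_j D_{e_j,\overline e_j})\big) = i\big(\lambda + \textstyle\sum_j\nu_j\gamma_j\big)=:p_\nu\;.
\]
Since $\mu_\mf k$ corresponds to $\tilde\mu_\mf k$ under $\vartheta$ and $K$ acts transitively on frames of tripotents, applying $\vartheta$ to Corollary~\ref{cor:MomentMapImage} shows that $\mu_\mf k(X)$ is exactly the $\Ad^*(K)$-saturation of $\Set{p_\nu}{\nu\in[0,1]^r}$.

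For the inclusion $\Pi_\mf s\subseteq\mu_\mf k(X)\cap i\mf c$ I would argue as follows. The map $\nu\mapsto i(\lambda+\sum_j\nu_j\gamma_j)$ is affine and injective (the $\gamma_j$ are linearly independent), and it carries the order simplex $\{1\geq\nu_1\geq\cdots\geq\nu_r\geq 0\}$, whose extreme points are the $0/1$-vectors $(\underbrace{1,\ldots,1}_k,0,\ldots,0)$, onto the convex hull of the points $i\lambda_k$ with $\lambda_k=\lambda+\gamma_1+\cdots+\gamma_k$. Thus $\Pi_\mf s$ is a simplex, which already settles the final assertion that it is a convex polytope. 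Each $p_\nu$ with $\nu$ decreasing lies in $\mu_\mf k(X)$ by the first paragraph, and by Proposition~\ref{P: niceterm} each vertex $i\lambda_k$ is the highest weight of the $L$-type $H^0(X_k,\linebundle|_{X_k})$, hence dominant, i.e.\ $i\lambda_k\in i\mf c$. Since $i\mf c$ is convex, $\Pi_\mf s=\mathrm{conv}(i\lambda_0,\ldots,i\lambda_r)\subseteq i\mf c$, and therefore $\Pi_\mf s\subseteq\mu_\mf k(X)\cap i\mf c$.

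The reverse inclusion is where the Weyl-group input enters. Given $q\in\mu_\mf k(X)\cap i\mf c$, the first paragraph shows $q$ is $\Ad^*(K)$-conjugate to some $p_\nu$; as both $q$ and $p_\nu$ lie in $(i\mf t)^*$, they are conjugate under the Weyl group $W_\mf l$ of $\mf l$. The key structural fact is that $S_r$ sits inside $W_\mf l$ and acts by permuting $\gamma_1,\ldots,\gamma_r$ while fixing $\lambda$: by \cite[Lemma 13]{HC56} (as in Lemma~\ref{lem:BorelContained}) the roots of $\mf l$ restrict on $\mf s'$ to the system $\{\tfrac12(\gamma_i-\gamma_j)\}$ of type $A_{r-1}$, whose Weyl group is $S_r$, and $\lambda\leftrightarrow i\Id_V$ is central in $\mf k$ and hence $W_\mf l$-fixed. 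Therefore $q\sim_{W_\mf l}p_{\nu^\downarrow}$, where $\nu^\downarrow$ is the decreasing rearrangement of $\nu$. By the previous paragraph $p_{\nu^\downarrow}\in\Pi_\mf s$ is dominant, $q$ is dominant by hypothesis, and a $W_\mf l$-orbit meets the closed chamber in a single point; hence $q=p_{\nu^\downarrow}\in\Pi_\mf s$, completing the proof.

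I expect the main obstacle to be the identification in the third paragraph of the relevant subgroup of $W_\mf l$ with $S_r$ acting by permutations of the $\gamma_j$ — that is, pinning down the restricted root system of $\mf l$ on $\mf s'$ and verifying that sorting the parameters $\nu_j$ into decreasing order is precisely what the chamber condition imposes. The $\vartheta$-bookkeeping of the first paragraph (the constant $\tfrac{1}{2p}$, equivalently $\kappa(H_{\gamma_j},H_{\gamma_j})=4p$) and the classical transitivity of $K$ on frames are the two supporting facts I would state carefully but which are routine.
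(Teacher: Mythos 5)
Your proof follows the same route as the paper's own: transporting Corollary~\ref{cor:MomentMapImage} through $\vartheta$ via $\vartheta(i\Id_V)=i\lambda$ (Lemma~\ref{lem:DualFundamWeight}) and $\vartheta(iD_{e_j,\overline e_j})=-i\gamma_j$, identifying $\Pi_\mf s$ with $\mathrm{conv}\{i\lambda_0,\ldots,i\lambda_r\}$ and placing it inside $i\mf c$ because its vertices are highest weights of $L$-types (Proposition~\ref{P: niceterm}), and concluding by uniqueness of dominant representatives of coadjoint $K$-orbits. The one place you genuinely deviate is the sorting step in your third paragraph, and there your justification has a real gap. First, the structural claim is factually wrong in general: by the paper's own Lemma~\ref{lem:BorelContained} (quoting \cite[Lemma~13]{HC56}), the roots of $\mf l$ restrict on $\mf s$ not only to $\tfrac12(\gamma_i-\gamma_j)$ but also to $-\gamma_i/2$; the latter occur whenever the joint Peirce spaces $V_{i0}$ are nonzero (the non-tube cases), so the restricted system is not of type $A_{r-1}$. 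Second, and more seriously, even with the restricted system corrected, you assert without proof that reflections of the restricted system are induced by elements of $W_\mf l$ acting on $\ft^*$; this transfer from restricted Weyl group to actual Weyl group is not automatic and is exactly the content that would need an argument (your stated conclusion is true, but proving it requires, e.g., taking a frame-permuting $k\in K$ and correcting it by an element of the centralizer of $\mf s'$ so that it normalizes $\ft$ --- none of which appears in your sketch).

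The gap is repairable using only tools you already invoke, and the repair is what the paper does. Since $K$ acts transitively on frames of tripotents and any permutation of a frame is again a frame, there is $k\in K$ with $ke_j=e_{\sigma(j)}$ for all $j$, and then $\Ad_k D_{e_j,\overline e_j}=D_{e_{\sigma(j)},\overline e_{\sigma(j)}}$ while $\Ad_k(i\Id_V)=i\Id_V$. Choosing $\sigma$ to sort the parameters shows directly that $p_\nu$ and $p_{\nu^\downarrow}$ are $\Ad^*(K)$-conjugate, with no reference to $W_\mf l$ or to restricted roots. Then $q$ and $p_{\nu^\downarrow}$ are two points of $i\mf c$ on the same coadjoint orbit, and since the closed chamber is a fundamental domain for the coadjoint action (\cite[Lemma~3.8.2]{DK00}, the fact the paper cites), $q=p_{\nu^\downarrow}\in\Pi_\mf s$. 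This is precisely how the paper absorbs the sorting: it asserts $\Ad_K(\tilde\Pi_\mf s)=\tilde\mu_\mf k(X)$ directly for the decreasing-parameter set $\tilde\Pi_\mf s$, justified by transitivity of $K$ on frames, and then applies the fundamental-domain property; your paragraphs one and two are otherwise identical in substance to the paper's two steps.
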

\begin{proof}
Let $(e_1,\ldots,e_r)$ be the frame of tripotents that is associated to the maximal abelian subalgebra $\mf t\subseteq\mf u$ as described in Section~\ref{subsec:Correspondence}. Consider
\[
	\tilde\Pi_\mf s:=\Set{i\big(\Id_V-\sum\nolimits_{i=1}^r\nu_iD_{e_i,\overline e_i}\big)}
											 {1\geq\nu_1\geq\ldots\geq\nu_r\geq 0}\subseteq\mf k\;.
\]
We first show that $\vartheta(\tilde\Pi_\mf s)=\Pi_\mf s$ with $\vartheta$ as in \eqref{eq:DualIdentification}. Indeed, Lemma~\ref{lem:DualFundamWeight} shows that $\vartheta(i\Id) = i\lambda$, and using \eqref{eq:GammaIdentity}, we obtain for all $T\in\mf k$,
\begin{align*}
	\vartheta(iD_{e_j,\overline e_j})(T)
		&= -\tfrac{i}{2p}\,\kappa(D_{e_j,\overline e_j},T)
		 = \tfrac{i}{2p}\,\kappa(T,[e_j,q_{\overline e_j}])\\
		&= \tfrac{i}{2p}\,\kappa(Te_j,q_{\overline e_j})
		 = -\tfrac{i}{p}\,\tau(Te_j,\overline e_j)
		 = -i\,\gamma_j(T)\;.
\end{align*}
Therefore, $\vartheta(iD_{e_j,\overline e_j}) = -i\gamma_j$, and we conclude that $\vartheta(\tilde\Pi_\mf s) = \Pi_\mf s$. Since $K$ acts transitively on the set of frames of tripotents, Corollary~\ref{cor:MomentMapImage} implies that $Ad_K(\tilde\Pi_\mf s) = \tilde\mu_\mf k(X)$, and since $\vartheta$ is $K$-equivariant, this also yields
\begin{align}\label{eq:WeylIntersection1}
	\Ad_K^*(\Pi_\mf s) = \mu_\mf k(X)\;.
\end{align}
As a second step, we prove that $\Pi_\mf s\subseteq i\mf c$: Theorem~\ref{thm:Decomposition} and Proposition~\ref{P: niceterm} imply that for all $\ell\in\{0,1,\ldots,r\}$ the functional $\lambda_\ell:=\lambda+\sum_{j=1}^\ell\gamma_j\in(i\mf t)^*$ is a highest weight of an $L$-type in $H^0(X,\linebundle)$, hence $i\lambda_\ell\in \Pi_\mf s\cap i\mf c$. Since $i\mf c$ is convex, we conclude that
\begin{align}\label{eq:WeylIntersection2}
	\Pi_\mf s = \textup{conv}\{i\lambda_0,\ldots,i\lambda_r\}\subseteq i\mf c\;.
\end{align}
Finally, since the closed Weyl chamber $i\mf c$ is a fundamental domain for the $K$-action on $\mf k^*$ (cf.\ \cite[Lemma~3.8.2]{DK00}), it follows from \eqref{eq:WeylIntersection1} and \eqref{eq:WeylIntersection2} that $\mu_\mf k(X)\cap i\mf c = \Pi_\mf s$.
\end{proof}

\subsection{Reduced spaces}
The next goal is to show that the reduced spaces are points, so we first determine the fibre $\tilde\mu_\mf k^{-1}(T)$ of a given element $T = i\,(\Id_V - \sum_{j = 1}^k\nu_j D_{e_j,\overline e_j})\in\mf k$.

\begin{lemma}\label{lem:OperatorEquality}
	Let $(e_1,\ldots,e_k)$ and $(c_1,\ldots, c_\ell)$ be two systems of orthogonal tripotents, and 
	let $\nu_1<\nu_2<\cdots <\nu_k$ and $\mu_1<\mu_2<\cdots<\mu_\ell$ be non-zero real numbers. Then
	\[
		\sum_{i=1}^k\nu_i D_{e_i,\overline e_i} = \sum_{j=1}^\ell\mu_j D_{c_j,\overline c_j}
		\ \iff\ 
		k =\ell\text{ and } \nu_i = \mu_i,\ e_i\approx c_i\text{ for all $i$.}
	\]
	Here, $e_i\approx c_i$ means that $e_i$ and $c_i$ induce the same Peirce decompositions, i.e., 
	$V_m(e_i) = V_m(c_i)$ for $m\in\{0,1,2\}$.
\end{lemma}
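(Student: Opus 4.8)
The plan is to prove the nontrivial implication $(\Rightarrow)$ by induction on $\dim V$, peeling off one extremal tripotent at a time, and to dispatch $(\Leftarrow)$ at once. For $(\Leftarrow)$: the operator $D_{e_i,\overline{e_i}}$ is diagonalizable with eigenvalue $m$ on the Peirce space $V_m(e_i)$, so the assumption $V_m(e_i)=V_m(c_i)$ for all $m$ forces $D_{e_i,\overline{e_i}}=D_{c_i,\overline{c_i}}$ term by term; together with $k=\ell$ and $\nu_i=\mu_i$ this gives equality of the two sums.

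For $(\Rightarrow)$ write $T:=\sum_{i}\nu_iD_{e_i,\overline{e_i}}$. Since $iD_{e_i,\overline{e_i}}\in\mf k$ is skew-adjoint, each $D_{e_i,\overline{e_i}}$ is self-adjoint for the inner product $(\cdot|\cdot)$, and because the $e_i$ are orthogonal these operators commute and are simultaneously diagonalized by the joint Peirce decomposition $V=\bigoplus_{0\le a\le b\le k}V_{ab}$; on $V_{ab}$ the operator $T$ acts by the scalar $\nu_a+\nu_b$ (with the convention $\nu_0:=0$). Thus $T$ is self-adjoint with real spectrum, and $V_2(e_k)=V_{kk}\neq 0$. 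If some $\nu_i$ is positive, i.e.\ $\nu_k>0$, then $\nu_a+\nu_b\le 2\nu_k$ with equality only for $a=b=k$, so the largest eigenvalue of $T$ is $2\nu_k$ and its eigenspace is exactly $V_2(e_k)$; symmetrically, if all $\nu_i<0$ the smallest eigenvalue is $2\nu_1$ with eigenspace $V_2(e_1)$. Since whether $T$ has a positive eigenvalue is intrinsic to $T$, the same alternative holds for the $c$-decomposition, and comparing the relevant extremal eigenvalue and eigenspace yields indices (say $m^*\in\{1,k\}$ and some $j^*$) with $\nu_{m^*}=\mu_{j^*}$ and $V_2(e_{m^*})=V_2(c_{j^*})$.

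The main obstacle is that the extremal eigenspace only exposes the Peirce $2$-space, whereas $\approx$ demands agreement of all three Peirce spaces; equivalently I must show that, for a tripotent, $V_2(e)$ determines the whole Peirce decomposition. Here I would exploit positivity of the involution: the eigenspaces of the self-adjoint $D_{e,\overline e}$ are mutually orthogonal, so $V_1(e)\oplus V_0(e)=V_2(e)^\perp$ is already fixed by $V_2(e)$, and it remains to characterize $V_0(e)$ intrinsically. The claim is
\[
	V_0(e)=\Set{x\in V}{\JTP{w}{\overline{w'}}{x}=0\text{ for all }w,w'\in V_2(e)}\,,
\]
which I would verify by completing $e$ to a frame $(f_1,\dots,f_r)$ with $e=f_1+\cdots+f_s$ and invoking the joint Peirce rules: all products $\JTP{V_{ab}}{V'_{cd}}{V_{pq}}$ with $a,b,c,d\le s$ and the $x$-indices outside $\{1,\dots,s\}$ vanish, while every nonzero component of $x$ in $V_1(e)$ is detected by some $\JTP{f_a}{\overline{f_a}}{x}\neq 0$. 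Because the right-hand side depends only on the subspace $V_2(e)$, the identity $V_2(e_{m^*})=V_2(c_{j^*})$ upgrades to $e_{m^*}\approx c_{j^*}$; in particular $V_0(e_{m^*})=V_0(c_{j^*})$.

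Finally I would restrict $T$ to the common sub-Jordan-pair $V^{(0)}:=V_0(e_{m^*})=V_0(c_{j^*})$, which is again semisimple with positive Hermitian involution. The tripotents $e_i$ with $i\neq m^*$ lie in $V^{(0)}$, are orthogonal there, and $D_{e_i,\overline{e_i}}|_{V^{(0)}}$ is the corresponding derivation of $V^{(0)}$, while $D_{e_{m^*},\overline{e_{m^*}}}$ vanishes on $V^{(0)}$; hence $T|_{V^{(0)}}=\sum_{i\neq m^*}\nu_iD_{e_i,\overline{e_i}}=\sum_{j\neq j^*}\mu_jD_{c_j,\overline{c_j}}$ are two admissible decompositions on a pair of strictly smaller dimension. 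The induction hypothesis gives $k=\ell$, matches the remaining coefficients in order so that $\nu_i=\mu_i$, and yields $V_2^{V^{(0)}}(e_i)=V_2^{V^{(0)}}(c_i)$. Since $e_i\perp e_{m^*}$ forces $V_2(e_i)=Q_{e_i}V'\subseteq V^{(0)}$, these subpair $2$-spaces coincide with the ambient $V_2(e_i)$, and a final application of the characterization above promotes them to $e_i\approx c_i$ in $V$. Together with $e_{m^*}\approx c_{j^*}$ this completes the induction; the base case ($T=0$, forcing $k=\ell=0$) is immediate.
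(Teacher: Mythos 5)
Your proof is correct, and its skeleton is the same as the paper's: diagonalize $T$ via the joint Peirce decomposition (eigenvalue $\nu_a+\nu_b$ on $V_{ab}$, $\nu_0:=0$), identify an extremal eigenvalue whose eigenspace is the Peirce $2$-space of one tripotent, invoke the fact that a tripotent's Peirce $2$-space determines its entire Peirce decomposition, and induct. The differences are worth recording. First, the paper simply declares the highest eigenvalue of $\sum_i\nu_iD_{e_i,\overline e_i}$ to be $2\nu_k$ with eigenspace $V_2(e_k)$; this is only true when $\nu_k>0$, while the lemma allows negative coefficients, in which case the top eigenvalue can instead be $0$ (on $V_{00}$) or $\nu_k$ (on $V_{0k}$). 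Your case split --- largest eigenvalue $2\nu_k$ if some coefficient is positive, smallest eigenvalue $2\nu_1$ if all are negative, the alternative being intrinsic to $T$ --- genuinely repairs this small gap (harmless for the paper's application, where the coefficients $\sigma_j^2/(1+\sigma_j^2)$ are positive). Second, the paper asserts without proof that Peirce $2$-spaces of tripotents determine the whole Peirce decomposition; you prove it, and your characterization $V_0(e)=\{x\in V \mid \{w,\overline{w'},x\}=0 \text{ for all } w,w'\in V_2(e)\}$ is sound: one inclusion follows from the Peirce rule $\{V_0,V_2',V\}=0$ and symmetry in the outer variables, and the reverse follows even more directly than your frame argument by taking $w=w'=e$. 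Third, the inductions differ: having concluded $D_{e_k,\overline e_k}=D_{c_\ell,\overline c_\ell}$, the paper subtracts these equal terms from both sides and inducts on the number of summands inside the fixed pair $(V,V')$, whereas you restrict to the common subpair $V_0(e_{m^*})=V_0(c_{j^*})$ and induct on $\dim V$. Your route requires the extra verifications you indeed supply --- that ambient and subpair Peirce $2$-spaces of the remaining tripotents agree (via $Q_{e_i}V'\subseteq V_0(e_{m^*})$), and that the statement is formulated for arbitrary, not necessarily simple, finite-dimensional pairs with positive Hermitian involution --- so the paper's subtraction is the leaner bookkeeping, while yours localizes the problem to a strictly smaller pair; both are valid.
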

\begin{proof}
For convenience, we set $A:=\sum_{i=1}^k\nu_i D_{e_i,\bar e_i}$ and $B:=\sum_{j=1}^\ell\mu_j D_{c_j,\bar c_j}$. Let $V = \bigoplus_{0\leq i\leq j\leq r} V_{ij}$ be the joint Peirce decomposition with respect to the orthogonal family $(e_1,\ldots,e_k)$. Then, the Peirce rules imply $Ax_{ij} = (\nu_i+\nu_j)\,x_{ij}$ for $x_{ij}\in V_{ij}$, where we also set $\nu_0:=0$. Therefore, $V$ decomposes into eigenspaces of $A$, and the eigenspace of the highest eigenvalue, namely $2\nu_k$, is $V_2(e_k)$. In the same way we obtain a decomposition of $V$ into eigenspaces of $B$, and the eigenspace of the highest eigenvalue $2\mu_\ell$ is $V_2(c_\ell)$. Assuming $A=B$, we therefore obtain $\nu_k = \mu_\ell$ and $V_2(e_k) = V_2(c_\ell)$. Since Peirce $2$-spaces corresponding to tripotents uniquely determine the whole Peirce decomposition, this also implies $V_m(e_k) = V_m(c_\ell)$ for $m\in\{0,1,2\}$, and hence $D_{e_k,\bar e_k} = D_{c_\ell,\bar c_\ell}$. Therefore, the assumption $A=B$ is reduced to $\sum_{i=1}^{k-1}\nu_i D_{e_i,\bar e_i} = \sum_{j=1}^{\ell-1}\mu_j D_{c_j,\bar c_j}$ and the statement follows by induction.
\end{proof}

\begin{thm}[Reduced spaces]\label{T: reducedspaces}
	Let $\tilde\mu_\mf k:X\to\mf k$ be the moment map of the $K$-action on $X$, and let
	$T\in\tilde\mu_\mf k(X)$, i.e., $T=\tilde\mu_\mf k(\GP{e_0+z}{\overline e_0})$ for some 
	tripotent $e_0\in\Tri$ and $z\in V_0(e_0)$. Let $z=\sum_{j=1}^k\sigma_je_j$ be the spectral 
	decomposition of $z$.
	\begin{enumerate}\renewcommand{\labelenumi}{\normalfont(\alph{enumi})}
		\item The fibre of $T$ with respect to the moment map is given by
					\begin{align*}
						\tilde\mu_\mf k^{-1}(T)
						= \Set{\GP{c_0 + \sum_{j=1}^k\sigma_j\,c_j}{\overline c_0}}
							{\begin{aligned}
								&c_j\in\Tri,\ c_j\approx e_j \\
								&\text{for all $j=0,\ldots, k$}
							 \end{aligned}}\;.
					\end{align*}
		\item The stabilizer subgroup $K_T$ of $T$ consists of those elements leaving the Peirce spaces
					of the joint Peirce decomposition $V=\bigoplus V_{ij}$ with 
					respect to the orthogonal system $(e_1,\ldots,e_k,e)$ invariant.
		\item The reduced space $\tilde\mu_\mf k^{-1}(T)/K_T$ is a point.
	\end{enumerate}
\end{thm}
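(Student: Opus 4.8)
The reduced space is a single point precisely when the stabilizer $K_T$ acts transitively on the fibre $\tilde\mu_\mf k^{-1}(T)$, so the entire task reduces to proving this transitivity, and the plan is to combine parts (a) and (b). By part (a) every point of the fibre is of the form $\GP{c_0+\sum_{j=1}^k\sigma_j c_j}{\overline c_0}$ with tripotents $c_j\approx e_j$, the base point corresponding to $c_j=e_j$. Using the unitarity relation $h^{-\#}\overline x=\overline{hx}$ for $h\in K$, the action on such a point reads
\[
	h\cdot\GP{c_0+\textstyle\sum_{j=1}^k\sigma_j c_j}{\overline c_0}
	= \GP{h c_0+\textstyle\sum_{j=1}^k\sigma_j\,h c_j}{\overline{h c_0}}\;,
\]
so that $K_T$-transitivity on the fibre is equivalent to the assertion that for any tuple $(c_0,\ldots,c_k)$ of tripotents with $c_j\approx e_j$ there exists $h\in K_T$ with $h e_j = c_j$ for all $j=0,\ldots,k$.

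First I would record that the condition $c_j\approx e_j$ forces each $c_j$ to be a maximal tripotent of the diagonal subpair $(V_{jj},\overline{V_{jj}})$ of the joint Peirce decomposition with respect to $(e_1,\ldots,e_k,e_0)$: indeed $c_j\in V_2(c_j)=V_2(e_j)=V_{jj}$ by orthogonality, and $V_m(c_j)=V_m(e_j)$ exhibits $c_j$ as a unit of the Jordan algebra $V_{jj}$. A convenient simplification is that I need not invoke part (b) to \emph{construct} $h$: any $h\in K$ with $h e_j = c_j$ for all $j$ lies automatically in $K_T$, because $V_m(h e_j)=V_m(c_j)=V_m(e_j)$ shows that $h$ preserves every Peirce space $V_m(e_j)$, hence every joint Peirce space $V_{ij}$, which is exactly the description of $K_T$ from part (b). Thus it suffices to produce \emph{some} $h\in K$ mapping $e_j$ to $c_j$ simultaneously.

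To build $h$ I would argue block by block. On each diagonal subpair $(V_{jj},\overline{V_{jj}})$ both $e_j$ and $c_j$ are maximal tripotents, and the connected group of unitary automorphisms of a semisimple subpair acts transitively on its maximal tripotents, which yields a unitary automorphism $g_j$ of $(V_{jj},\overline{V_{jj}})$ with $g_j e_j = c_j$. The main obstacle is the \emph{extension step}: each $g_j$ must be promoted to a global unitary automorphism $\hat g_j\in K$ that preserves the full joint Peirce decomposition, acts as $g_j$ on $V_{jj}$, and as the identity on the other diagonal blocks. This is the only genuinely Jordan-theoretic input, and it rests on the fact that an automorphism of the Peirce-$2$ subpair attached to a frame extends canonically to the whole pair through the triple-product module structure on the off-diagonal spaces and on $V_1$, trivially on $V_0$, the extension being unitary whenever $g_j$ is (using $\overline{V_{ij}}=V'_{ij}$ for systems of tripotents), cf.\ the Peirce theory in \cite{Lo75}. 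Granting this, the product $h:=\hat g_0\hat g_1\cdots\hat g_k$ lies in $K_T$ and satisfies $h e_j = c_j$ for every $j$, so $K_T$ acts transitively on $\tilde\mu_\mf k^{-1}(T)$ and the reduced space is a single point.
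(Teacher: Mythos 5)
Your reduction of (c) to producing, for each admissible tuple $(c_0,\ldots,c_k)$, some $h\in K$ with $he_j=c_j$ for all $j$, and your observation that any such $h$ lies automatically in $K_T$ (since $hV_m(e_j)=V_m(hc_j)=V_m(c_j)=V_m(e_j)$), are correct and coincide with the paper's own reduction. The gap is exactly at what you call the ``main obstacle'', the extension step. The asserted ``fact'' that a unitary automorphism $g_j$ of the diagonal subpair $(V_{jj},\overline{V_{jj}})$ extends \emph{canonically} to the whole pair ``through the triple-product module structure'' is not a fact, and there is no such theorem in \cite{Lo75}. Concretely, let $V=\mathrm{Sym}_2(\CC)$ with frame $e_1=E_{11}$, $e_2=E_{22}$, so $V_{11}=\CC E_{11}$, $V_{22}=\CC E_{22}$, $V_{12}=\CC(E_{12}+E_{21})$, and $K$ acting by $x\mapsto uxu^t$, $u\in U(2)$. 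The unitary automorphism $g_1\colon E_{11}\mapsto\lambda E_{11}$, $|\lambda|=1$, of the subpair has exactly two extensions in $K$ that fix $E_{22}$ and preserve the joint Peirce spaces, namely $u=\mathrm{diag}(\pm\sqrt{\lambda},1)$, and they act by $\pm\sqrt{\lambda}$ on $V_{12}$: the action on the off-diagonal block is \emph{not} determined by $g_1$ and the module structure, and no continuous (let alone canonical or homomorphic) assignment $g_1\mapsto\hat g_1$ can exist, since it would yield a continuous section of the squaring map $U(1)\to U(1)$. So canonicity is false, and what your argument actually needs --- the mere existence of simultaneous extensions in $K$ for all blocks --- is an unproven claim whose difficulty is essentially that of the transitivity statement itself. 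Note also that your block transitivity input is itself a Loos-type theorem (transitivity on frames, now invoked for each subpair), so the block-by-block route uses the same deep input as the paper \emph{and} an extra unjustified step.

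The paper avoids the extension problem entirely by applying the transitivity input once, globally: by \cite[\S5.9]{Lo77}, $K$ acts transitively on frames of tripotents of $(V,V')$. One decomposes each $e_j$ and each $c_j$ into primitive orthogonal tripotents, completes both systems to frames, and obtains $k\in K$ carrying one frame to the other, hence $kc_j=e_j$ for all $j$; your (and the paper's) automatic-membership observation then gives $k\in K_T$. If you wish to salvage a block-by-block construction, the automorphisms that do extend canonically are the inner ones: for $x\in V_{jj}$, $v\in V'_{jj}$ quasi-invertible, the Bergman operator $\B{x}{v}$ computed in $V$ restricts to the Bergman operator of the subpair, acts as the identity on all Peirce spaces whose indices avoid $j$, and preserves the joint Peirce decomposition. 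But you would then still have to prove that unitary automorphisms of this inner type act transitively on the invertible tripotents of $V_{jj}$ equivalent to $e_j$, which requires precisely the structure theory you were trying to bypass.
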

\begin{proof}
Recall from Proposition~\ref{prop:Representatives} that any element of $X$ can be represented as $\GP{c_0+w}{\overline c_0}$ for some tripotent $c_0$ and $w\in V_0(c_0)$. Let $w = \sum_{i=1}^\ell\tau_ic_i$ be the spectral decomposition of $w$. Now assume that $\tilde\mu_\mf k(\GP{e_0+z}{\overline e_0}) = \tilde\mu_\mf k(\GP{c_0+w}{\overline c_0})$. Then, Theorem~\ref{thm:ExplicitMomentMap} and Lemma~\ref{lem:OperatorEquality} imply that $k=\ell$ and $\sigma_j^2/(1+\sigma_j^2) = \tau_j^2/(1+\tau_j^2)$, $c_j\approx e_j$ for all $j=0,\ldots, k$. Moreover, since the identity between $\sigma_j$ and $\tau_j$ is solved only for $\sigma_j=\pm\tau_j$, and $\sigma_j$, $\tau_j$ are assumed to be positive, this proves (a). For (b), we note that $\Ad_k D_{e_i,\overline e_i} = D_{ke_j,\overline{ke}_j}$, and hence Lemma~\ref{lem:OperatorEquality} implies that $k\in K$ stabilizes $T$ if and only if $e_j\approx k e_j$ for all $j$. Since the Peirce spaces of the joint Peirce decomposition corresponding to $(e_0,e_1,\ldots,e_k)$ can be described by intersections of the Peirce spaces $V_m(e_j)$ with $m\in\{0,1,2\}$, $j=0,\ldots,k$, and, conversely, the Peirce spaces $V_m(e_j)$ are given by direct sums of joint Peirce spaces. This proves (b). For (c), we have to show that $K_T$ acts transitively on $\tilde\mu_\mf k^{-1}(T)$. Due to (a), this is equivalent to the statement that $K_T$ conjugates any orthogonal systems $(e_0,\ldots,e_k)$ and $(c_0,\ldots,c_k)$ of tripotents with $e_j\approx c_j$ for all $j$. This follows from the fact \cite[\S5.9]{Lo77} that $K$ acts transitively on the set of frames of tripotents: since each $e_j$ and $c_j$ can be decomposed further into orthogonal primitive tripotents to obtain frames, and since $\rank e_j = \rank c_j$, there exists an element $k\in K$ mapping $c_j$ onto $e_j$ for all $j$. Since $c_j\approx e_j$, $k$ preserves the Peirce spaces $V_m(e_j) = V_m(c_j)$. By the same argument as for (b), it follows that $k\in K_T$.
\end{proof}

\section{Branching laws}
Before we turn to the problem of decomposing the 
spaces $H^0(X,\mathscr{L}^k)$ under the group $K$ 
we recall some notions from the theory of Hamiltonian 
actions of compact Lie groups. For a more thorough 
treatment, we refer to \cite{GS82} and \cite{S95}.

Let $K$ temporarily denote an arbitrary compact 
connected Lie group which acts 
holomorphically on the connected compact 
K\"ahler manifold $(M, \Omega)$ in a Hamiltonian fashion, 
and let $\tau: M \rightarrow \fk^*$ be 
the moment map for the action. 
Assume that $\mathcal{L} \rightarrow M$ is a 
prequantum line bundle for $(M, \Omega)$. Then 
$K$ acts holomorphically on $\mathcal{L}$ as bundle 
isomorphisms, and this action extends to an action 
of the universal complexification, $K^{\C}$, of 
$K$. We recall two notions of stability for the 
$K$-action on $M$. 
Firstly, we have the set 
\begin{equation*}
M_{ss}(\mathcal{L}):=\{m \in M \mid s(m) \neq 0 \,\,\mbox{for some $k \in \N$, and 
$s \in H^0(M, \mathcal{L}^k)^K$}\}
\end{equation*}
of \emph{algebraically semistable} points. 
Secondly, we have the set
$$M_{ss}=\{m \in M \mid \overline{K^\C.m} \cap 
\tau^{-1}(0) \neq \emptyset \}$$
of \emph{analytically semistable} points. 
In fact, the identity
\begin{equation*}
M_{ss}(\mathcal{L})=M_{ss} \label{E: stabeq}
\end{equation*} 
holds (cf. \cite[Thm. 2.18]{S95}).

We now turn to our particular setting, where 
$K$ again is the stabilizer in $U$ of $eQ\in X$.  
Let $\mathcal{O}_\xi^K$ be a coadjoint orbit 
in $\fk^*$ through an integral $i\xi \in \ft^* \subseteq \fk^*$, 
which we view as a symplectic manifold when equipped 
with the Kostant-Kirillov symplectic form $\omega^K_\xi$. 
Let $\overline{\mathcal{O}^K_\xi}$ denote 
$\mathcal{O}^K_\xi$ equipped with the symplectic form 
$-\omega^K_\xi$, and with the reverse 
complex structure, i.e., the sheaf of 
holomorphic functions on $\overline{\mathcal{O}^K_\xi}$
is the sheaf of antiholomorphic functions on 
$\mathcal{O}^K_\xi$. If $\mathscr{L}_\xi$ is 
the prequantum line bundle for $\mathcal{O}^K_\xi$, let
$\overline{\mathscr{L}_\xi^*}$ denote the 
line bundle over $\mathcal{O}^K_\xi$ where 
the fibre over $x$ is given by the 
antilinear functionals $(\mathscr{L}_\xi)_x \rightarrow \C$.
Then $\overline{\mathscr{L}_\xi^*}$ is a holomorphic 
prequantum line bundle for $(\overline{\mathcal{O}^K_\xi},-\omega_\xi)$.

Consider now the product space 
$X \times \overline{\mathcal{O}^K_\xi}$. 
Let $p_1$ and $p_2$ denote the projections onto 
$X$ and $\overline{\mathcal{O}^K_\xi}$, respectively. 
For $k \in \N$, we equip $X \times \overline{\mathcal{O}^K_\xi}$
with the symplectic form $p_1^*(k\omega_\lambda)-p_2^*\omega^K_\xi$.
The diagonal action of $K$ on 
$X \times \overline{\mathcal{O}^K_\xi}$, when equipped 
with this symplectic form, is then holomorphic and 
Hamiltonian with moment map 

\begin{equation*}
\mu_\fk^{k,\xi}(x,y):=\mu_\fk^k(x)-y, \quad (x,y) \in 
X \times \overline{\mathcal{O}^K_\xi}.
\end{equation*}

Put 
\begin{equation*}
\mathcal{L}(\xi, k):=
p_1^*\mathscr{L}^k \otimes p_2^*\overline{\mathscr{L}_\xi^*}.
\end{equation*}
Then $\mathcal{L}(\xi, k)$ is a holomorphic 
prequantum line bundle for 
$((X \times \overline{\mathcal{O}^K_\xi})$,
$p_1^*(k\omega_\lambda)-p_2^*\omega^K_\xi)$, 
and the $K$-action lifts to a holomorphic action 
on $\mathcal{L}(\xi, k)$.

From Theorem~\ref{T: reducedspaces} (c) we immediately conclude 
the following result.

\begin{prop} \label{P: transzerofibre}
For every $k \in \N$, the group $K$ acts transitively on 
$(\mu_{\fk}^{k,\xi})^{-1}(0) \subseteq 
X \times \overline{\mathcal{O}^K_\xi}$. 
\end{prop}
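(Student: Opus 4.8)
The plan is to identify the zero fibre with a graph over $X$ and then transfer the transitivity question to $X$ itself, where Theorem~\ref{T: reducedspaces}(c) applies directly. First I would note that a pair $(x,y)\in X\times\overline{\mathcal{O}^K_\xi}$ lies in $(\mu_\fk^{k,\xi})^{-1}(0)$ if and only if $y=\mu_\fk^k(x)$ (which forces $\mu_\fk^k(x)\in\mathcal{O}^K_\xi$), so that the zero fibre is the graph
\[
(\mu_\fk^{k,\xi})^{-1}(0)=\{(x,\mu_\fk^k(x))\mid x\in X,\ \mu_\fk^k(x)\in\mathcal{O}^K_\xi\}.
\]
Consequently the first projection $p_1$ restricts to a bijection of this set onto $(\mu_\fk^k)^{-1}(\mathcal{O}^K_\xi)$, with inverse $x\mapsto(x,\mu_\fk^k(x))$. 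Using the $K$-equivariance of the moment map, $\Ad_g^*\mu_\fk^k(x)=\mu_\fk^k(g\cdot x)$, the diagonal action sends $(x,\mu_\fk^k(x))$ to $(g\cdot x,\mu_\fk^k(g\cdot x))$, so $p_1$ is $K$-equivariant. Hence transitivity of $K$ on the zero fibre is equivalent to transitivity of $K$ on $(\mu_\fk^k)^{-1}(\mathcal{O}^K_\xi)$.

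Next I would reduce to a single fibre. Given $x,x'$ with $\mu_\fk^k(x),\mu_\fk^k(x')\in\mathcal{O}^K_\xi$, the two values lie in one and the same coadjoint orbit, so there is $h\in K$ with $\Ad_h^*\mu_\fk^k(x)=\mu_\fk^k(x')$; replacing $x$ by $h\cdot x$ and invoking equivariance again, I may assume $\mu_\fk^k(x)=\mu_\fk^k(x')=:\nu$. It then remains to produce $g$ in the stabilizer $K_\nu$ with $g\cdot x=x'$, i.e.\ to show that $K_\nu$ acts transitively on $(\mu_\fk^k)^{-1}(\nu)$. Here I would invoke Theorem~\ref{T: reducedspaces}(c): since $\mu_\fk^k=k\mu_\fk$ and $\vartheta$ is a $K$-equivariant linear isomorphism identifying $\mu_\fk$ with $\tilde\mu_\fk$ up to the scalar $k$, the fibre $(\mu_\fk^k)^{-1}(\nu)$ coincides as a $K$-set with a fibre $\tilde\mu_\fk^{-1}(T)$, and the stabilizers satisfy $K_\nu=K_T$ (neither the scaling by $k$ nor the equivariant isomorphism $\vartheta$ affects the stabilizer). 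Theorem~\ref{T: reducedspaces}(c) asserts precisely that $K_T$ acts transitively on $\tilde\mu_\fk^{-1}(T)$, which yields the required $g$.

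The argument is essentially formal bookkeeping, and I do not expect a genuine obstacle: the substance of the statement is already contained in the fact that the reduced spaces are points. The only points demanding care are verifying that $p_1$ is $K$-equivariant through moment-map equivariance, and tracking the rescaling $\mu_\fk^k=k\mu_\fk$ together with the identification $\vartheta\colon\fk^*\cong\fk$, so that Theorem~\ref{T: reducedspaces}(c), stated for $\tilde\mu_\fk$ with target $\fk$, transfers verbatim to $\mu_\fk^k$ with target $\fk^*$.
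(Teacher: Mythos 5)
Your proposal is correct and takes essentially the same route as the paper: the paper's entire proof is the assertion that the proposition follows immediately from Theorem~\ref{T: reducedspaces}(c), and your argument simply makes that deduction explicit. The bookkeeping you supply (the graph identification via $p_1$, the reduction to a single fibre using transitivity of $K$ on the coadjoint orbit, and the transfer through the scaling $\mu_\fk^k = k\mu_\fk$ and the $K$-equivariant identification $\vartheta$) is exactly what the authors leave to the reader, and each step checks out.
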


\begin{prop} \label{P: multfree}
For every $\xi \in (i\ft)^*$, and $k \in \N$, the dimension of the space
$H^0(X \times \overline{\mathcal{O}^K_\xi}, \mathcal{L}(\xi, k))^K$ 
is at most one.
\end{prop}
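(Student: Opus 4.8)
The plan is to reduce the statement to Proposition~\ref{P: transzerofibre} together with the GIT machinery recalled in the introduction. Write $M := X \times \overline{\mathcal{O}^K_\xi}$. First I would pass from $K$-invariant to $K^\C$-invariant sections: for a holomorphic section $s$ of $\mathcal{L}(\xi,k)$ the assignment $g \mapsto g\cdot s - s$ is holomorphic on $K^\C$ and vanishes on the totally real subgroup $K$, hence vanishes identically, so that $H^0(M,\mathcal{L}(\xi,k))^K = H^0(M,\mathcal{L}(\xi,k))^{K^\C}$. More generally, the graded ring $R := \bigoplus_{q \ge 0} H^0(M, \mathcal{L}(\xi,k)^{q})^K$ is the ring of $K^\C$-invariants. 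Since $X$ and $\mathcal{O}^K_\xi$ are irreducible projective varieties, so is their product $M$, and therefore the full section ring $\bigoplus_q H^0(M, \mathcal{L}(\xi,k)^{q})$ is an integral domain: a product of two nonzero sections is nonzero because the union of their zero loci is a proper analytic subset of the irreducible $M$. Consequently its subring $R$ is an integral domain as well.

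Next I would invoke the GIT/symplectic correspondence recalled before Theorem~\ref{T: sjamaar}. By that machinery, $\operatorname{Proj}(R)$ is isomorphic to the Mumford quotient $M_{ss}//K^\C$, which in turn is homeomorphic to the symplectic reduction $(\mu_\fk^{k,\xi})^{-1}(0)/K$. Here finite generation of $R$, needed for $\operatorname{Proj}(R)$ to be a genuine projective variety, is part of the GIT package for the reductive group $K^\C$. Now Proposition~\ref{P: transzerofibre} asserts that $K$ acts transitively on $(\mu_\fk^{k,\xi})^{-1}(0)$; hence this reduction, and therefore $\operatorname{Proj}(R)$, is either a single point (when the fibre is nonempty) or empty (when it is not).

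To conclude I would treat the two cases. If the fibre is empty then $M_{ss} = \emptyset$, so by the identity $M_{ss}(\mathcal{L}(\xi,k)) = M_{ss}$ every invariant section of every power vanishes identically, and in particular $H^0(M,\mathcal{L}(\xi,k))^K = 0$. If instead $\operatorname{Proj}(R)$ is a single point, I would use that $R$ is a graded domain: the function field of the projective variety $\operatorname{Proj}(R)$ equals the degree-zero part $\operatorname{Frac}(R)_0$ of the fraction field, and a single point has function field $\C$, so $\operatorname{Frac}(R)_0 = \C$. Given any two nonzero invariant sections $s, t \in R_1 = H^0(M,\mathcal{L}(\xi,k))^K$, the ratio $s/t$ lies in $\operatorname{Frac}(R)_0 = \C$, so $s$ and $t$ are proportional; thus $\dim_\C H^0(M,\mathcal{L}(\xi,k))^K \le 1$.

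The substantive input is entirely Proposition~\ref{P: transzerofibre} and the homeomorphism between the Mumford quotient and the symplectic reduction; granting these, the only step requiring genuine care is the final passage from ``$\operatorname{Proj}(R)$ is a point'' to ``$\dim R_1 \le 1$''. This does \emph{not} follow from the topological space being a point alone, since a graded ring can have a two-dimensional degree-one part with $\operatorname{Proj}$ a point; the argument crucially uses that $R$ is a \emph{domain}, so that ratios of degree-one elements are honest rational functions on $\operatorname{Proj}(R)$ and hence constant. I therefore expect establishing integrality of $R$ (equivalently, irreducibility of $M$ together with the domain property of the section ring) to be the point most worth stating carefully, even though it is not hard.
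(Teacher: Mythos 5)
Your proof is correct, and it rests on the same two pillars as the paper's --- Proposition~\ref{P: transzerofibre} and Sjamaar's semistability results from \cite{S95} --- but it converts them into the dimension bound by a genuinely different mechanism. The paper argues directly with sections: by \cite[Thm.~2.18]{S95} one may restrict invariant sections to the semistable locus; there, a $K$-invariant (hence $K^{\C}$-invariant) section is determined by its values on $(\mu_{\fk}^{k,\xi})^{-1}(0)$ by the very definition of analytic semistability; by Proposition~\ref{P: transzerofibre} and $K$-equivariance it is then determined by its value at a single point, which lies in the one-dimensional fibre of $\mathcal{L}(\xi,k)$ over that point, so the space of invariants has dimension at most one. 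You instead pass to the full graded ring $R$ of invariant sections of all powers, identify $\operatorname{Proj}(R)$ with the Mumford quotient and hence, via the homeomorphism with the symplectic reduction, with a point or the empty set, and then use integrality of $R$ and the function-field argument to force $\dim R_1\le 1$; your observation that integrality is what makes this last step legitimate is exactly right (a non-reduced example such as $R=\C[x,y]/(y^2)$ has $\operatorname{Proj}(R)$ a single point but $\dim R_1=2$). What your route buys: it is watertight precisely where the paper is terse, since the claim that invariant sections are ``determined by their values on the zero fibre'' itself needs an argument (a section may vanish at a boundary point of an orbit without vanishing on the orbit), and in your version this difficulty is absorbed into the standard identification of the GIT quotient with $\operatorname{Proj}(R)$; you also treat the empty-fibre case explicitly. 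What it costs: you need $\mathcal{L}(\xi,k)$ to be ample for ``the GIT package'' (finite generation of $R$ and $\operatorname{Proj}(R)\cong (X\times\overline{\mathcal{O}^K_\xi})_{ss}//K^{\C}$) to apply as stated; this does hold --- $\mathcal{L}(\xi,k)$ is the external tensor product of positive prequantum, hence ample, bundles on the two compact factors --- but it should be said rather than assumed, and one also wants the small scheme-theoretic remark that an integral projective scheme over $\C$ whose underlying space is a point is $\operatorname{Spec}\C$, so that its function field is indeed $\C$.
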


\begin{proof}
First of all, by \cite[Thm. 2.18]{S95},
\begin{equation*}
H^0(X \times \overline{\mathcal{O}^K_\xi}, 
\mathcal{L}(\xi, k))^K \cong 
H^0((X \times \overline{\mathcal{O}^K_\xi})_{ss},
\mathcal{L}(\xi, k))^K.
\end{equation*}
By the definition of $(X \times \overline{\mathcal{O}^K_\xi})_{ss}$ 
any $K$-invariant section, being also $K^\CC$-invariant, is 
uniquely determined by its values on $(\mu_\fk^{k,\xi})^{-1}(0)$.
By Proposition \ref{P: transzerofibre}, such a section is in fact 
determined by its value at some given point in 
$(\mu_\fk^{k,\xi})^{-1}(0)$. 
This finishes the proof.
\end{proof}

\begin{lemma}\label{L: integral}
The integral points in 
$\mu_{\fk}^k(X)\cap i\mathfrak{c}$
are precisely the points
$k\lambda+m_1\gamma_1+\cdots+m_r\gamma_r$, where
$k \geq m_1 \geq \ldots \geq m_r \geq 0$. 
\end{lemma}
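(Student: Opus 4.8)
The plan is to deduce the statement for arbitrary $k$ from the already-computed moment polytope for $\mu_\fk$ by a scaling argument, and then to read off the lattice points. Since $\mu_\fk^k=k\mu_\fk$ and the closed chamber $i\mf c$ is a cone, hence stable under multiplication by the positive scalar $k$, I would first write
\[
	\mu_\fk^k(X)\cap i\mf c=\big(k\,\mu_\fk(X)\big)\cap k(i\mf c)=k\big(\mu_\fk(X)\cap i\mf c\big)=k\,\Pi_\mf s.
\]
Feeding in the description $\Pi_\mf s=\{\,i(\lambda+\sum_{j=1}^r\nu_j\gamma_j)\mid 1\ge\nu_1\ge\cdots\ge\nu_r\ge0\}$ from the theorem on the moment polytope, and substituting $t_j:=k\nu_j$, this becomes
\[
	\mu_\fk^k(X)\cap i\mf c=\Set{\,i\big(k\lambda+\sum_{j=1}^r t_j\gamma_j\big)}{k\ge t_1\ge\cdots\ge t_r\ge0}.
\]
Thus everything reduces to identifying the integral points of this ordered simplex.

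For the inclusion of the listed points I would observe that the chain $k\ge m_1\ge\cdots\ge m_r\ge0$ is exactly the defining condition of $k\Pi_\mf s$, while integrality is automatic: $k\lambda$ is integral, and each $\gamma_j=\lambda_j-\lambda_{j-1}$ is a root, hence an integral weight, the $\lambda_\ell$ being genuine $L$-type highest weights by Proposition~\ref{P: niceterm}. For the converse I would recover the coefficients of a given point by pairing with the coroots $H_{\gamma_j}$: strong orthogonality gives $\gamma_i(H_{\gamma_j})=2\delta_{ij}$, and $\lambda(H_{\gamma_j})=-1$ (as in Lemma~\ref{lem:DualFundamWeight}, using the $K$-conjugacy of frames), so a point $i(k\lambda+\sum_j t_j\gamma_j)$ pairs with $H_{\gamma_j}$ to give $2t_j-k$. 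Hence the $t_j$ are intrinsically determined by the weight, and the task is to show that integrality forces $t_j\in\Z$.

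The step I expect to be the crux is precisely this last one, namely pinning down the correct integrality lattice inside the affine span $k\lambda+\sum_j\R\gamma_j$ of the polytope. When the $\gamma_j$ generate a primitive sublattice of the root lattice (the non-tube cases), the pairing above already forces $t_j\in\Z$ and the claim is immediate. In the tube-type cases, however — for instance $\mf g=\mathfrak{sp}$, where $r$ equals the rank and the $\gamma_j$ are the long roots $2e_j$ — the lattice $\bigoplus_j\Z\gamma_j$ is a \emph{proper} sublattice of the full weight lattice restricted to that span; the pairing then only yields $t_j\in\tfrac12\Z$, and there genuinely exist weight-lattice points of the polytope with half-integer coefficients (the origin when $k=1$ being the simplest) that are not in the list and do not occur as $L$-type highest weights. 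To exclude these I would use that all weights of the irreducible $G$-module $H^0(X,\mathscr{L}^k)$ lie in the coset $k\lambda+\langle\alpha_1,\ldots,\alpha_n\rangle_\Z$, and that those arising as $L$-type highest weights are cut out by the congruence $2t_j-k\equiv k\pmod 2$, i.e.\ membership in $k\lambda+\bigoplus_j\Z\gamma_j$. Once the integrality lattice is thus identified, the coordinate description of the ordered simplex produces exactly the points $k\lambda+m_1\gamma_1+\cdots+m_r\gamma_r$ with $k\ge m_1\ge\cdots\ge m_r\ge0$, which completes the proof.
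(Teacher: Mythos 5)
You have the right reduction and you have correctly located the crux, but the crux itself is never resolved: your proof ends by asserting the very statement it needs. The scaling identity $\mu_\fk^k(X)\cap i\mathfrak{c}=k\big(\mu_\fk(X)\cap i\mathfrak{c}\big)=k\Pi_{\mf s}$, the easy inclusion, and the pairing $\xi(H_{\gamma_j})=2t_j-k$ all agree with the paper (where the scaling is implicit), and your observation that integrality only forces $t_j\in\tfrac{1}{2}\Z$, with genuine half-integral weight-lattice points present in the tube cases, is exactly the difficulty. Your exclusion of these points, however, rests on two claims. The claim that weights ``arising as $L$-type highest weights'' satisfy $t_j\in\Z$, i.e.\ lie in $k\lambda+\bigoplus_j\Z\gamma_j$, is precisely what has to be proven; invoking it is circular (and it silently converts the lemma from a statement about integral points of the polytope into one about highest weights of occurring $L$-types). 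The other claim, that all weights of $H^0(X,\linebundle^k)$ lie in $k\lambda+\langle\alpha_1,\ldots,\alpha_n\rangle_\Z$, is true but demonstrably insufficient: in your own example $\g=\mathfrak{sp}_4(\C)$, $k=1$, with $\lambda=-(\varepsilon_1+\varepsilon_2)$ and $\{\gamma_1,\gamma_2\}=\{2\varepsilon_1,2\varepsilon_2\}$ in standard coordinates, the origin is $0=\lambda+\tfrac12\gamma_1+\tfrac12\gamma_2=\lambda+(\varepsilon_1+\varepsilon_2)$, and $\varepsilon_1+\varepsilon_2$ is itself one of the roots $\alpha_j$. Thus the origin lies in $\lambda+\sum_j\N_0\alpha_j$, is an honest weight of the five-dimensional module $H^0(X,\linebundle)$, is dominant, and lies in $\Pi_{\mf s}$ (it is the midpoint of $i\lambda$ and $i(\lambda+\gamma_1+\gamma_2)$); it is excluded from the list only because it is not the highest weight of any $K$-subrepresentation (those are $\pm(\varepsilon_1+\varepsilon_2)$ and $\varepsilon_1-\varepsilon_2$). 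No lattice-membership condition of the kind you invoke can separate it from the listed points.

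The missing ingredient is an argument about highest weight \emph{vectors} rather than lattices, and it is what the paper appeals to at exactly this step by citing the end of the proof of Lemma 2 of Schmid \cite{Sch}: an $N_L^+$-invariant weight vector of $H^0(X,\linebundle^k)$ is, via the trivialization \eqref{E: triv}, a polynomial, and (in essence) its restriction to the $\C$-span of the frame $e_1,\ldots,e_r$ is a weight vector for $\mathfrak{s}'$; distinct monomials in these $r$ variables have distinct $\mathfrak{s}'$-weights, so that restriction is a single monomial, whose exponent vector is $(t_1,\ldots,t_r)$ --- integrality of the $t_j$ is then integrality of exponents of a polynomial. (Proposition~\ref{P: monomweight} is the analogue in the paper's own coordinates, but by itself it is also insufficient, since it allows arbitrary combinations of the $\alpha_j$, and $\varepsilon_1+\varepsilon_2$ above is such an $\alpha_j$.) Two smaller corrections: in the non-tube cases it is not the pairing that gives $t_j\in\Z$ --- pairing never gives more than $2t_j\in\Z$ --- but primitivity of $\bigoplus_j\Z\gamma_j$ inside the weight lattice intersected with its real span, a separate verification you would still owe; and the easy direction needs no appeal to Proposition~\ref{P: niceterm}, since the $\gamma_j$ are roots of $\g$ by construction. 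In fairness, your analysis does expose that ``integral points'' in the lemma cannot mean weight-lattice points (the origin above would then contradict the lemma itself); the paper's terse proof stands only because Schmid's cited argument concerns weights realized by highest weight vectors, which is also the only way the lemma is used in Theorem~\ref{T: main}.
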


\begin{proof}
Clearly, every point $k\lambda+m_1\gamma_1+\cdots+m_r\gamma_r$ with 
$k \geq m_1 \geq \ldots \geq m_r \geq 0$ is integral. 
For the converse inclusion, let 
$\xi \in \mu_{\fk}^k(X)\cap i\mathfrak{c}$.
Then $\xi=k\lambda+x_1\gamma_1+\cdots+x_r\gamma_r$ 
for some $x_i \in \R$ satisfying
$k \geq x_1 \geq \ldots \geq x_r \geq 0$. 
If $\xi$ is integral, then $x_1\gamma_1+\cdots+x_r\gamma_r$
is also integral. From the argument at the end of 
the proof of Lemma 2 in \cite{Sch} it 
then follows that $x_i \in \Z$ for $i=1,\ldots, r$. 
This finishes the proof.
\end{proof}

\begin{thm}\label{T: main}
The space $H^0(X, \mathscr{L}^k)$ decomposes under
$L$ as
\begin{equation*}
H^0(X, \mathscr{L})=\bigoplus_{k \geq m_1 \geq \ldots \geq m_r \geq 0}
W_{(k\lambda, \boldsymbol{m})},
\end{equation*}
where $W_{(k\lambda, \boldsymbol{m})}$ is the irreducible 
$L$-representation with highest weight 
$k\lambda+\sum_{i=1}^r m_i\gamma_i$. 
\end{thm}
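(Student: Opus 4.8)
The plan is to pin down the full decomposition from three ingredients: Sjamaar's vanishing theorem to bound which highest weights can occur, Proposition~\ref{P: multfree} to bound their multiplicities, and an existence argument that reduces the general $k$ to the case $k=1$ already settled in Theorem~\ref{thm:Decomposition} and Proposition~\ref{P: niceterm}.

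First I would record the multiplicity bound together with the support. Since decomposing under $L$ is equivalent to decomposing under the maximal compact subgroup $K$, write $H^0(X,\mathscr{L}^k)=\bigoplus_\xi m_\xi\,W_\xi$ over dominant integral $\xi\in i\mathfrak{c}$. By the standard multiplicity formula recalled in the introduction, $m_\xi=\dim H^0(X\times\overline{\mathcal{O}^K_\xi},\mathcal{L}(\xi,k))^K$, which Proposition~\ref{P: multfree} bounds by $1$. To bound the support I would apply Theorem~\ref{T: sjamaar} with $M=X\cong\mathcal{O}_\lambda$, symplectic form $k\omega_\lambda$, prequantum bundle $\mathscr{L}^k$ and moment map $\mu_\fk^k$: if $m_\xi\neq0$ then $\xi\in\mu_\fk^k(X)$, so $\xi\in\mu_\fk^k(X)\cap i\mathfrak{c}$, and by integrality Lemma~\ref{L: integral} forces $\xi=k\lambda+\sum_{i=1}^r m_i\gamma_i$ with $k\ge m_1\ge\cdots\ge m_r\ge0$. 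Thus every occurring highest weight has the asserted form and multiplicity at most one.

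It remains to show that each such weight $\mu=k\lambda+\sum_{i=1}^r m_i\gamma_i$ is actually attained, and this is where the reduction to $k=1$ enters. Put $n_0:=k-m_1$, $n_i:=m_i-m_{i+1}$ for $1\le i\le r-1$, and $n_r:=m_r$; the inequalities make the $n_j$ nonnegative integers with $\sum_j n_j=k$, and a telescoping sum gives $\mu=\sum_{j=0}^r n_j\lambda_j$, where $\lambda_j=\lambda+\gamma_1+\cdots+\gamma_j$ are the highest weights of the $k=1$ summands. For each $j$ pick a highest weight vector $v_j\in H^0(X,\mathscr{L})$ of $W_{\lambda_j}$ (in particular $v_0$ is the nonzero constant section), and form the product $s:=v_0^{n_0}\cdots v_r^{n_r}$ inside the section ring $R:=\bigoplus_{\ell\ge0}H^0(X,\mathscr{L}^\ell)$, which lands in $H^0(X,\mathscr{L}^k)$ because $\sum_j n_j=k$. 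Since $\fn_L^+$ acts on $R$ by derivations (the group acting by algebra automorphisms), $s$ is again $\fn_L^+$-invariant, of weight $\sum_j n_j\lambda_j=\mu$. Because $X$ is irreducible and $\mathscr{L}$ ample, $R$ is an integral domain (this is also what underlies the additivity $v(st)=v(s)+v(t)$ of \eqref{E: valdef}), so the product of the nonzero sections $v_j$ is nonzero. Hence $s$ is a nonzero highest weight vector of weight $\mu$, giving $W_\mu\hookrightarrow H^0(X,\mathscr{L}^k)$ and $m_\mu\ge1$.

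Combining the three steps yields $m_\mu=1$ exactly for $\mu=k\lambda+\sum_{i=1}^r m_i\gamma_i$ with $k\ge m_1\ge\cdots\ge m_r\ge0$, and $m_\mu=0$ otherwise, which is the claimed decomposition. The one delicate point is the nonvanishing of $s$: everything hinges on the distinguished sections $v_j$ (equivalently the explicit sections $f_j$ of Proposition~\ref{P: niceterm}, whose frame-restrictions are $(1+x_1)\cdots(1+x_j)$) never cancelling upon multiplication. Invoking the domain property of $R$ makes this immediate; a more hands-on alternative would verify directly that along the span of the frame $(e_1,\ldots,e_r)$ the product $s$ has the nonzero leading monomial $\prod_{i=1}^r x_i^{m_i}$. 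The remaining facts, that products of highest weight vectors are highest weight vectors and that their weights add, are routine.
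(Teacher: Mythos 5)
Your proposal is correct and follows essentially the same route as the paper's own proof: Theorem~\ref{T: sjamaar} together with Lemma~\ref{L: integral} and Proposition~\ref{P: multfree} give the upper bound on which weights occur and with what multiplicity, and existence is obtained by multiplying $k=1$ highest weight vectors supplied by Theorem~\ref{thm:Decomposition} and Proposition~\ref{P: niceterm}. Your decomposition $\mu=\sum_{j}n_j\lambda_j$ is just a regrouping of the paper's $\boldsymbol{m}=\sum_{j=1}^{k}\boldsymbol{m}(j)$, and your explicit justification that the product section is nonzero (the section ring being an integral domain) merely makes precise a step the paper treats as routine.
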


\begin{proof}
By Theorem~\ref{T: sjamaar} and Lemma~\ref{L: integral}, only representations of the form 
$W_{(k\lambda, \boldsymbol{m})}$ can occur in 
$H^0(X, \mathscr{L})$, and by Proposition \ref{P: multfree} they 
can at most have multiplicity one. Thus, it suffices 
to prove that every such representation actually does occur. 
For this, we note that every $\boldsymbol{m} \in \N_0^r$ 
satisfying the condition 
$k \geq m_1 \geq \ldots \geq m_r \geq 0$ can 
be written uniquely as
$\boldsymbol{m}=\sum_{j=1}^k \boldsymbol{m}(j)$, 
with $\boldsymbol{m}(j)=(m_1(j),\ldots, m_r(j)) 
\in \N_0^r$ satisfying 
$1 \geq m_1(j) \geq \ldots \geq m_r(j) \geq 0$.
By Theorem~\ref{thm:Decomposition} and Proposition~\ref{P: niceterm}, for each such $\boldsymbol{m}(j)$ the 
irreducible $L$-representation $W_{(\lambda, \boldsymbol{m}(j))}$
occurs in $H^0(X, \mathscr{L})$. 
Let $s_j \in H^0(X, \mathscr{L})$ be an $L$-highest 
weight vector for the representation 
$W_{(\lambda, \boldsymbol{m}(j))}$. 
Then $s_1 \cdots s_k \in H^0(X, \mathscr{L}^k)$ is 
an $L$-highest weight vector of weight 
$k\lambda+\sum_{i=1}^rm_i\gamma_i$. Hence, 
the representation $W_{(k\lambda, \boldsymbol{m})}$
occurs in $H^0(X, \mathscr{L}^k)$.
\end{proof}

\section{The Okounkov body}
In this section we return to the Okounkov body 
$\Delta(\mathscr{L}, N^+_L, v)$. The main result 
is an identification of $\Delta(\mathscr{L}, N^+_L, v)$ 
with the moment polytope $\Pi_{\mathfrak{s}}$. We also 
prove that the semigroup $S(\mathscr{L}, N^+_L, v)$ 
is finitely generated. 

Let $\boldsymbol{m} \in \N_0^r$ satisfy
$1 \geq m_1 \geq \ldots \geq m_r \geq 0$, and let
$s_{\boldsymbol{m}} \in H^0(X,\mathscr{L})$ be an 
$L$-highest weight vector with highest weight 
$\lambda+m_1\gamma_1+\cdots+m_r\gamma_r$.

\begin{prop} \label{P: finsemi}
The semigroup $S(\mathscr{L}, N^+_L, v)$ is generated
by the elements $(1,v(s_{\boldsymbol{m}}))$ with 
$1 \geq m_1 \geq \ldots \geq m_r \geq 0$.
\end{prop}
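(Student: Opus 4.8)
The plan is to show that the semigroup ``sees'' only the valuations of the pure $L$-highest weight vectors, and that each such vector factors as a product of the level-one generators $s_{\boldsymbol{m}}$.

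First I would use Theorem~\ref{T: main}: the decomposition of $H^0(X,\mathscr{L}^k)$ under $L$ is multiplicity free, so the space $H^0(X,\mathscr{L}^k)^{N^+_L}$ of $\fn_L^+$-invariants is the direct sum of the one-dimensional highest weight spaces. Writing $s^{(k)}_{\boldsymbol{m}}$ for a highest weight vector of $W_{(k\lambda,\boldsymbol{m})}$, a general nonzero invariant is a linear combination $s=\sum_{\boldsymbol{m}}c_{\boldsymbol{m}}s^{(k)}_{\boldsymbol{m}}$ of highest weight vectors of the pairwise distinct weights $k\lambda+\sum_i m_i\gamma_i$. The key observation is that these vectors have \emph{disjoint monomial supports} in the trivialization: by Proposition~\ref{P: monomweight}, every monomial $z^a$ occurring in the trivialization of a weight vector of weight $\delta$ satisfies $\sum_j a_j\alpha_j=\delta-k\lambda$, so two weight vectors of distinct weights can share no monomial. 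Since $v$ is the minimum with respect to a total order, no cancellation occurs and $v(s)=\min\{v(s^{(k)}_{\boldsymbol{m}})\mid c_{\boldsymbol{m}}\neq0\}$, which is one of the $v(s^{(k)}_{\boldsymbol{m}})$. Hence
\begin{equation*}
S(\mathscr{L},N^+_L,v)=\Set{(k,v(s^{(k)}_{\boldsymbol{m}}))}{k\in\N,\ k\geq m_1\geq\cdots\geq m_r\geq0}.
\end{equation*}

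Next I would factor. As in the proof of Theorem~\ref{T: main}, every admissible $\boldsymbol{m}$ decomposes uniquely as $\boldsymbol{m}=\sum_{j=1}^k\boldsymbol{m}(j)$ with each $\boldsymbol{m}(j)$ satisfying $1\geq m_1(j)\geq\cdots\geq m_r(j)\geq0$. The product $s_{\boldsymbol{m}(1)}\cdots s_{\boldsymbol{m}(k)}$ is a nonzero $\fn_L^+$-invariant section of weight $k\lambda+\sum_i m_i\gamma_i$, hence, by multiplicity freeness, a scalar multiple of $s^{(k)}_{\boldsymbol{m}}$. Using the additivity $v(st)=v(s)+v(t)$, I then obtain
\begin{equation*}
(k,v(s^{(k)}_{\boldsymbol{m}}))=\sum_{j=1}^k(1,v(s_{\boldsymbol{m}(j)})),
\end{equation*}
which exhibits every element of $S(\mathscr{L},N^+_L,v)$ as a sum of the claimed generators $(1,v(s_{\boldsymbol{m}}))$. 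Since conversely each such generator lies in $S(\mathscr{L},N^+_L,v)$ and the semigroup is closed under addition, this proves the asserted generation.

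I expect the main obstacle to be the reduction in the first step, i.e.\ showing that the valuation of an arbitrary $\fn_L^+$-invariant equals that of one of the pure highest weight vectors $s^{(k)}_{\boldsymbol{m}}$; this rests entirely on the disjointness of monomial supports furnished by Proposition~\ref{P: monomweight}. Once that is established, the factorization of $\boldsymbol{m}$ and the additivity of $v$ render the rest essentially formal. The only point requiring care is that the product $s_{\boldsymbol{m}(1)}\cdots s_{\boldsymbol{m}(k)}$ is genuinely nonzero; this holds because its trivialization is the product of the trivializing polynomials of the factors, and the polynomial ring $\C[z_1,\ldots,z_n]$ is an integral domain.
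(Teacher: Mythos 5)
Your proof is correct and is essentially the paper's own argument: reduce to a single highest weight vector using Proposition~\ref{P: monomweight} (distinct weights force distinct, indeed disjoint, monomial supports, so $v(s)$ equals the minimal $v(s^{(k)}_{\boldsymbol{m}})$), then factor $\boldsymbol{m}=\sum_{j=1}^k\boldsymbol{m}(j)$ as in Theorem~\ref{T: main} and invoke multiplicity freeness to identify $s$ with the product $s_{\boldsymbol{m}(1)}\cdots s_{\boldsymbol{m}(k)}$ up to scalar, so that additivity of $v$ concludes. Your explicit remarks on non-cancellation and on the nonvanishing of the product only make explicit what the paper leaves implicit.
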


\begin{proof}
Let $s \in H^0(X,\mathscr{L}^k)$ be an 
$N^+_L$-invariant vector. Then $s$ can be 
written as a linear combination $s=\sum_i s_i$ of $L$-highest
weight vectors in $H^0(X,\mathscr{L}^k)$ corresponding 
to distinct highest weights. These weights are
distinct also as $\fh$-weights. Hence, by Proposition 
\ref{P: monomweight} $v(s_i) \neq v(s_j)$ for $i \neq j$.
It follows immediately from the definition of $v$ 
that $v(s)=v(s_i)$ for some $s_i$, namely 
the $s_i$ with smallest $v(s_i)$.
We can thus without loss of generality assume that 
$s$ is an $L$-highest weight vector. 
By Theorem \ref{T: main}, the weight of 
$s$ can then be written as $k\lambda+\sum_{i=1}^rm_i\gamma_i$
with $k \geq m_1 \geq \ldots \geq m_r \geq 0$. 
As in the proof of Theorem \ref{T: main}, 
$\boldsymbol{m}:=(m_1,\ldots, m_r)$ can be 
written uniquely as 
$\boldsymbol{m}=\sum_{j=1}^k \boldsymbol{m}(j)$, 
with $\boldsymbol{m}(j)=(m_1(j),\ldots, m_r(j)) 
\in \N_0^r$ satisfying 
$1 \geq m_1(j) \geq \ldots \geq m_r(j) \geq 0$. 
Let $s_j \in H^0(X, \mathscr{L})$ be an $L$-highest 
weight vector with weight 
$\lambda+\sum_{i=1}^r m_i(j)\gamma_i$.
Then $s_1 \cdots s_k \in H^0(X, \mathscr{L}^k)$ is 
an $L$-highest weight vector of weight 
$k\lambda+\sum_{i=1}^rm_i\gamma_i$. Since 
the decomposition under $L$ is multiplicity free, 
$s$ is a scalar multiple of $s_1\cdots s_k$. 
Hence, $v(s)=v(s_1)+\cdots+v(s_k)$. This finishes the 
proof.
\end{proof}

\begin{cor}
The Okounkov body $\Delta(\mathscr{L}, N^+_L, v)$ is 
the convex hull of the points
$(1,v(s_{\boldsymbol{m}}))$ with 
$1 \geq m_1 \geq \ldots \geq m_r \geq 0$.
\end{cor}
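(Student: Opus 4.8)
The plan is to read off the corollary directly from Proposition~\ref{P: finsemi}, which supplies a finite generating set for the semigroup. The single geometric fact I need is that slicing a finitely generated cone by the affine hyperplane in which its generators lie reproduces the convex hull of those generators.

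First I would recall that $C(\mathscr{L}, N^+_L, v)$ is by definition the closed convex cone generated by $S(\mathscr{L}, N^+_L, v)$, and that the convex cone generated by a semigroup coincides with the one generated by any chosen set of semigroup generators. By Proposition~\ref{P: finsemi} the latter is the finite set $\{(1, v(s_{\boldsymbol{m}})) \mid 1 \geq m_1 \geq \ldots \geq m_r \geq 0\}$. Hence $C(\mathscr{L}, N^+_L, v)$ equals the set of all $\sum_{\boldsymbol{m}} t_{\boldsymbol{m}}(1, v(s_{\boldsymbol{m}}))$ with $t_{\boldsymbol{m}} \geq 0$; since there are only finitely many generators this set is already closed (a finitely generated convex cone is polyhedral, hence closed), so passing to the closure in the definition loses nothing.

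Next I would exploit that every generator has first coordinate equal to $1$, so a point $\sum_{\boldsymbol{m}} t_{\boldsymbol{m}}(1, v(s_{\boldsymbol{m}}))$ of the cone has first coordinate $\sum_{\boldsymbol{m}} t_{\boldsymbol{m}}$. Intersecting with the hyperplane $\{1\} \times \R^n$ therefore imposes exactly the normalization $\sum_{\boldsymbol{m}} t_{\boldsymbol{m}} = 1$ on otherwise arbitrary non-negative coefficients, and the resulting slice is precisely the convex hull of the points $(1, v(s_{\boldsymbol{m}}))$, which is the assertion. I do not anticipate any real obstacle: the entire substance sits in Proposition~\ref{P: finsemi}, and what remains is the elementary remark that a cone over points lying in an affine hyperplane missing the origin cuts that hyperplane in their convex hull; the only point meriting a word of care is the closedness of the cone, and that is automatic from finite generation.
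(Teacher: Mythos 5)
Your proof is correct and is precisely the argument the paper leaves implicit: the corollary is stated without proof as an immediate consequence of Proposition~\ref{P: finsemi}, with exactly your reasoning (the cone generated by the semigroup equals the finitely generated, hence closed, cone on the generators $(1,v(s_{\boldsymbol{m}}))$, and slicing at first coordinate $1$ yields their convex hull) as the intended justification. Nothing is missing; your care about closedness via polyhedrality is the right point to flag.
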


Let $\mathcal{P} \subseteq \ft^*$ denote the weight 
lattice of $\ft$.
Define the \emph{moment semigroup} 
\begin{align*}
S(\mathscr{L}, N^+_L):=\{(k, i\xi) \in \N \times \mathcal{P} \mid \xi \;
\mbox{is the highest weight of some} 
\,W_{(k\lambda, \boldsymbol{m})}\}.
\end{align*}
We now construct a morphism of semigroups, 
$\Lambda$, from $S(\mathscr{L}, N^+_L, v)$ to $S(\mathscr{L}, N^+_L)$.
As already observed in the proof of Proposition~\ref{P: finsemi}, 
two weight vectors $s_1, s_2 \in H^0(X,\mathscr{L}^k)$ with 
distinct weights cannot have the same value under $v$ .
Now, let $(k,a) \in S(\mathscr{L}, N^+_L,v)$. 
Since the decomposition of $H^0(X,\mathscr{L}^k)$ is multiplicity
free there exists a unique (up to scalar multiples) weight vector 
$s \in H^0(X,\mathscr{L}^k)^{N^+_L}$ with $v(s)=a$. 
We therefore define
\begin{align*}
\Lambda: S(\mathscr{L}, N^+_L, v) \rightarrow \mathcal{P}
\end{align*}
by putting $\Lambda((k,a))=(k,i\xi)$, where $\xi$ is the weight 
of $s$.

\begin{rem}
The morphism $\Lambda$ was introduced by Okounkov in \cite{Ok}
in a slightly different (but more general) setting.
\end{rem}

\begin{prop} \label{P: isosemigroup}
The map $\Lambda: S(\mathscr{L}, N^+_L, v) \rightarrow 
S(\mathscr{L}, N^+_L)$ is an isomorphism of semigroups.
\end{prop}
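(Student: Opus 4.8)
The plan is to verify, in order, that $\Lambda$ is well-defined, a semigroup homomorphism, injective, and surjective. The two essential inputs are the multiplicity-freeness of the $L$-decomposition (Theorem~\ref{T: main}) and the fact that the value $v(s)$ of a weight vector determines its weight (Proposition~\ref{P: monomweight}). First I would describe $H^0(X,\mathscr{L}^k)^{N^+_L}$ explicitly: each irreducible $L$-type contributes a one-dimensional space of $N^+_L$-fixed vectors, namely its highest weight line, so by multiplicity-freeness $H^0(X,\mathscr{L}^k)^{N^+_L}=\bigoplus_\xi \C\,s_\xi$, a sum of highest weight lines indexed by the highest weights $\xi$ that occur. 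By Proposition~\ref{P: monomweight} the values $v(s_\xi)$ are pairwise distinct, so $\xi\mapsto v(s_\xi)$ is injective. Given $(k,a)\in S(\mathscr{L},N^+_L,v)$, I would write an $N^+_L$-invariant $s$ with $v(s)=a$ as $s=\sum_i s_{\xi_i}$; exactly as in the proof of Proposition~\ref{P: finsemi}, the distinctness of the $v(s_{\xi_i})$ forces $v(s)$ to equal the single smallest value $v(s_{\xi_{i_0}})$. Thus $a$ is attained by a unique highest weight vector and $\Lambda$ is well-defined. The same injectivity of $\xi\mapsto v(s_\xi)$ immediately yields injectivity of $\Lambda$, since $\Lambda(k,a_1)=\Lambda(k,a_2)=(k,i\xi)$ forces $a_1=v(s_\xi)=a_2$.

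For the homomorphism property I would take $(k_1,a_1),(k_2,a_2)$ with associated highest weight vectors $s_{\xi_1}\in H^0(X,\mathscr{L}^{k_1})$ and $s_{\xi_2}\in H^0(X,\mathscr{L}^{k_2})$ and examine the product $s_{\xi_1}s_{\xi_2}\in H^0(X,\mathscr{L}^{k_1+k_2})$. It is $N^+_L$-invariant and of weight $\xi_1+\xi_2$, hence a highest weight vector; it trivializes to the product of the two nonzero trivializing polynomials and is therefore nonzero, so the valuation property gives $v(s_{\xi_1}s_{\xi_2})=a_1+a_2$. By uniqueness, $s_{\xi_1}s_{\xi_2}$ is then the highest weight vector attached to $a_1+a_2$, whence $(k_1+k_2,a_1+a_2)\in S(\mathscr{L},N^+_L,v)$ and
\[
\Lambda\big((k_1,a_1)+(k_2,a_2)\big)=(k_1+k_2,\,i(\xi_1+\xi_2))=\Lambda(k_1,a_1)+\Lambda(k_2,a_2).
\]
This computation simultaneously shows that $\xi_1+\xi_2$ occurs as a highest weight in $H^0(X,\mathscr{L}^{k_1+k_2})$, confirming that the image of $\Lambda$ lies in $S(\mathscr{L},N^+_L)$. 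Surjectivity is then immediate: any $(k,i\xi)\in S(\mathscr{L},N^+_L)$ is the highest weight of some $W_{(k\lambda,\boldsymbol m)}$ occurring in $H^0(X,\mathscr{L}^k)$, and setting $a:=v(s_\xi)$ gives $\Lambda(k,a)=(k,i\xi)$.

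I expect the only delicate point to be the well-definedness: one must confirm that the value $a$ is realized by a single highest weight vector rather than merely by some linear combination of them. This is precisely where Proposition~\ref{P: monomweight}, which forces distinct weights to have distinct $v$-values, combined with multiplicity-freeness, is indispensable. Everything else is formal once one knows that the product of two highest weight vectors is again a nonzero highest weight vector, which the valuation property supplies.
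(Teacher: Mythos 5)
Your proof is correct and follows essentially the same route as the paper: well-definedness and injectivity come from multiplicity-freeness (Theorem~\ref{T: main}) combined with the fact that weight vectors of distinct weights have distinct $v$-values (Proposition~\ref{P: monomweight}), and surjectivity from evaluating $v$ on the unique highest weight vector of any occurring highest weight. The only difference is that you explicitly verify the semigroup-homomorphism property via products of highest weight vectors (which the paper leaves implicit when it constructs $\Lambda$ and simply calls it a morphism); this is a worthwhile addition but not a genuinely different argument.
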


\begin{proof}
We first prove that $\Lambda$ is injective. 
For this, assume that for some $a \neq b$ we have $\Lambda((k,a))=\Lambda((k,b))$. If $s, t \in H^0(X, \mathscr{L}^k)^{N^+_L}$ are 
highest weight vectors with $v(s)=a$ and $v(t)=b$, respectively, 
then $s$ and $t$ are linearly independent highest weight vectors 
of the same weight. This would, however, contradict the 
multiplicity-freeness of the decomposition of $H^0(X, \mathscr{L}^k)$
under $L$. 

For the surjectivity, we note that, again by multiplicity-freeness, 
every highest weight $\xi$ of some $W_{(k\lambda, \boldsymbol{m})}$ 
is the weight of a unique (up to scalar multiples) 
highest weight vector $s \in H^0(X, \mathscr{L}^k)^{N^+_L}$.
Then $\Lambda((k, v(s)))=i\xi$.
\end{proof}
Let $E(\mathscr{L}, N^+_L, v) \subseteq \R \times \R^n$ be 
the $\R$-linear subspace generated by $S(\mathscr{L}, N^+_L, v)$, 
and let $E(\mathscr{L}, N^+_L) \subseteq \R \times \ft^*$ be 
the $\R$-linear subspace generated by $S(\mathscr{L}, N^+_L)$.
We extend $\Lambda$ to a unique linear map
$E(\mathscr{L}, N^+_L, v)\rightarrow E(\mathscr{L}, N^+_L) $, 
and we let $\Lambda$ also denote this extension.

Let $C(\mathscr{L}, N^+_L) \subseteq E(\mathscr{L}, N^+_L)$ 
be the closed convex cone generated by $S(\mathscr{L}, N^+_L)$, and put
\begin{equation*}
\Delta(\mathscr{L}, N^+_L):=C(\mathscr{L}, N^+_L) \cap (\{1\} \times \ft^*).
\end{equation*}
Clearly, we can identify $\Delta(\mathscr{L}, N^+_L)$ 
with $\Pi_{\mathfrak{s}}$ by
\begin{equation*}
\Delta(\mathscr{L}, N^+_L)=\{(1,x) \mid x \in \Pi_{\mathfrak{s}}\}.
\end{equation*}
Using this identification, Proposition \ref{P: isosemigroup} 
readily yields the following identification of the Okounkov 
body $\Delta(\mathscr{L}, N^+_L, v)$ with the moment polytope
$\Pi_{\mathfrak{s}}$.
\begin{thm}
The map 
$\Lambda: E(\mathscr{L}, N^+_L, v)\rightarrow E(\mathscr{L}, N^+_L)$
restricts to a bijection
\begin{equation*}
\Delta(\mathscr{L}, N^+_L, v) \rightarrow \Pi_{\mathfrak{s}}.
\end{equation*}

\begin{rem}
The problem of constructing polyhedral Okounkov bodies has 
been addressed both in the setting of group actions, such in 
\cite{Ok}, as well as in the case, developed in \cite{LM09}, when 
the semigroup is defined by the values of all sections.
Few positive results in this direction are known, however. 
It is known to work for torus-equivariant line bundles over 
toric varieties (cf. \cite{LM09}). As examples in the setting of 
homogeneous spaces under a reductive group we would like to 
mention \cite{Ok2} and, more recently, \cite{kcrystal}.
\end{rem}

\end{thm}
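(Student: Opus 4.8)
The plan is to derive the statement from Proposition~\ref{P: isosemigroup}, which already provides a semigroup isomorphism $\Lambda: S(\mathscr{L}, N^+_L, v) \to S(\mathscr{L}, N^+_L)$, together with the single structural feature that $\Lambda$ preserves the grading by $k$. Since $E(\mathscr{L}, N^+_L, v)$ and $E(\mathscr{L}, N^+_L)$ are by definition the $\R$-linear spans of these two semigroups, the isomorphism extends to a linear isomorphism of the ambient vector spaces, with inverse the linear extension of $\Lambda^{-1}$; being linear between finite-dimensional spaces, it is in particular a homeomorphism. This last fact is what will let $\Lambda$ commute with both the passage to the closed convex cone and the intersection with an affine hyperplane.

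First I would record that $\Lambda$ respects the first coordinate: by its very definition $\Lambda((k,a))=(k, i\xi)$, so the $\N$-component is preserved on generators and hence, by linearity, everywhere. Consequently $\Lambda$ carries the affine slice $\{1\}\times\R^n$ onto the affine slice $\{1\}\times\ft^*$. Next I would use that a linear isomorphism maps the convex cone generated by a set onto the convex cone generated by its image; applied to $S(\mathscr{L}, N^+_L, v)$ and combined with the equality $\Lambda(S(\mathscr{L}, N^+_L, v)) = S(\mathscr{L}, N^+_L)$, and then with commutation with closure, this gives $\Lambda(C(\mathscr{L}, N^+_L, v)) = C(\mathscr{L}, N^+_L)$. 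Intersecting with the grading-one slice and invoking the identification $\Delta(\mathscr{L}, N^+_L)=\{(1,x)\mid x\in\Pi_{\mathfrak{s}}\}$ then yields
\[
\Lambda\big(\Delta(\mathscr{L}, N^+_L, v)\big)
= C(\mathscr{L}, N^+_L)\cap(\{1\}\times\ft^*)
= \Delta(\mathscr{L}, N^+_L)
\cong \Pi_{\mathfrak{s}},
\]
and since $\Lambda$ is injective the restriction is a bijection onto $\Pi_{\mathfrak{s}}$.

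I do not expect a genuine obstacle here: once Proposition~\ref{P: isosemigroup} is available the argument is essentially formal. The only point requiring a little care is the interplay between the grading and the slicing --- it is precisely because $\Lambda$ fixes the first coordinate that intersection with $\{1\}\times(\cdot)$ may be pulled through $\Lambda$ --- together with the observation that the linear isomorphism $\Lambda$, being a homeomorphism, respects both closure and the cone construction. All of the real content, namely multiplicity-freeness and the explicit determination of the highest weights, has already been absorbed into Proposition~\ref{P: isosemigroup} and the preceding identification of $\Delta(\mathscr{L}, N^+_L)$ with $\Pi_{\mathfrak{s}}$.
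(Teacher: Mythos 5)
Your proposal is correct and follows exactly the route the paper intends: the paper gives no separate proof, stating only that Proposition~\ref{P: isosemigroup} together with the identification $\Delta(\mathscr{L}, N^+_L)=\{(1,x)\mid x\in\Pi_{\mathfrak{s}}\}$ ``readily yields'' the theorem, and your argument simply makes that formal derivation explicit. The details you supply --- preservation of the $k$-grading on generators and hence on all of $E(\mathscr{L}, N^+_L, v)$, and the fact that the linear isomorphism commutes with generating the cone, taking closures, and slicing at $k=1$ --- are precisely the routine verifications the paper leaves to the reader.
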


\end{document}